\title{\Large New global Carleman estimates and null controllability for a stochastic Cahn-Hilliard type equation}
\author{\sc\normalsize Sen Zhang$^\dagger$, Hang Gao$^\S$, and  Ganghua Yuan$^\ddagger$}
\date{}
\newtheorem{theorem}{Theorem}[section]
\newtheorem{lemma}{Lemma}[section]
\newtheorem{remark}{Remark}[section]
\numberwithin{equation}{section}
\newcommand{\dd}[0]{\mathrm{d}}
\newcommand{\EE}[0]{\mathbb{E}}
\newcommand{\dxt}[0]{\dd x\dd t}
\newcommand{\dt}[0]{\dd t}
\newcommand{\dx}[0]{\dd x}
\newcommand{\intt}[0]{\int_0^T}
\newcommand{\barr}[1]{{\overline {#1}}}
\titleformat*{\section}{\normalsize\bfseries\rmfamily}
\titleformat*{\subsection}{\normalsize\bfseries\rmfamily}
\begin{document}
\maketitle

\begin{abstract}
In this paper, we study the null controllability for a stochastic semilinear Cahn-Hilliard type equation, whose semilinear term contains first and second order derivatives of solutions. To start with, an improved global Carleman estimate for linear backward stochastic fourth order parabolic equations with $L^2$-valued source terms is derived, which is based on a new fundamental identity for a stochastic fourth order parabolic operator. Based on it, we establish a new global Carleman estimate for linear backward stochastic fourth order parabolic equations with $H^{-2}$-valued source terms, which, together with a fixed point argument, derive the desired null controllability for the stochastic Cahn-Hilliard type equation.
\end{abstract}
\footnote{
{\it Key  Words: null controllability,  stochastic Cahn-Hilliard equation,   Carleman estimate}

\quad {\it MSC: 35Q56, 93B05, 60H15}

\quad $^\dagger$School of Mathematics and Statistics, Beijing Institute of Technology, Beijing 100081, P.R. China (7520230216@bit.edu.cn).

\quad $^\S$KLAS, School of Mathematics \& Statistics,  Northeast Normal University, Changchun Jilin 130024, P.R. China (hangg@nenu.edu.cn)

\quad $^\ddagger$KLAS, School of Mathematics \& Statistics,  Northeast Normal University, Changchun Jilin 130024, P.R. China (yuangh925@nenu.edu.cn).
}

\section{Introduction and Main Results}
\subsection{Motivation}
Cahn-Hilliard equation is an important nonlinear mathematical physics equation, which describes the process of phase separation, by which the two components of a binary fluid spontaneously separate and form domains pure in each component. It was proposed initially by Cahn \cite{Cahn1961On, Cahn1958Free} and Hilliard \cite{Hilliard1958Spinodal} in order to describe isothermal phase separation processes in binary alloys.  In the past years, the Cahn-Hilliard equation and its variants not only play an important role in materials science \cite{Erlebacher1997Long}, but also have been successfully applied in different other fields such as population dynamics \cite{Cohen1981A}, thin films theory \cite{Oron1997Long}, chemistry \cite{Verdasca1995Chemically}, image processing \cite{Bertozzi2007Inpainting} and tumor growth simulation \cite{Hilhorst2015Formal}.   For more physical background and applications, we refer to \cite{Kim2016Basic, Miranville2019The} and the references therein.
We can also refer to \cite{Elliott1986On,Novick1989Non,Novick1990On} for general analysis and discussion of the deterministic Cahn-Hilliard equation.

In practice, the deterministic Cahn-Hilliard equation presents some drawbacks. In fact, due to unpredictable movements at the atomistic level, which may be caused, for example, by temperature oscillations, magnetic effects, or configurational interactions, the phase-separation process inevitably presents some disruptions, acting at a microscopic level. Therefore, the most natural way to overcome this problem is to replace deterministic functions with stochastic processes as mathematical descriptions, and the resulting model becomes a stochastic Cahn-Hilliard equation. This was proposed by Cook for Wiener noise and produced the well-known Cahn-Hilliard-Cook stochastic model \cite{Cook1970Brownian}.
Subsequently, the stochastic version of the model has been repeatedly confirmed to be the only one that can truly describe the phase-separation in the alloy \cite{Binder1981Kinetics,Pego1989Front}.
Since then, stochastic Cahn-Hilliard equations have been extensively studied, such as the well-posedness problems (see e.g., \cite{Antonopoulou2016Existence,Cardon-Weber2001Cahn-Hilliard} and references therein) and numerical simulation problems (see e.g., \cite{Cui2021Strong,Furihata2018Strong} and references therein).
Recently, many efforts have been devoted to studying the control problems of stochastic Cahn-Hilliard type equations and we refer to \cite{Scarpa2019Optimal} for some known results.
However, as far as we know, little is known about the controllability of stochastic Cahn-Hilliard equations. Motivated by this, we focus on this point in this paper.
\subsection{Statement of main results}
First, we give some notations and basic assumptions. Let $T>0$, $G:=(0,1)$, $Q:=(0,T)\times G$. Assume $G_{0}$ to be a given nonempty open subset of $G$ and denote by $\chi_{G_{0}}$ the characteristic function of the set $G_{0}$. We use $C$ to denote a generic positive constant depending only on $G_{0}$ and $G$,  which may change from line to line.

Let $(\Omega,\mathcal{F},\mathbf{F},\mathbb{P})$ with $\mathbf{F}=\{\mathcal{F}_t\}_{t\geq0}$ be a complete filtered probability space on which a one-dimensional standard Brownian motion $\{B(t)\}_{t\geq0}$ is defined and $\mathbf{F}$ is the natural filtration generated by $B(\cdot)$, augmented by all the $\mathbb{P}$ null sets in $\mathcal{F}$. Let $H$ be a Banach space.
Denote by $L^2_{\mathcal{F}_t}(\Omega;H)$ the space of all $\mathcal{F}_t$-measurable random variables $\zeta$ such that $\EE |\zeta|^2_{H}<\infty$; denote by $L^2_\mathbb{F}(0,T;H)$ the space consisting of all $H$-valued $\mathbf{F}$-adapted processes $X(\cdot)$ such that $\mathbb{E}(|X(\cdot)|^2_{L^2(0,T;H)})<\infty$; by $L^\infty_\mathbb{F}(0,T;H)$ the space consisting of all $H$-valued $\mathbf{F}$-adapted bounded processes $X(\cdot)$; and by $L^2_\mathbb{F}(\Omega;C([0,T];H))$ the space consisting of all $H$-valued $\mathbf{F}$-adapted continuous processes $X(\cdot)$ such that $\mathbb{E}(|X(\cdot)|^2_{C(0,T;H)})<\infty$. Similarly, one can define $L^2_\mathbb{F}(\Omega;C^k([0,T];H))$ for any positive integer $k$. All of these spaces are Banach spaces with canonical norms.

This paper is concerned with the following stochastic Cahn-Hilliard type equation
\begin{equation}\label{equationy1}
	\left\{
		\begin{aligned}
	&\dd y+y_{xxxx}\dt=[f(\omega,t,x,y,y_{x},y_{xx})+\chi_{G_0}u]\dt\\
	&\quad\quad\quad\quad\quad\quad\quad\quad\quad\quad\quad +[g(\omega,t,x,y,y_{x},y_{xx})+U]\dd B(t) &\textup{in}\ &Q,\\
    &y(t,0)=y(t,1)=0 &\textup{in}\ &(0,T),\\
    &y_{x}(t,0)=y_{x}(t,1)=0 &\textup{in}\ &(0,T),\\
    &y(0)=y_0 &\textup{in}\ &G,
            \end{aligned}
    \right.
\end{equation}
where $(u,U)$ is the control variable, $y$ is the state variable, and $y_{0}$ is a given initial value.
For simplicity, we use the notation $y_{x}:=\frac{\partial y}{\partial x}$. In a similar manner, we use the notation $w_{x}$, $r_{x}$, $h_{x}$, etc. for the partial derivatives of $w$, $r$, and $h$ with respect to $x$.

We assume that the functions $f$ and $g$ satisfy the following assumptions:
\begin{equation}\nonumber
\begin{split}
{\bf (A_{1}).}\ & \mbox{For each}\ y\in H_{0}^2(G),\ f(\cdot,\cdot,\cdot,y,y_{x},y_{xx})\ \mbox{and}\ g(\cdot,\cdot,\cdot,y,y_{x},y_{xx})\  \mbox{are}\ \mathbf{F}\mbox{-adapted and}\
\\
& L^2\mbox{-valued stochastic processes}.
\\
{\bf (A_{2}).}\ &\forall\ (\omega,t,x)\in \Omega\times Q,\ f(\omega,t,x,0,0,0)=0.
\\
{\bf (A_{3}).}\ &\exists \ \kappa>0,\ \forall\ (\omega,t,x,a_{1},b_{1},s_{1},a_{2},b_{2},s_{2})\in \Omega\times Q\times \mathbb{R}^6,
\\
&|f(\omega,t,x,a_{1},b_{1},s_{1})-f(\omega,t,x,a_{2},b_{2},s_{2})|\leq \kappa (|a_{1}-a_{2}|+|b_{1}-b_{2}|+|s_{1}-s_{2}|).
\\
{\bf (A_{4}).}\ &\exists \ \kappa_{1}>0,\ \forall\ (\omega,t,x,a_{1},b_{1},s_{1},a_{2},b_{2},s_{2})\in \Omega\times Q\times \mathbb{R}^6,
\\
&|g(\omega,t,x,a_{1},b_{1},s_{1})-g(\omega,t,x,a_{2},b_{2},s_{2})|\leq \kappa_{1} (|a_{1}-a_{2}|+|b_{1}-b_{2}|+|s_{1}-s_{2}|).
\end{split}
\end{equation}

\begin{remark}
Under the above assumptions, by taking $y_{0}\in L^2_{\mathcal{F}_{0}}(\Omega;L^{2}(G))$ and $(u,U)\in L^2_{\mathbb{F}}(0,T;L^2(G_{0}))\times L^2_{\mathbb{F}}(0,T;L^2(G))$, one can easily show that (\ref{equationy1}) admits a unique solution
\begin{equation}\nonumber
y\in \mathcal{H}_{T}	
:=
L^2_{\mathbb{F}}(\Omega; C([0,T];L^2(G))) \cap L^2_{\mathbb{F}}(0,T;H_0^2(G))
\end{equation}
by the standard fixed point argument (see e.g., \cite{Lu2021Mathematical}). We omit the proof here.
\end{remark}

The main purpose of this paper is to study the null controllability of the system (\ref{equationy1}).
System (\ref{equationy1}) is said to be {\it globally null controllable} if for any initial datum $y_{0}\in L^2_{\mathcal{F}_{0}}(\Omega;L^2(G))$, there exists a control $(u,U)\in L^2_{\mathbb{F}}(0,T;L^2(G_{0}))\times L^2_{\mathbb{F}}(0,T;L^2(G))$ such that the corresponding solution $y$ of (\ref{equationy1})  satisfies $y(T,\cdot)=0$ in $G$, $\mathbb{P}$-a.s.

The controllability of deterministic nonlinear partial differential equations (PDEs) has been studied by many authors and the results are relatively rich, such as
nonlinear parabolic equation (see e.g.,  \cite{Barbu2000Exact,Doubova2002On,Emanuilov1995Controllability,Fabre1995Approximate,Fernandez-Cara1995Null,Fernandez-Cara2000Null,Fursikov1996Controllability,LeBalch2020Global}),
nonlinear fourth order  parabolic equation (see e.g., \cite{Kassab2020Null,Zhou2012Observability}),
Ginzburg-Landau equation (see e.g., \cite{Fu2009Null,Rosier2009Null}),
Kuramoto-Sivashinsky equation (see e.g., \cite{Cerpa2010Null,Cerpa2011Local}) and
Cahn-Hilliard equation (see e.g., \cite{Gao2016A}).
It can be seen from these results that the author usually uses the following strategies to study controllability problems.
First,  linearize the nonlinear system and study the controllability of the linearized system.
Then the controllability problem of nonlinear systems is solved by using appropriate fixed point methods, usually Schauder or Kakutani fixed point methods.
At this point, the property of compactness plays a crucial role, for example, the compact embedding result of the Aubin-Lions lemma is used commonly.

In recent years, the controllability of stochastic PDEs (SPDEs) has received much attention.
However, being compared with the results for deterministic PDEs, little has been known in the stochastic setting.
In this respect, we refer to \cite{Barbu2003Carleman,Fu2017A,Fu2017Controllability,Gao2015Observability,Liao2024Exact,Lu2011Some,Lu2013Exact,Lu2022Null,Tang2009Null,Yu2022Carleman} for some known results.
In \cite{Barbu2003Carleman,Lu2011Some,Tang2009Null}, the null controllability and approximate controllability of the stochastic parabolic equations were obtained.
Reference \cite{Fu2017A} and \cite{Fu2017Controllability} showed the null controllability of the stochastic complex  Ginzburg-Landau equation.
In \cite{Lu2013Exact}, the exact controllability for stochastic Schr\"odinger equation was established.
In \cite{Liao2024Exact} and \cite{Yu2022Carleman}, the exact controllability of a refined stochastic wave equation and a refined stochastic beam equation were discussed respectively.
References \cite{Gao2015Observability} and \cite{Lu2022Null} showed the null controllability of the stochastic fourth order parabolic type equations.
It should be noted that the above works mainly focus on the controllability of linear SPDEs systems, while there is little literature on the controllability of nonlinear stochastic systems.
This is due to the lack of compactness embedding for the function spaces related to SPDEs (see \cite[Remark 2.5]{Tang2009Null} or \cite{Lu2021Mathematical}), which makes some classical strategies in deterministic setting (see e.g., \cite{Fernandez-Cara2000Null}) fail to establish null controllability for semilinear systems at the stochastic level.
To our best knowledge, the known results in this direction seem to be only \cite{Hernandez-Santamaria2022Statistical,Hernandez-Santamaria2023Global,Zhang2024New}, in which the author established the null controllability of semilinear stochastic parabolic equations.
However, as far as we know, nothing is known about the controllability of nonlinear stochastic high-order parabolic equations, especially for the stochastic Cahn-Hilliard equation with the nonlinear term including state and  derivative of the state,
This kind of nonlinear term will make more difficult to establish the controllability.

To overcome the lack of compactness mentioned above, we borrow some ideas from \cite{Hernandez-Santamaria2023Global,Liu2014Global} when proving the null controllability of (\ref{equationy1}).  First, we obtain a new global Carleman estimate for a backward stochastic fourth order parabolic equation with a $L^2$-valued source by introducing a suitable singular weighted function.
Next, by combining the $L^2$-Carleman estimate with the duality argument and Hilbert Uniqueness Method (HUM) introduced in \cite{Lions1988Exact}, we get a new global Carleman estimate for backward stochastic fourth order parabolic equation with a $H^{-2}$-valued source.
Using the above $H^{-2}$-valued Carleman estimate, we will establish the null controllability for a linear system with a source $\phi$ of the form
\begin{equation}\nonumber
	\left\{
		\begin{aligned}
	&\dd y+y_{xxxx}\dt=(\phi+\chi_{G_0}u)\dt+U\dd B(t) &\textup{in}\ &Q,\\
    &y(t,0)=y(t,1)=0 &\textup{in}\ &(0,T),\\
    &y_{x}(t,0)=y_{x}(t,1)=0 &\textup{in}\ &(0,T),\\
    &y(0)=y_0 &\textup{in}\ &G,
            \end{aligned}
    \right.
\end{equation}
where the source $\phi$ is in some suitable functional space.
At last, we need to prove that a nonlinear mapping $\phi\rightarrow f(\omega,t,x,y,y_{x},y_{xx})$ is strictly contractive in a suitable functional space. Through a Banach fixed point method which does not rely on any compactness argument, we can obtain the null controllability for (\ref{equationy1}).
The main novelty here is that we construct a new weight function in proving the Carleman estimates and use a Banach fixed point method to prove the null controllability for the system (\ref{equationy1}).

The main controllability result in this paper is as follows.
\begin{theorem}\label{control}
	Let assumptions $(A_{1})$, $(A_{2})$ and $(A_{3})$ be satisfied. Then system $(\ref{equationy1})$ is globally null controllable.
\end{theorem}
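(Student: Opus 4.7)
The plan is to follow the three-step strategy outlined in the introduction: linearize $(\ref{equationy1})$ by ``freezing'' the nonlinearity as an exogenous source $\phi$, solve the resulting linear null-control problem by duality from the $H^{-2}$-valued Carleman estimate announced in the paper, and then close the nonlinear problem by a Banach fixed-point argument that exploits the Lipschitz hypothesis $(A_3)$. The distinguishing feature from the deterministic case is that no compactness embedding for $\mathbf{F}$-adapted processes is available, so a quantitative contraction, rather than a Schauder/Kakutani scheme, must be used.

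First, I would introduce a pair of weighted Hilbert spaces adapted to the Carleman weights: a ``source space'' $\mathcal{X}$ of $\mathbf{F}$-adapted $L^{2}(G)$-valued processes equipped with a norm of the form $\|\phi\|_{\mathcal{X}}^{2}=\EE\int_{Q}\theta^{-2}|\phi|^{2}\dxt$ (with $\theta$ a suitable Carleman weight that blows up at $t=0$ and $t=T$), and a ``state space'' $\mathcal{Y}\subset\mathcal{H}_{T}$ whose norm controls weighted $L^{2}$-norms of $y$, $y_{x}$ and $y_{xx}$. For each $\phi\in\mathcal{X}$, applying HUM together with the $H^{-2}$-valued Carleman/observability inequality for the backward adjoint
\begin{equation}\nonumber
\left\{
\begin{aligned}
&-\dd z+z_{xxxx}\dt=F\dt+Z\dd B(t) & \textup{in}\ & Q,\\
&z(t,0)=z(t,1)=0,\ z_{x}(t,0)=z_{x}(t,1)=0 & \textup{in}\ & (0,T),\\
&z(T)=z_{T} & \textup{in}\ & G,
\end{aligned}\right.
\end{equation}
should produce a bounded linear solution operator $\mathcal{S}:(y_{0},\phi)\mapsto(y,u,U)$ with $y(T,\cdot)=0$ $\mathbb{P}$-a.s.\ together with a quantitative control
\[
\|y\|_{\mathcal{Y}}^{2}+\EE\!\int_{0}^{T}\!\!\int_{G_{0}}\!|u|^{2}\dxt+\EE\!\int_{Q}\!|U|^{2}\dxt\le C\bigl(\|y_{0}\|_{L^{2}_{\mathcal{F}_{0}}(\Omega;L^{2}(G))}^{2}+\|\phi\|_{\mathcal{X}}^{2}\bigr).
\]

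Next, I would define the nonlinear map $\Lambda:\mathcal{X}\to\mathcal{X}$ by $\Lambda(\phi)(\omega,t,x):=f(\omega,t,x,y,y_{x},y_{xx})$, where $y$ is the state produced by $\mathcal{S}(y_{0},\phi)$. Thanks to $(A_{2})$ and $(A_{3})$, the weighted process $\theta^{-1}\Lambda(\phi)$ belongs to $L^{2}_{\mathbb{F}}(\Omega;L^{2}(Q))$, so $\Lambda$ does map $\mathcal{X}$ into itself. Combining $(A_{3})$ with the linearity of $\mathcal{S}(0,\cdot)$ and the linear estimate above,
\[
\|\Lambda(\phi_{1})-\Lambda(\phi_{2})\|_{\mathcal{X}}\le C_{s}\kappa\,\|\phi_{1}-\phi_{2}\|_{\mathcal{X}},
\]
where the constant $C_{s}$ tracks the dependence on the large Carleman parameter $s$ and can be made strictly less than $1/\kappa$ by choosing $s$ large and the weights on $\mathcal{X}$ and $\mathcal{Y}$ consistently. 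Banach's theorem then yields a fixed point $\phi^{\star}=f(\cdot,y^{\star},y^{\star}_{x},y^{\star}_{xx})$, and the associated control $(u^{\star},U^{\star})$ extracted from $\mathcal{S}(y_{0},\phi^{\star})$ drives the corresponding solution of $(\ref{equationy1})$ to $0$ at time $T$.

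The main obstacle is harmonizing the source and state spaces so that the fixed-point map actually contracts. Because $f$ involves $y_{xx}$, the output $\Lambda(\phi)$ is only at the $L^{2}$-level, so $\mathcal{X}$ must be built on $L^{2}$; at the same time, the linear estimate has to bound the weighted $L^{2}$-norm of $y_{xx}$ by the weighted $L^{2}$-norm of $\phi$ with a constant controllable by $s$. This is precisely why the paper's $H^{-2}$-valued Carleman estimate is needed: a classical $L^{2}$-source observability would lose two derivatives on $y$ and destroy the contraction. Carefully tracking the $s$-dependence of the observability constants, and gaining smallness through powers of $s$ in the weight comparisons, is thus the delicate technical heart of the argument, and it is what replaces the compactness step that would be used in the deterministic theory.
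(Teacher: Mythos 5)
Your overall strategy coincides with the paper's: freeze the nonlinearity as an exogenous source $\phi$, solve the linear null-control problem with a quantitative weighted estimate on $(y,y_x,y_{xx},u,U)$ obtained from the $H^{-2}$-valued Carleman estimate (Theorem \ref{carle1} feeding into Theorem \ref{control1}), and close with a Banach fixed point on the source space, the contraction constant being beaten down by taking the Carleman parameters $\lambda,\mu$ large. Your explanation of why the $H^{-2}$-valued estimate is indispensable (to recover a weighted bound on $y_{xx}$ compatible with a nonlinearity depending on second derivatives) matches the paper's use of a penalized adjoint system whose right-hand side $\Xi$ is only $H^{-2}$-valued.

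Two points need fixing. First, your parenthetical description of the weight --- ``a suitable Carleman weight that blows up at $t=0$ and $t=T$'' --- is precisely what the paper must avoid, and taken literally it would break your own argument. The function $\gamma$ in (\ref{gamma}) is deliberately kept bounded on $[0,T/2)$, so $\theta^{-2}(0,\cdot)$ is finite; the singularity occurs only as $t\to T^-$. This matters because your map $\Lambda(\phi)=f(\cdot,y,y_x,y_{xx})$ must land back in the source space whose norm is $\EE\int_{Q}\theta^{-2}\lambda^{-7}\mu^{-8}\xi^{-7}|\phi|^2\dxt$: since $y(0)=y_0$ is an arbitrary nonzero initial datum, $f(\cdot,y,y_x,y_{xx})$ does not vanish near $t=0$, and a classical weight with $\theta^{-2}\to\infty$ as $t\to 0^+$ would give $\|\Lambda(\phi)\|_{\mathcal{X}}=\infty$, so $\Lambda$ would not map $\mathcal{X}$ into itself. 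The non-degeneracy at $t=0$ is also what produces the observation term $\EE\|\theta(0,\cdot)r(0,\cdot)\|^2_{L^2(G)}$ on the left of (\ref{carest2}) and (\ref{carest1}), which in turn yields the $\|y_0\|^2$ term in your linear estimate. Second, your fixed point only treats the drift nonlinearity $f$, i.e.\ it establishes null controllability of (\ref{equationy2}); to conclude for (\ref{equationy1}) you still need the (easy but necessary) reduction of Remark \ref{remark1}: since $U$ acts on all of $G$, replace it by $U^{*}=U-g(\omega,t,x,y,y_x,y_{xx})$, which is admissible by $(A_1)$ and leaves the trajectory, hence the target condition $y(T)=0$, unchanged --- this is also why $(A_4)$ can be dropped from the hypotheses.
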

\begin{remark}\label{remark1}
We claim that the null controllability of equation (\ref{equationy1}) can be reduced to the null controllability of
\begin{equation}\label{equationy2}
	\left\{
		\begin{aligned}
	&\dd y+y_{xxxx}\dt=[f(\omega,t,x,y,y_{x},y_{xx})+\chi_{G_0}u]\dt+U\dd B(t) &\textup{in}\ &Q,\\
    &y(t,0)=y(t,1)=0 &\textup{in}\ &(0,T),\\
    &y_{x}(t,0)=y_{x}(t,1)=0 &\textup{in}\ &(0,T),\\
    &y(0)=y_0 &\textup{in}\ &G.
            \end{aligned}
    \right.	
\end{equation}
Indeed, assume that one can find two controls $u\in L^2_{\mathbb{F}}(0,T;L^2(G_{0}))$ and $U\in  L^2_{\mathbb{F}}(0,T;L^2(G))$ such that the corresponding solution $y$ of (\ref{equationy2}) satisfies $y(T,\cdot)=0$ in $G$, $\mathbb{P}$-a.s. Since the control $U$ is distributed in the whole domain $G$, the state $y$ still satisfies equation (\ref{equationy1}) with the controls $u^{*}=u$ and
\begin{equation}\nonumber
	U^{*}=U-g(\omega,t,x,y,y_{x},y_{xx})\in L^2_{\mathbb{F}}(0,T;L^2(G)),
\end{equation}
which is well defined by $(A_{1})$. Moreover, we still have the
controllability property i.e. $y(T,\cdot)=0$ in $G$, $\mathbb{P}$-a.s. This is why we can drop the Lipschitz condition $(A_{4})$ on $g$ in Theorem \ref{control}.
\end{remark}
\begin{remark}
From the proof of Theorem \ref{control}, it is not difficult to see that null controllability is still valid when the boundary condition in the system (\ref{equationy1}) is replaced by
\begin{equation}\label{102}
\left\{
		\begin{aligned}
    &y(t,0)=y(t,1)=0 &\textup{in}\ &(0,T),\\
    &y_{xx}(t,0)=y_{xx}(t,1)=0 &\textup{in}\ &(0,T),
         \end{aligned}
    \right.
\end{equation}
the system is still null controllability.
Indeed, we can get the  Carleman estimate for the backward system with boundary (\ref{102}), this is a key point for proving the null controllability (see Remark \ref{remark01} and Remark \ref{remark02} for an explanation).
\end{remark}
\begin{remark}
The controllability of the high-dimensional Cahn-Hilliard type equation may be able to obtained by the method in this paper, where the key point is to get the Carleman estimates similar in Theorem \ref{carle2} and Theorem \ref{carle1}. However, it is more difficult to derive the Carleman estimates in the high-dimensional case (see e.g., \cite{Lu2022Null,Zhang2024Unique}), and we will consider it in the future.
\end{remark}
\begin{remark}
Theorem \ref{control} requires an extra control $U\in  L^2_{\mathbb{F}}(0,T;L^2(G))$ on the diffusion term.
It would be quite interesting to establish the null controllability for (\ref{equationy1}) by only one control force or the control $U$ acting only on a sub-domain of $G$.
However, this seems difficult for us.
In fact, these problems are still open, even for general linear SPDEs (see e.g., \cite{Fu2017Controllability,Gao2015Observability,Tang2009Null}).
\end{remark}
As mentioned above, to prove the null controllability of (\ref{equationy2}), we need to establish the Carleman estimates for the following backward stochastic fourth order parabolic equation with a source in drift term:
\begin{equation}\label{equationr}
	\left\{
		\begin{aligned}
	&\dd r-r_{xxxx}\dt=\Phi\dt+R\dd B(t) &\textup{in}\ &Q,\\
    &r(t,0)=r(t,1)=0 &\textup{in}\ &(0,T),\\
    &r_{x}(t,0)=r_{x}(t,1)=0 &\textup{in}\ &(0,T),\\
    &r(T)=r_T &\textup{in}\ &G,
         \end{aligned}
    \right.
\end{equation}
where $(r,R)$ denotes the solution associated to the terminal data $r_{T}$ and we assume that $\Phi=\phi_0+\phi_{1x}+\phi_{2xx}$ with $\phi_{0},\phi_{1},\phi_{2}\in L^2_{\mathbb{F}}(0,T;L^2(G)) $.
Noting that $\Phi\in L^2_{\mathbb{F}}(0,T;H^{-2}(G))$, when $r_{T}\in L^2_{\mathcal{F}_{T}}(\Omega;L^{2}(G))$, one can show that (\ref{equationr})  admits a unique solution $(r,R)\in \mathcal{H}_{T}\times L^2_{\mathbb{F}}(0,T;L^2(G))$ (see \cite[Theorem 4.11]{Lu2021Mathematical}).

To state our Carleman estimates for (\ref{equationr}), we first introduce the weight function. To begin with, we introduce some auxiliary functions.
Let $G'$ be any given nonempty open subset of $G$ satisfying $G'\subset\subset G_{0}$. It can be seen from \cite{Fursikov1996Controllability}  that there exists a function $\beta\in C^4(\barr{G})$ such that
\begin{equation}\label{beta}
	\left\{
		\begin{aligned}
	&0<\beta(x)\leq 1\quad\forall x\in G,\\
	&\beta(x)=0\quad \forall x\in \{0,1\},\\
	&\inf_{G\backslash \barr{G'}}\{|\beta'(x)|\}\geq \alpha_0 >0.
	    \end{aligned}
    \right.
\end{equation}
Without loss of generality, in what follows we assume that $0<T<1$. For some constant $m\geq 1$ and $\sigma \geq 2$, we define the following weight function depending on the time variable:
\begin{equation}\label{gamma}
	\left\{
		\begin{aligned}
	&\gamma(t)=1+\bigg(1-\frac{4t}{T}\bigg)^{\sigma}, &t\in &[0,T/4),\\
	&\gamma(t)=1,&t\in &[T/4,T/2),\\
	&\gamma(t)\ \text{is increasing on}\ [T/2,3T/4),\\
	&\gamma(t)=\frac{1}{(T-t)^m},& t\in &[3T/4,T),\\
	&\gamma(t)\in C^2([0,T)).
	    \end{aligned}
    \right.
\end{equation}
Set
\begin{equation}\label{101}
    \alpha(x)=e^{\mu(10m+\beta(x))}-\mu e^{\mu(10m+10)},\quad
	\varphi(t,x)=\gamma(t)\alpha(x),\quad
    \xi(t,x)=\gamma(t)e^{\mu(10m+\beta(x))},
\end{equation}
where $\mu$ is a positive parameter with $\mu\geq 2$ and $\sigma$ is chosen as
\begin{equation}\label{sigma}
\sigma=\lambda^3\mu^4e^{\mu (30m-6)}	
\end{equation}
for some parameter $\lambda\geq 1$.
We finally set the weight function
\begin{equation}\label{theta}
	\theta =e^{\ell},\ \text{where}\ \ell(t,x)=\lambda\varphi(t,x).
\end{equation}
\begin{remark}
It can be seen from the (\ref{theta})  that, compared with the classical weight function (see, e.g. \cite{Tang2009Null}), the main difference here is that the weight does not degenerate as $t\rightarrow 0^+$.
From the choice of (\ref{101}) and (\ref{sigma}), it can be seen that our weight function is also different from that in \cite{Hernandez-Santamaria2023Global} and \cite{Zhang2024New}.
\end{remark}
\begin{remark}
Here, due to the assumement $0<T<1$, we actually obtain that the system (\ref{equationy1}) is globally null controllable at any small time.
\end{remark}
Now, we state the results of the Carleman estimate.
The first result provides a new global Carleman estimate for (\ref{equationr}) with the drift term taking values in $L^2(G)$.
\begin{theorem}\label{carle2}
Assume that $\phi_{0}\in L^2_{\mathbb{F}}(0,T;L^2(G))$, $\phi_{1}\equiv 0$,  $\phi_{2}\equiv 0$ in $(\ref{equationr})$, then there exist $\lambda_{0}>0$ and $\mu_{0}>0$ such that the unique solution $(r,R)\in \mathcal{H}_{T}\times L^2_{\mathbb{F}}(0,T;L^2(G))$ to $(\ref{equationr})$ with respect to $r_{T}\in L^2_{\mathcal{F}_{T}}(\Omega;L^{2}(G))$ satisfies
\begin{equation}\label{carest2}	
    \begin{split}
&\lambda^4\mu^5 e^{4\mu(10m+1)}\EE \|\theta(0,\cdot) r(0,\cdot)\|_{L^{2}(G)}^2
+\EE \int_{Q}\theta^2\lambda^3\mu^4\xi^3|r_{xx}|^2\dxt\\
&+\EE \int_{Q}\theta^2\lambda^5\mu^6\xi^5|r_{x}|^2\dxt
+\EE \int_{Q}\theta^2\lambda^7\mu^8\xi^7|r|^2\dxt \\
\leq\ &
C\bigg(\EE \int_{0}^{T}\int_{G_0} \theta^2\lambda^7\mu^8\xi^7|r|^2\dxt
+\EE \int_{Q}\theta^2|\phi_0|^2\dxt
+\EE \int_{Q}\theta^2\lambda^4\mu^4\xi^5|R|^2\dxt
\bigg),
\end{split}
\end{equation}
for all $\lambda\geq\lambda_{0}$ and $\mu\geq\mu_{0}$, where  $C>0$ only depends on $G$ and $G_0$.
\end{theorem}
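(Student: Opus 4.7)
The plan is to adapt the weighted multiplier approach for stochastic parabolic equations, in the spirit of Tang--Zhang, to the fourth-order backward setting with the new weight (\ref{theta}). I would begin by setting $z:=\theta r$ and applying It\^o's formula to compute
\begin{equation*}
\dd z = \theta_t r\,\dt + \theta\,\dd r = (\theta_t r + \theta r_{xxxx} + \theta \phi_0)\,\dt + \theta R\,\dd B(t).
\end{equation*}
Writing $r=\theta^{-1}z$ and expanding $\theta r_{xxxx}$ via Leibniz yields an identity of the schematic form $\dd z - P(z)\,\dt = \theta\phi_0\,\dt+\theta R\,\dd B(t)$, where $P(z)$ is a fourth-order spatial differential expression whose coefficients are polynomial in $\lambda,\mu,\gamma$ and the derivatives of $\beta$. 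I would split $P(z)$ into a formally symmetric part $\mathcal{L}_1 z$ (carrying $z_{xxxx}$, $\ell_x^2 z_{xx}$, and analogous self-adjoint pieces) and a formally anti-symmetric part $\mathcal{L}_2 z$ (carrying the odd-order $\ell_x z_{xxx}$, $\ell_x^3 z_x$, together with the time-evolution and stochastic pieces), so that $\mathcal{L}_1 z + \mathcal{L}_2 z = \theta \phi_0 + (\text{remainder})$.

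Next, the Carleman bound comes from the cross term: squaring, integrating on $Q$, and taking expectation yield
\begin{equation*}
2\,\EE\!\int_Q \mathcal{L}_1 z\cdot\mathcal{L}_2 z\,\dxt \leq \EE\!\int_Q \theta^2|\phi_0|^2\,\dxt + (\text{absorbable remainders}).
\end{equation*}
After many integrations by parts in $x$, combined with an It\^o expansion in $t$ (which generates the quadratic-variation correction $\theta^2 R^2$), the left-hand side produces the positive bulk form
\begin{equation*}
\EE\!\int_Q \big(\lambda^3\mu^4\xi^3|z_{xx}|^2+\lambda^5\mu^6\xi^5|z_x|^2+\lambda^7\mu^8\xi^7|z|^2\big)\,\dxt.
\end{equation*}
The matching powers of $\lambda,\mu,\xi$ are forced by $\ell_x=\lambda\mu\beta'(x)\xi$: each spatial derivative of $\ell$ drops a factor of $\mu$, and cubing $\ell_x$ in the cross product produces the $\lambda^3\mu^3\xi^3$ contribution that, compounded with further derivatives of $\ell$, yields the stated $\lambda^{2k+1}\mu^{2k+2}\xi^{2k+1}$ weights for $k=1,2,3$. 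Undoing the change $z=\theta r$ then produces the first three integral terms on the left of (\ref{carest2}), while the It\^o correction of $\theta R\,\dd B(t)$ gives precisely the $\theta^2\lambda^4\mu^4\xi^5|R|^2$ term on the right (the power $\lambda^4\mu^4\xi^5$ matches $|\theta_t/\theta|^2$ once (\ref{sigma}) is substituted into $\gamma'/\gamma$).

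The boundary conditions $r=r_x=0$ give $z=z_x=0$ at $x=0,1$, so almost every boundary contribution from the integration by parts vanishes; the few surviving boundary terms are controlled via the sign condition on $\beta'$ near $\partial G$ (the weight in (\ref{beta}) is designed so that they have the favorable sign). Since the lower bound $|\beta'|\geq\alpha_0$ only holds on $G\setminus\barr{G'}$, the bulk estimate degenerates inside $G'$; introducing a cutoff $\eta\in C_c^\infty(G_0)$ with $\eta\equiv 1$ on $G'$ and testing the equation against multipliers of the form $\eta^2\theta^2\lambda^7\mu^8\xi^7 r$ (plus iterated integration by parts to absorb $r_{xxxx}$) moves the bad interior contributions on $G'$ into the observation integral $\EE\int_0^T\!\int_{G_0}\theta^2\lambda^7\mu^8\xi^7|r|^2\,\dxt$ on the right of (\ref{carest2}). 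Finally, to extract the pointwise term $\EE\|\theta(0,\cdot)r(0,\cdot)\|_{L^2(G)}^2$ with prefactor $\lambda^4\mu^5 e^{4\mu(10m+1)}$, I would apply It\^o to $\theta^2 r^2$ on $[0,T/2]$ -- where $\gamma$ is bounded above and below and $\theta(0,\cdot)$ is comparable to $e^{\lambda(1-\mu)e^{\mu(10m+10)}}$ modulo the explicit factor $e^{4\mu(10m+1)}$ -- and combine the resulting energy identity with the interior Carleman bound, using the non-degeneracy of $\gamma$ at $t=0$ to evaluate the bulk estimate at the initial time.

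The main obstacle will be the combinatorial bookkeeping of $\langle \mathcal{L}_1 z,\mathcal{L}_2 z\rangle$: in the fourth-order setting the number of cross terms is considerably larger than for the second-order parabolic case treated in \cite{Tang2009Null,Gao2015Observability}, and many subleading terms carry the same $\xi$-power as the principal positive forms. Verifying that the top-order quadratic forms in $z_{xx}, z_x, z$ absorb every remainder --- including the time-derivative terms fed by $\gamma'(t)$, which (unlike in the classical degenerate weight) do not vanish on $[0,T/4)$ --- is exactly what forces the specific tuning $\sigma=\lambda^3\mu^4 e^{\mu(30m-6)}$ in (\ref{sigma}). Making this absorption rigorous for all $\lambda\geq \lambda_0$ and $\mu\geq \mu_0$ is the technical heart of the argument.
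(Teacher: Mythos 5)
Your overall skeleton—conjugating by $\theta$, splitting the conjugated operator into a ``symmetric'' and an ``antisymmetric'' part, expanding the cross term via It\^o's formula and integration by parts, using the sign of $\beta'$ at $x=0,1$ to handle the spatial boundary terms, and then localizing the degenerate interior contributions on $G'$ into the observation region $G_0$ with a cutoff and auxiliary energy identities—is exactly the route the paper takes (its $\mathcal{P}_1,\mathcal{P}_2$ play the role of your $\mathcal{L}_2,\mathcal{L}_1$, and the two It\^o computations with $\theta^2\xi\eta r_{xx}\,\dd r$ and $\theta^2\xi^3\eta r\,\dd r$ implement your cutoff step). A small slip: the coefficient $\lambda^4\mu^4\xi^5$ of $|R|^2$ does not come from $|\theta_t/\theta|^2=|\ell_t|^2$; it comes from the quadratic-variation term $-\mathcal{K}_0|\dd w|^2$, whose coefficient is \emph{linear} in $\ell_t$ (namely $\lambda^4\mu^4\xi^4\beta_x^4+\ell_t$ with $|\ell_t|=\lambda|\varphi_t|\lesssim\lambda^4\mu^2\xi^5$), so the exponent happens to be right but for the wrong reason.

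The genuine gap is your derivation of the first term on the left of (\ref{carest2}). You propose to obtain $\lambda^4\mu^5e^{4\mu(10m+1)}\,\EE\|\theta(0,\cdot)r(0,\cdot)\|^2_{L^2(G)}$ by a separate application of It\^o's formula to $\theta^2r^2$ on $[0,T/2]$ and then combining with the interior Carleman bound. That energy identity yields $\EE\|\theta(0)r(0)\|^2$ with coefficient $1$, together with bulk remainders on $(0,T/2)$ of size at least $\lambda^2\mu^2\xi^2\theta^2|r_x|^2$ and $\lambda^4\mu^4\xi^4\theta^2|r|^2$ (coming from integrating $\theta^2 r\,r_{xxxx}$ by parts). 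To install the prefactor $\lambda^4\mu^5e^{4\mu(10m+1)}=\lambda^4\mu^5e^{\mu(40m+4)}$ you must multiply the whole identity by it, and since on $(0,T/2)$ one has $\xi^3\le 8e^{\mu(30m+3)}\ll e^{\mu(40m+4)}$, the resulting remainders (e.g. $\lambda^8\mu^9e^{\mu(40m+4)}\xi^4\theta^2|r|^2$ versus the available $\lambda^7\mu^8\xi^7\theta^2|r|^2$) are strictly \emph{larger} than the bulk terms on the left-hand side and cannot be absorbed for any admissible $\lambda,\mu$. The mechanism in the paper is different and is the whole point of the weight design: the initial-time term falls out of the time-boundary term $\mathcal{M}_0$ of the pointwise identity, whose coefficient of $|w(0)|^2$ contains $-\mathcal{K}_0(0)=\ell_t(0)+\lambda^4\mu^4\xi^4(0)\beta_x^4$, and $\ell_t(0,x)=\tfrac{4\sigma}{T}\lambda\big(\mu e^{\mu(10m+10)}-e^{\mu(10m+\beta(x))}\big)\ge c_0\lambda^4\mu^5e^{4\mu(10m+1)}$ precisely because $\gamma'(0^+)=-4\sigma/T$ with $\sigma=\lambda^3\mu^4e^{\mu(30m-6)}$; the cross term $\mathcal{K}_2(0)w_{xx}(0)w(0)$ is then absorbed by $|w_{xx}(0)|^2$ and this dominant coefficient (which is also why the identity must be arranged so that $\mathcal{M}_0$ contains no $w_x$ contribution). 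So the role of $\sigma$ is not merely to make the $\gamma'$-remainders absorbable, as you suggest; it is chosen to \emph{generate} the initial observability term, and your proof needs to be rerouted through the time-boundary term of the weighted identity to obtain it.
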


\begin{remark}
Since the weight function $\gamma$ does not blow up as $t\rightarrow 0$, it prevents $\theta$ from vanishing at $t=0$.
This change brings additional difficulties compared with the classical proof of the Carleman estimate for the backward stochastic fourth order parabolic equation (see e.g., \cite[Theorem 4.1]{Gao2015Observability}),  and leads to the appearance of the first term on the left side of (\ref{carest2}).
\end{remark}
\begin{remark}
This type of Carleman estimate was first considered in \cite{Badra2016Local} to
deal with the local trajectory controllability for the incompressible Navier-Stokes equations.
Later, references \cite{Hernandez-Santamaria2023Global} and \cite{Zhang2024New} developed the ideas in \cite{Badra2016Local} to study the global null controllability of stochastic semilinear parabolic equations.
\end{remark}
\begin{remark}
Unlike the Carleman estimate in \cite[Theorem 4.1]{Gao2015Observability}, the power of $\xi$ in the last term of (\ref{carest2}) is five instead of four. This is because the explicit form of $\gamma$ on $[0,T/4]$ slightly affect the estimate of $\varphi_{t}$ in $[0,T/4]$. But this change is not a problem for proving our main controllability results.
\end{remark}
\begin{remark}\label{remark01}
From the proof of Theorem \ref{carle2}, it is not difficult to prove that Carleman inequality
(\ref{carest2}) is still valid when $r$ satisfies the boundary condition
\begin{equation}\label{103}
\left\{
		\begin{aligned}
    &r(t,0)=r(t,1)=0 &\textup{in}\ &(0,T),\\
    &r_{xx}(t,0)=r_{xx}(t,1)=0 &\textup{in}\ &(0,T).
         \end{aligned}
    \right.
\end{equation}
Indeed, in order to get the Carleman estimate with (\ref{103}), we only need to modify the estimate of the boundary terms in the proof of Theorem \ref{carle2}.
We provide a brief proof of this in Appendix A.
\end{remark}
\begin{remark}
From the proof of Theorem \ref{carle2}, it is easy  to see that we can also establish the Carleman estimate for the following stochastic system with lower-order terms:
\begin{equation}\nonumber
\dd r-r_{xxxx}\dt
= (a_2r_{xx}+a_{1}r_{x}+a_{0}r+\Phi)\dt +(b_{0}r+R)\dd B(t)\quad {in}\ Q,
\end{equation}
with suitable coefficients $a_{2},a_{1},a_{0},b_{0}$. For the conciseness of the statement, we only discuss the one without lower-order terms in this paper.	
\end{remark}

Next, the following result gives a new global Carleman estimate for (\ref{equationr}) with the drift term taking values in $H^{-2}(G)$.
\begin{theorem}\label{carle1}
There exist two positive constants $\lambda_{0}>0$ and $\mu_{0}>0$, such that the unique solution $(r,R)\in \mathcal{H}_{T}\times L^2_{\mathbb{F}}(0,T;L^2(G))$ of (\ref{equationr}) with respect to $r_{T}\in L^2_{\mathcal{F}_{T}}(\Omega;L^{2}(G))$ satisfies
    \begin{equation}\label{carest1}
    \begin{split}
	&\EE\int_{G}\lambda^3\mu^4e^{30\mu m}\theta^2(0)r^2(0)\dx
	+\EE\int_{Q}\lambda^3\mu^4\xi^3\theta^2|r_{xx}|^2\dxt
	\\
	&+\EE \int_{Q}\theta^2\lambda^{5}\mu^{6}\xi^{5}|r_{x}|^2\dxt
	+\EE\int_{Q}\lambda^7\mu^8\xi^7\theta^2r^2\dxt
	\\
	\leq
	&C\bigg(\EE \int_{Q}\theta^{2}|\phi_{0}|^2\dxt
	+\EE \int_{Q}\theta^{2}\lambda^{2}\mu^{2}\xi^{3}|\phi_{1}|^2\dxt\\
	&+\EE \int_{Q}\theta^{2}\lambda^{4}\mu^{4}\xi^{5}|\phi_{2}|^2\dxt
	+\EE \intt\int_{G_0}\theta^{2}\lambda^{7}\mu^{8}\xi^{7}|r|^2\dxt\\
	&+\EE \int_{Q}\theta^{2}\lambda^{4}\mu^{4}\xi^{5}|R|^2\dxt\bigg),
	\end{split}
	\end{equation}	
for all $\lambda\geq \lambda_{0}$ and $\mu\geq\mu_{0}$, where  $C>0$ only depends on $G$ and $G_0$.	
\end{theorem}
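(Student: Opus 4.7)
The plan is to derive Theorem \ref{carle1} from the $L^{2}$-valued Carleman estimate of Theorem \ref{carle2} by a duality argument in the spirit of the Hilbert Uniqueness Method. The starting point is the observation that (\ref{carest2}), read as an observability inequality for the backward adjoint equation, is equivalent, via HUM, to a weighted null controllability for the corresponding forward equation; pairing the solution $r$ of (\ref{equationr}) with the controlled trajectory via It\^o's formula and integrating by parts in $x$ then transfers the spatial derivatives from $\phi_{1x}$ and $\phi_{2xx}$ onto the test function, where they become controllable objects whose cost is supplied by Step 1.

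Concretely, the argument runs as follows. \textbf{Step 1 (Forward controllability with weighted cost).} Using Theorem \ref{carle2} and a penalised HUM construction, one shows that for every $v_{0}\in L^{2}_{\mathcal{F}_{0}}(\Omega;L^{2}(G))$ the forward equation
$$
\dd v+v_{xxxx}\dt=\chi_{G_{0}}u\dt+V\,\dd B(t),\qquad v(0)=v_{0},\quad v=v_{x}=0\text{ on }\partial G,
$$
admits controls $(u,V)$ driving $v(T)$ to zero, with $v,v_{x},v_{xx},V,u$ satisfying the weighted estimates
$$
\EE\!\int_{Q}\!\Big(\theta^{-2}|v|^{2}+\theta^{-2}\lambda^{-2}\mu^{-2}\xi^{-3}|v_{x}|^{2}+\theta^{-2}\lambda^{-4}\mu^{-4}\xi^{-5}|v_{xx}|^{2}+\theta^{-2}\lambda^{-4}\mu^{-4}\xi^{-5}|V|^{2}\Big)\dxt+\EE\!\!\int_{0}^{T}\!\!\int_{G_{0}}\!\theta^{-2}\lambda^{-7}\mu^{-8}\xi^{-7}|u|^{2}\dxt\leq C\,\EE\!\!\int_{G}\theta^{-2}(0)\lambda^{-3}\mu^{-4}e^{-30\mu m}v_{0}^{2}\,\dx,
$$
the weights being the exact reciprocals of those that appear in the target estimate (\ref{carest1}). \textbf{Step 2 (It\^o duality).} Applying It\^o's formula to $\langle r,v\rangle_{L^{2}(G)}$, using the clamped boundary conditions on $r$ and $v$ to integrate by parts in $x$, and exploiting $v(T)=0$ yields
$$
-\EE\!\!\int_{G}r(0)v_{0}\,\dx=\EE\!\!\int_{Q}\!\big(v\phi_{0}-v_{x}\phi_{1}+v_{xx}\phi_{2}\big)\dxt+\EE\!\!\int_{0}^{T}\!\!\int_{G_{0}}\!u\,r\,\dxt+\EE\!\!\int_{Q}\!V R\,\dxt.
$$
\textbf{Step 3 (Extraction of the Carleman norms).} The right-hand side of this identity defines a continuous linear functional of $v_{0}$ on the weighted Hilbert space of initial data with norm $\|v_{0}\|_{\star}^{2}:=\EE\int_{G}\theta^{-2}(0)\lambda^{-3}\mu^{-4}e^{-30\mu m}v_{0}^{2}\,\dx$: Cauchy--Schwarz on each piece of the RHS combined with the Step 1 bounds shows that the operator norm of this functional is majorised by the square root of the RHS of (\ref{carest1}). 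Riesz representation then recovers the first term $\lambda^{3}\mu^{4}e^{30\mu m}\EE\int_{G}\theta^{2}(0)r^{2}(0)\,\dx$ on the LHS of (\ref{carest1}). The interior terms are obtained by repeating the same duality scheme on subintervals $[t_{0},T]$ with varying $t_{0}$, or equivalently by splitting $r=\tilde r+\psi$, where $\psi$ solves pathwise the deterministic equation $\dd\psi-\psi_{xxxx}\dt=(\phi_{1x}+\phi_{2xx})\dt$ with clamped boundary and $\psi(T)=0$, applying Theorem \ref{carle2} to $\tilde r$ (which has only the $L^{2}$-source $\phi_{0}$), and closing by Carleman-weighted multiplier estimates on $\psi,\psi_{x},\psi_{xx}$.

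The principal obstacle is the precise calibration of weights in the HUM construction of Step 1: each weight on $v^{(j)}$, $V$ or $u$ must be the exact reciprocal of the weight paired with $\phi_{j}$, $R$ or the $G_{0}$-observation of $r$ in (\ref{carest1}), which forces an asymmetric, carefully chosen penalised functional rather than the symmetric one customary in classical HUM. A second subtlety is that the non-degeneracy of $\theta$ at $t=0$---built into the choice of $\gamma$ in (\ref{gamma}) and solely responsible for the presence of the initial-time term on the LHS of (\ref{carest1})---requires the Step 1 estimates to remain coercive down to $t=0$, so the penalisation procedure must be set up to preserve this feature in the limit.
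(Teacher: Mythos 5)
Your overall architecture (a penalised HUM construction of a forward controlled auxiliary system whose cost weights are the reciprocals of the target weights, followed by It\^o duality and weighted energy estimates) is the same as the paper's, but your auxiliary system is missing the ingredient that actually produces the interior terms of (\ref{carest1}). In the paper, the auxiliary forward equation (\ref{equationh}) carries the weighted solution itself as source, namely $\lambda^{7}\mu^{8}\xi^{7}\theta^{2}r$ in the drift and $\lambda^{4}\mu^{4}\xi^{5}\theta^{2}R$ in the diffusion, and is driven from $h_{0}=0$ to $h(T)=0$. The It\^o duality pairing $\dd(\hat h r)$ then produces $\EE\int_{Q}\lambda^{7}\mu^{8}\xi^{7}\theta^{2}r^{2}\dxt+\EE\int_{Q}\lambda^{4}\mu^{4}\xi^{5}\theta^{2}R^{2}\dxt$ on the left of the duality identity (\ref{40}), which is exactly the interior weighted norm one needs; the $r_{xx}$, $r_{x}$ and $r(0)$ terms are afterwards recovered by weighted energy estimates on (\ref{equationr}) itself (It\^o on $\lambda^{3}\mu^{4}\xi^{3}\theta^{2}r^{2}$ plus an interpolation for $r_{x}$). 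Your source-free auxiliary system with initial datum $v_{0}$ yields, via duality and Riesz representation, only the initial-time term $\EE\int_{G}\theta^{2}(0)r^{2}(0)\dx$ (and even there the paper obtains it more simply as a by-product of the energy estimate, with the power $\lambda^{3}\mu^{4}e^{30\mu m}$ coming from $\xi^{3}(0)\gtrsim e^{30\mu m}$ rather than from the sharper constant $\lambda^{4}\mu^{5}e^{4\mu(10m+1)}$ of (\ref{carest2})).

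The two fixes you propose for the interior terms do not close the gap. Running the duality on subintervals $[t_{0},T]$ requires re-building the weight adapted to each subinterval; the resulting bound on $r(t_{0})$ carries the weight and powers of the initial-time term ($\lambda^{3}\mu^{4}$ times a weight bounded in $t_{0}$), not the $\lambda^{7}\mu^{8}\xi^{7}\theta^{2}$ required in (\ref{carest1}), and the right-hand sides would involve $t_{0}$-dependent weights that cannot be uniformly matched to the fixed weights of the statement. The splitting $r=\tilde r+\psi$ with $\psi$ solving \emph{pathwise} the deterministic backward equation with terminal datum $\psi(T)=0$ is not admissible in the stochastic setting: since $\phi_{1},\phi_{2}$ are merely adapted, such a $\psi$ anticipates the filtration, so $\tilde r$ is not adapted and Theorem \ref{carle2} does not apply to it; replacing $\psi$ by the adapted solution of a backward stochastic equation with source $\phi_{1x}+\phi_{2xx}$ is circular, as estimating that solution in Carleman norms is precisely the statement being proved. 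To repair the proof you should modify Step 1 so that the controlled forward system has $(\lambda^{7}\mu^{8}\xi^{7}\theta^{2}r,\lambda^{4}\mu^{4}\xi^{5}\theta^{2}R)$ as sources and zero initial datum, prove the weighted controllability estimate with the right-hand side depending on $\EE\int_{Q}\lambda^{7}\mu^{8}\xi^{7}\theta^{2}r^{2}\dxt+\EE\int_{Q}\lambda^{4}\mu^{4}\xi^{5}\theta^{2}R^{2}\dxt$ (this also requires the weighted energy estimate for $h_{xx}$, $h_{x}$ that you list), and then absorb these terms after the Young inequality in the duality identity.
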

\begin{remark}\label{remark02}
According to Remark \ref{remark01} and the proof process of Theorem \ref{carle1}, it is not difficult to see that Carleman inequality
(\ref{carest1}) is still valid when $r$ satisfies the boundary condition (\ref{103}).
\end{remark}
\begin{remark}
In \cite{Gao2016A}, the author considered the Carleman estimate for the deterministic fourth order parabolic equation with $H^{-2}$-valued source terms.
However, we cannot simply mimic the method in \cite{Gao2016A} to obtain Theorem \ref{carle1}, because we need to establish a Carleman estimate in a stochastic case and our weight function is different from the one used in \cite{Gao2016A}.
As far as we know, the Carleman estimate (\ref{carest1})  for the stochastic fourth order parabolic equation has not been addressed in the literature.
These types of Carleman estimates for second order stochastic parabolic equations can be found in \cite{Liu2014Global} and \cite{Zhang2024New}.
\end{remark}

In the past few years, the Carleman estimates have received much attention in SPDEs.
Although there are numerous results for the global Carleman estimate for deterministic fourth order parabolic type equation (e.g., we refer to the more recent work \cite{Guerrero2019Carleman,Kassab2020Null}), people know little about the Carleman estimate for the stochastic counterpart.
As far as we know, there are few references considering Carleman estimates for stochastic fourth order parabolic type equations and the  known results are only \cite{Gao2018Global,Gao2020Null,Gao2015Observability,Lu2022Null,Zhang2024Unique}.
The references \cite{Gao2018Global,Gao2020Null,Gao2015Observability} are devoted to one-dimensional stochastic Kuramoto-Sivashinsky equations, while \cite{Lu2022Null,Zhang2024Unique} is concerned with high-dimensional stochastic fourth order parabolic equations.
We can also refer to \cite{Hernandez-Santamaria2023Global,Liao2024Stability,Liao2024Exact,Yu2022Carleman,Yuan2021Inverse,Zhang2024New,Zhang2024Unique,Zhang2024Determination}
for some Carleman estimates of SPDEs and their applications.

The remainder of the paper is organized as follows. In section 2, we first establish a new weighted identity for a stochastic fourth order parabolic operator. And then, we present the proof of Theorem \ref{carle2} and Theorem \ref{carle1}. In section 3, we present the proof of Theorem \ref{control}.

\section{The proof of Theorem \ref{carle2} and Theorem \ref{carle1}}

In this section, we shall prove the Theorem \ref{carle2} and Theorem \ref{carle1}. At first, we obtain an identity for a stochastic fourth order parabolic operator, which plays a key role in proving Theorem \ref{carle2}.
\begin{lemma}\label{identity}
Let $l\in C^{1,4}((0,T)\times \mathbb{R})$, $\mathcal{C}_{0}\in C^{0,4}((0,T)\times \mathbb{R})$, $\mathcal{C}_{1}\in C^{0,3}((0,T)\times \mathbb{R})$, $\mathcal{C}_{2}\in C^{0,2}((0,T)\times \mathbb{R})$ and also $\delta\in \mathbb{R}$. Assume that $\Upsilon$ is an $H^4(\mathbb{R})$-valued continuous semi-martingale. Set $\mathcal{L}\Upsilon=\dd \Upsilon
    +\delta \Upsilon_{xxxx}\dt$, $\vartheta=e^{l}$ and $w=\vartheta \Upsilon$. Then for a.e. $x\in \mathbb{R}$ and $\mathbb{P}$-a.s. $\omega\in\Omega$, it holds that
\begin{equation}\label{identity1}
\begin{split}
	2\mathcal{P}_2(\vartheta \mathcal{L}\Upsilon)
	=\mathcal{M}_0
	+\mathcal{M}_1
+\mathcal{A}\dt
	+\mathcal{B}
	+2\mathcal{P}_2\mathcal{P}_3\dt
	+2|\mathcal{P}_2|^2\dt  ,
\end{split}	
\end{equation}
where
\begin{equation}\nonumber
\begin{split}
&\mathcal{K}_2=6\delta l_{x}^{2},
\quad \mathcal{K}_1=12\delta (l_{x}l_{xx}),
\quad\mathcal{K}_0=\delta l_{x}^{4}-l_{t},
\\
&\mathcal{G}_3=-4\delta l_{x},\
\mathcal{G}_2=\delta(-6l_{xx}+\mathcal{C}_{2}) ,
\\
&\mathcal{G}_1=\delta(-4l_{x}^{3}+\mathcal{C}_{1}),\
\mathcal{G}_0=\delta(-6l_{x}^{2}l_{xx}+4l_{x}l_{xxx}+3l_{xx}^2+\mathcal{C}_{0}),
\\
&\mathcal{P}_2
=\delta w_{xxxx}
+\mathcal{K}_2 w_{xx}
+\mathcal{K}_0 w
+\mathcal{K}_1 w_{x},
\\
&\mathcal{P}_3
=-\delta(\mathcal{C}_{1}w_{x}
+ \mathcal{C}_{2}w_{xx}
+ \mathcal{C}_{0}w
+ l_{xxxx}w
+4 l_{xxx}w_{x}),
\\
&\mathcal{B}
=\widetilde{\mathcal{B}}
+(\mathcal{B}_{3}
+\mathcal{B}_{3*}
+\mathcal{B}_{2}
+\mathcal{B}_{2*}
+\mathcal{B}_{1}
+\mathcal{B}_{1*}
+\mathcal{B}_{0})\dt,
\\
&\widetilde{\mathcal{B}}
=[(2\delta w_{xxx}
+\mathcal{K}_{2}w_{x}
+\mathcal{K}_{2x}w
)\dd w]_{x}
-[(2\delta w_{xx}
+\mathcal{K}_{2} w)\dd w_{x}]_{x},
\\
&\mathcal{B}_{3}
=\delta (\mathcal{G}_3 |w_{xxx}|^2)_{x},
\\
&\mathcal{B}_{3*}
=2\delta( \mathcal{G}_{2}w_{xxx}w_{xx})_{x}
+2\delta( \mathcal{G}_1w_{xxx}w_{x})_{x}
+2\delta( \mathcal{G}_{0}w_{xxx}w)_{x},
\\
&\mathcal{B}_{2}
=[(\mathcal{G}_3\mathcal{K}_2
-\delta \mathcal{G}_{1}
-\delta \mathcal{G}_{2x})|w_{xx}|^2]_{x},
\\
\end{split}	
\end{equation}
\begin{equation}\nonumber
\begin{split}
&\mathcal{B}_{2*}
=2[(\mathcal{G}_3\mathcal{K}_1
 -\delta \mathcal{G}_{1x}
 -\delta \mathcal{G}_{0})w_{xx}w_{x}]_{x}
+2[(\mathcal{G}_3\mathcal{K}_0
 -\delta \mathcal{G}_{0x})w_{xx} w]_{x},
\\
&\mathcal{B}_{1}
=\{[-\mathcal{G}_3\mathcal{K}_0
-(\mathcal{G}_3\mathcal{K}_1)_{x}
+\delta \mathcal{G}_{1xx}
+\mathcal{G}_{1}\mathcal{K}_{2}
+\mathcal{G}_{2}\mathcal{K}_{1}
+2\delta \mathcal{G}_{0x}]|w_{x}|^2\}_{x},
\\
&\mathcal{B}_{1*}
=\{[-2(\mathcal{G}_3\mathcal{K}_0)_{x}
+2\mathcal{G}_{2}\mathcal{K}_{0}
+2\delta \mathcal{G}_{0xx}
+2\mathcal{G}_{0}\mathcal{K}_{2}
-\mathcal{K}_{2t}]w_{x}w\}_{x},
\\
&\mathcal{B}_{0}
=\{[(\mathcal{G}_3\mathcal{K}_0)_{xx}
+\mathcal{G}_{1}\mathcal{K}_{0}
-(\mathcal{G}_{2}\mathcal{K}_{0})_{x}
-\delta \mathcal{G}_{0xxx}
-(\mathcal{G}_{0}\mathcal{K}_{2})_{x}
+\mathcal{G}_{0}\mathcal{K}_{1}
+\frac{1}{2}\mathcal{K}_{2xt}]|w|^2\}_{x},
\\
&\mathcal{M}_0
=\delta \dd(|w_{xx}|^2)
+\dd(\mathcal{K}_{2}w_{xx}w)
-\frac{1}{2}\dd(\mathcal{K}_{2xx}w^2)
+\dd(\mathcal{K}_{0}w^2),
\\
&\mathcal{M}_{1}
=-\delta |\dd w_{xx}|^2
-\mathcal{K}_{2}\dd w_{xx}\dd w
+\frac{1}{2}\mathcal{K}_{2xx}|\dd w|^2
-\mathcal{K}_{0} |\dd w|^2,
\\
&\mathcal{A}
=\mathcal{A}_{3}
+\mathcal{A}_{2}
+\mathcal{A}_{1}
+\mathcal{A}_{0},
\\
&\mathcal{A}_3
=-\delta(2\mathcal{G}_{2}
+\mathcal{G}_{3x})|w_{xxx}|^2 ,
\\
&\mathcal{A}_2
=[
2\delta \mathcal{G}_{0}
+3\delta \mathcal{G}_{1x}
+2\mathcal{G}_{2}\mathcal{K}_{2}
-(\mathcal{G}_3\mathcal{K}_2)_{x}
-2\mathcal{G}_3\mathcal{K}_1
+\delta \mathcal{G}_{2xx}
]|w_{xx}|^2,
\\
&\mathcal{A}_1
=[-2\mathcal{G}_{0}\mathcal{K}_{2}
-(\mathcal{G}_{1}\mathcal{K}_{2})_{x}
+2\mathcal{G}_{1}\mathcal{K}_{1}
-2\mathcal{G}_{2}\mathcal{K}_{0}
+3(\mathcal{G}_3\mathcal{K}_0)_{x}
\\
&\quad\quad\ -4\delta \mathcal{G}_{0xx}
-\delta \mathcal{G}_{1xxx}
-(\mathcal{G}_{2}\mathcal{K}_{1})_{x}
+(\mathcal{G}_3\mathcal{K}_1)_{xx}
+\mathcal{K}_{2t}]|w_{x}|^2,
\\
&\mathcal{A}_0
=[2\mathcal{G}_{0}\mathcal{K}_{0}
-(\mathcal{G}_{1}\mathcal{K}_{0})_{x}
\\
&\quad\quad\ +\delta \mathcal{G}_{0xxxx}
+(\mathcal{G}_{0}\mathcal{K}_{2})_{xx}
-(\mathcal{G}_{0}\mathcal{K}_{1})_{x}
+(\mathcal{G}_{2}\mathcal{K}_{0})_{xx}
-(\mathcal{G}_3\mathcal{K}_0)_{xxx}
-\mathcal{K}_{0t}]|w|^2.
\end{split}	
\end{equation}
\end{lemma}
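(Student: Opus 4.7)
The plan is to derive the pointwise identity by the standard ``conjugate the operator and split into symmetric/antisymmetric parts'' strategy, adapted to the stochastic fourth-order setting. The first step is to rewrite $\vartheta \mathcal{L}\Upsilon$ entirely in terms of $w = \vartheta\Upsilon$ and its spatial derivatives. Since $\vartheta = e^{l}$ is deterministic and smooth in $(t,x)$, Itô's formula applied to $\Upsilon = e^{-l} w$ gives $\vartheta\,\dd\Upsilon = \dd w - l_t w\,\dt$, while the algebraic identity $\partial_x^n(e^{-l} w) = e^{-l}(\partial_x - l_x)^n w$ with $n=4$ expands $\vartheta \Upsilon_{xxxx}$ into an explicit polynomial in $w, w_x, w_{xx}, w_{xxx}, w_{xxxx}$ whose coefficients involve $l_x, l_{xx}, l_{xxx}, l_{xxxx}$. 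Introducing the free functions $\mathcal{C}_0, \mathcal{C}_1, \mathcal{C}_2$, one regroups the drift as $(\mathcal{P}_2 + \mathcal{P}_3 + \mathcal{R})\dt$, where $\mathcal{P}_2$ collects the ``principal symmetric'' part ($\delta w_{xxxx}$ together with the $\mathcal{K}_i$ terms), $\mathcal{P}_3$ absorbs the $\mathcal{C}_i$ contributions and the purely $l$-dependent low-order pieces ($-\delta l_{xxxx}w - 4\delta l_{xxx}w_x$), and $\mathcal{R} := \mathcal{G}_3 w_{xxx} + \mathcal{G}_2 w_{xx} + \mathcal{G}_1 w_x + \mathcal{G}_0 w$ is the ``antisymmetric'' remainder with the $\mathcal{G}_j$ exactly as listed.

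Multiplying by $2\mathcal{P}_2$ then yields $2|\mathcal{P}_2|^2\dt + 2\mathcal{P}_2\mathcal{P}_3\dt$ directly; what must be analyzed are the two remaining contributions $2\mathcal{P}_2\,\dd w$ and $2\mathcal{P}_2\mathcal{R}\,\dt$. For the stochastic one I would apply Itô's formula term-by-term. The leading piece $2\delta w_{xxxx}\,\dd w$ is rewritten, by integrating by parts twice in $x$, as $\delta\,\dd(|w_{xx}|^2) - \delta|\dd w_{xx}|^2$ plus boundary contributions of the form $(2\delta w_{xxx}\,\dd w)_x - (2\delta w_{xx}\,\dd w_x)_x$; the three other terms $2\mathcal{K}_2 w_{xx}\,\dd w$, $2\mathcal{K}_1 w_x\,\dd w$ and $2\mathcal{K}_0 w\,\dd w$ similarly produce the remaining entries of $\mathcal{M}_0$ and $\mathcal{M}_1$ together with the residual boundary term $\widetilde{\mathcal{B}}$ and the $\mathcal{K}_{2t}$-pieces of $\mathcal{A}_1$ and $\mathcal{B}_{1*}$ (the latter arising from pulling a time derivative off of $\mathcal{K}_2$).

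The third step is to integrate $2\mathcal{P}_2 \mathcal{R}$ by parts in $x$ to produce exactly $\mathcal{A}\dt + (\mathcal{B}_3 + \mathcal{B}_{3*} + \mathcal{B}_2 + \mathcal{B}_{2*} + \mathcal{B}_1 + \mathcal{B}_{1*} + \mathcal{B}_0)\dt$. I would organize this by processing the cross-products $\mathcal{K}_i w^{(i)} \cdot \mathcal{G}_j w^{(j)}$ and $\delta w_{xxxx}\cdot \mathcal{G}_j w^{(j)}$ grouped by the total number of derivatives, since integration by parts inside one level only feeds into lower levels: the products reaching $|w_{xxx}|^2$ yield $\mathcal{A}_3$ together with $\mathcal{B}_3, \mathcal{B}_{3*}$; those reaching $|w_{xx}|^2$ yield $\mathcal{A}_2$ and $\mathcal{B}_2, \mathcal{B}_{2*}$; and so on down to $|w|^2$, which yields $\mathcal{A}_0$ and $\mathcal{B}_0$. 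A representative model calculation is $2\delta w_{xxxx} \cdot \mathcal{G}_3 w_{xxx} = \delta(\mathcal{G}_3 |w_{xxx}|^2)_x - \delta \mathcal{G}_{3x}|w_{xxx}|^2$, which already provides $\mathcal{B}_3$ together with the $-\delta \mathcal{G}_{3x}$ contribution inside the bracket defining $\mathcal{A}_3$.

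The main obstacle is the sheer bookkeeping: every integration by parts in $x$ must be tracked, and the resulting bulk and divergence pieces matched exactly against the explicit expressions for $\mathcal{A}_3,\mathcal{A}_2,\mathcal{A}_1,\mathcal{A}_0$ and $\mathcal{B}_3,\mathcal{B}_{3*},\ldots,\mathcal{B}_0, \widetilde{\mathcal{B}}$ in the statement; collecting the contributions from the four stochastic $2\mathcal{P}_2\,\dd w$ pieces and the sixteen deterministic cross-products $2\mathcal{P}_2\mathcal{R}$ is lengthy but entirely mechanical. The free parameters $\mathcal{C}_0, \mathcal{C}_1, \mathcal{C}_2$ are retained throughout so that they may be chosen in the proof of Theorem~\ref{carle2} to realize the positivity structure required for the Carleman estimate; this flexibility is precisely what makes the present lemma a useful identity rather than a single fixed computation.
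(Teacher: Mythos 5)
Your proposal is correct and follows essentially the same route as the paper: the paper likewise conjugates the operator, writes $\vartheta\mathcal{L}\Upsilon=\mathcal{P}_1+\mathcal{P}_2\dt+\mathcal{P}_3\dt$ with $\mathcal{P}_1=\dd w+\mathcal{G}_3w_{xxx}+\mathcal{G}_2w_{xx}+\mathcal{G}_1w_x+\mathcal{G}_0w$ (your $\dd w+\mathcal{R}\,\dt$), and then expands $2\mathcal{P}_1\mathcal{P}_2$ as the twenty products $I_{ij}$, treating the four $\dd w$-products by It\^o's formula and the sixteen deterministic cross-products by writing them as exact $x$-derivatives plus bulk terms, exactly as you describe. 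Your sample computations (the $2\delta w_{xxxx}\,\dd w$ term and the $2\delta\mathcal{G}_3w_{xxxx}w_{xxx}$ term) match the paper's $2I_{11}$ and $2I_{21}$ verbatim.
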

\begin{remark}
Since the construction of the weight function $\gamma$ makes $\theta$ not vanish at $t=0$, we need to carefully estimate all the time boundary terms in the weighted identity.
This leads us to fail in proving our Carleman estimate by using the weighted identity of the fourth order stochastic parabolic operator in \cite{Gao2018Global,Gao2015Observability}.
Therefore, we improve the previous weighted identity for stochastic fourth order parabolic operator (see e.g.,\cite{Gao2018Global,Gao2015Observability}), especially improving the time boundary terms part, so that it can be used to derive our Carleman estimates. Specifically, it can be seen that our time boundary term $\mathcal{M}_{0}$ in (\ref{identity1}) does not contain the term $w_{x}$, which is different from that in \cite{Gao2018Global,Gao2015Observability}.
\end{remark}
\begin{remark}
The weighted identity in (\ref{identity1}) is a very useful identity that can be used to prove Carleman estimates. For such kind of weighted identity and corresponding Carleman estimates for second order stochastic parabolic equations, we refer to \cite{Lu2012Carleman,Tang2009Null}.
For this identity of the fourth order stochastic parabolic equations, we refer to \cite{Gao2018Global,Gao2015Observability} in the one-dimensional case and \cite{Lu2022Null,Zhang2024Unique} in the multi-dimensional case.
\end{remark}
\begin{remark}
We note that when $\delta>0$, the identity can be used to prove Carleman estimates for forward fourth order stochastic parabolic equations. While when $\delta<0$, the identity can be used to prove Carleman estimates for backward fourth order stochastic parabolic equations.	
\end{remark}

\begin{proof}[\bf Proof of lemma \ref{identity}:]\

{\it Step 1:}
Noting $\vartheta=e^{l}$ and $w=\vartheta \Upsilon$, we have

\begin{equation}\label{2}
\begin{split}
    &\vartheta \mathcal{L}\Upsilon
    =\vartheta \mathcal{L}(\vartheta^{-1}w)\\
    &\quad\quad =\vartheta[\dd(\vartheta^{-1}w)+\delta (\vartheta^{-1}w)_{xxxx}\dt ]\\
    &\quad\quad = (\dd w -l_{t}w\dt )
    +\delta [ w_{xxxx}
    +(-4l_{x})w_{xxx}
    +6(l_{x}^{2}-l_{xx})w_{xx}\\
    &\quad \quad \quad +4(-l_{x}^{3}+3l_{x}l_{xx}-l_{xxx})w_{x}
    +(l_{x}^{4}-6l_{x}^2l_{xx}+4l_{x}l_{xxx}+3l_{xx}^2-l_{xxxx})w ]\dt.
	\\
\end{split}
\end{equation}
Set
\begin{equation}\nonumber
\begin{split}
	&\mathcal{P}_1
	=\dd w
	+[\mathcal{G}_3 w_{xxx}
	+\mathcal{G}_1 w_{x}
	+\mathcal{G}_2 w_{xx}
	+\mathcal{G}_0 w]\dt .
	\\
\end{split}
\end{equation}
Recalling the definitions of $\mathcal{P}_1$, $\mathcal{P}_2$ and $\mathcal{P}_3$, by (\ref{2}), we get
\begin{equation}\nonumber
\begin{split}
	&\vartheta \mathcal{L}\Upsilon
	=\mathcal{P}_1+\mathcal{P}_2\dt +\mathcal{P}_3 \dt ,
\end{split}
\end{equation}
which imply that
\begin{equation}\label{1}
\begin{split}
	2\mathcal{P}_2(\vartheta \mathcal{L}\Upsilon)
	=2|\mathcal{P}_2|^2\dt
	+2\mathcal{P}_1\mathcal{P}_2
	+2\mathcal{P}_2\mathcal{P}_3\dt .
\end{split}	
\end{equation}

{\it Step 2:}
Next, we will compute $2\mathcal{P}_1\mathcal{P}_2$.
We write $2\mathcal{P}_1\mathcal{P}_2$ in the form $\sum_{i=1}^{5}\sum_{j=1}^{4}2I_{ij}$, where $I_{ij}$ is the $i$-th term of $\mathcal{P}_1$ times the $j$-th term of $\mathcal{P}_2$. By It\^o's formula, $I_{11}$-$I_{14}$ can be computed as follows:
\begin{equation}\nonumber
\begin{split}
2I_{11}=\ &2(\delta w_{xxx}\dd w-\delta w_{xx}\dd w_{x})_{x}
+\delta \dd(|w_{xx}|^2)
-\delta |\dd w_{xx}|^2,
\\
2I_{12}=\ &(\mathcal{K}_{2}w_{x}\dd w
+\mathcal{K}_{2x}w\dd w
-\mathcal{K}_{2} w\dd w_{x})_{x}
+(-\mathcal{K}_{2t}ww_{x}
+\frac{1}{2}\mathcal{K}_{2xt}w^2)_{x}\dt \\
&\ +\dd(\mathcal{K}_{2}w_{xx}w)
-\frac{1}{2}\dd(\mathcal{K}_{2xx}w^2)
+\mathcal{K}_{2t}w_x^2\dt \\
&\ -\mathcal{K}_{2}\dd w_{xx}\dd w
+\frac{1}{2}\mathcal{K}_{2xx}|\dd w|^2
-2\mathcal{K}_{2x}w_{x}\dd w,
\\
2I_{13}=\ &\dd(\mathcal{K}_{0}w^2)
-\mathcal{K}_{0t}w^2\dt
-\mathcal{K}_{0} |\dd w|^2,
\\
I_{14}=\ &\mathcal{K}_{1}w_{x}\dd w.
\end{split}
\end{equation}
By direct calculation, one can calculate $I_{ij}$, $i=2,3,4,5, j=1,2,3,4$ as follows:
\begin{equation}\nonumber
\begin{split}
2I_{21}=\ &\delta (\mathcal{G}_3 |w_{xxx}|^2)_{x}\dt
-\delta \mathcal{G}_{3x}|w_{xxx}|^2\dt ,
\\
2I_{22}=\ &(\mathcal{G}_3\mathcal{K}_2 |w_{xx}|^2)_{x}\dt
-(\mathcal{G}_3\mathcal{K}_2)_{x}|w_{xx}|^2\dt ,
\\
2I_{23}=\ &2(\mathcal{G}_3\mathcal{K}_0 w_{xx} w)_{x}\dt
-2[(\mathcal{G}_3\mathcal{K}_0)_{x}w_{x}w]_{x}\dt
+[(\mathcal{G}_3\mathcal{K}_0)_{xx}|w|^2]_{x}\dt
-(\mathcal{G}_3\mathcal{K}_0 |w_{x}|^2)_{x}\dt \\
 &\ +3(\mathcal{G}_3\mathcal{K}_0)_{x}|w_{x}|^2\dt
 -(\mathcal{G}_3\mathcal{K}_0)_{xxx}|w| ^2\dt ,
 \\
 2I_{24}=\ &2(\mathcal{G}_3\mathcal{K}_1w_{xx}w_{x})_{x}\dt
 -[(\mathcal{G}_3\mathcal{K}_1)_{x}|w_{x}|^2]_{x}\dt
 -2\mathcal{G}_3\mathcal{K}_1|w_{xx}|^2\dt
 +(\mathcal{G}_3\mathcal{K}_1)_{xx}|w_{x}|^2\dt ,
\\
2I_{31}=\ &2(\delta \mathcal{G}_1w_{x}w_{xxx})_{x}\dt
-2(\delta \mathcal{G}_{1x}w_{x}w_{xx})_{x}\dt
-(\delta \mathcal{G}_{1}|w_{xx}|^2)_{x}\dt
+(\delta \mathcal{G}_{1xx}|w_{x}|^2)_{x}\dt \\
&+3\delta \mathcal{G}_{1x}|w_{xx}|^2\dt
-\delta \mathcal{G}_{1xxx}|w_{x}|^2\dt,
\\
2I_{32}=\ &(\mathcal{G}_{1}\mathcal{K}_{2}|w_{x}|^2)_{x}\dt
-(\mathcal{G}_{1}\mathcal{K}_{2})_{x}|w_{x}|^2\dt ,
\\
2I_{33}=\ &(\mathcal{G}_{1}\mathcal{K}_{0}w^2)_{x}\dt
-(\mathcal{G}_{1}\mathcal{K}_{0})_{x}w^2\dt ,
\\
I_{34}=\ &\mathcal{G}_{1}\mathcal{K}_{1}|w_{x}|^2\dt ,
\\
2I_{41}=\ &2(\delta \mathcal{G}_{2}w_{xx}w_{xxx})_{x}\dt
-(\delta \mathcal{G}_{2x}|w_{xx}|^2)_{x}\dt
-2\delta \mathcal{G}_{2}|w_{xxx}|^2\dt
+\delta \mathcal{G}_{2xx}|w_{xx}|^2\dt ,
\\
I_{42}=\ &\mathcal{G}_{2}\mathcal{K}_{2}|w_{xx}|^2\dt ,
\\
2I_{43}=\ &2(\mathcal{G}_{2}\mathcal{K}_{0}w_{x}w)_{x}\dt
-[(\mathcal{G}_{2}\mathcal{K}_{0})_{x}w^2]_{x}\dt
-2\mathcal{G}_{2}\mathcal{K}_{0}|w_{x}|^2\dt
+(\mathcal{G}_{2}\mathcal{K}_{0})_{xx}w^2\dt ,
\\
2I_{44}=\ &(\mathcal{G}_{2}\mathcal{K}_{1}|w_{x}|^2)_{x}\dt
-(\mathcal{G}_{2}\mathcal{K}_{1})_{x}|w_{x}|^2\dt ,
\\
2I_{51}=\ &2(\delta \mathcal{G}_{0}w w_{xxx})_{x}\dt
-2(\delta \mathcal{G}_{0x}w w_{xx})_{x}\dt
-2(\delta \mathcal{G}_{0}w_{x}w_{xx})_{x}\dt \\
&+2(\delta \mathcal{G}_{0xx}w w_{x})_{x}\dt
+2(\delta \mathcal{G}_{0x}|w_{x}|^2)_{x}\dt
-(\delta \mathcal{G}_{0xxx}w^2)_{x}\dt \\
&+2\delta \mathcal{G}_{0}|w_{xx}|^2\dt
-4\delta \mathcal{G}_{0xx}|w_{x}|^2\dt
+\delta \mathcal{G}_{0xxxx}w^2\dt ,
\\
2I_{52}=\ &2(\mathcal{G}_{0}\mathcal{K}_{2}ww_{x})_{x}\dt
-[(\mathcal{G}_{0}\mathcal{K}_{2})_{x}w^2]_{x}\dt
-2\mathcal{G}_{0}\mathcal{K}_{2}|w_{x}|^2\dt
+(\mathcal{G}_{0}\mathcal{K}_{2})_{xx}w^2\dt ,
\\
I_{53}=\ &\mathcal{G}_{0}\mathcal{K}_{0}w^2\dt ,
\\
2I_{54}=\ &(\mathcal{G}_{0}\mathcal{K}_{1}w^2)_{x}\dt
-(\mathcal{G}_{0}\mathcal{K}_{1})_{x}w^2\dt .
\end{split}	
\end{equation}

{\it Step 3:}
By virtue of (\ref{1}) and the fact
\begin{equation}\nonumber
2\mathcal{P}_1\mathcal{P}_2=\sum_{i=1}^{5}\sum_{j=1}^{4}2I_{ij},
\end{equation}
we obtain (\ref{identity1}).
Thus, the proof of lemma \ref{identity} is complete.
\end{proof}
Based on this pointwise weighted identity, we next give the proof for Theorem \ref{carle2}. To begin with, noting (\ref{theta}), it is easy to check that
\begin{equation}\nonumber
\ell_{x}=\lambda\mu\xi\beta_{x},\quad
\ell_{xx}=\lambda\mu^2\xi|\beta_{x}|^2
  +\lambda\mu\xi\beta_{xx},\quad
\ell_{t}=\lambda\varphi\frac{\gamma_{t}}{\gamma},\quad
\ell_{xt}=\lambda\mu\xi\beta_{x}\frac{\gamma_{t}}{\gamma},
\end{equation}
which are useful in the subsequent proof.
\begin{proof}[\bf Proof of Theorem \ref{carle2}:]
The proof will be divided into several steps.

{\it Step 1.}
We apply lemma \ref{identity} with $\delta=-1$, $\Upsilon=r$, $\vartheta=\theta$, $l=\ell$ and set
\begin{equation}\label{18}
\begin{split}
&\mathcal{C}_{2}=-2\lambda\mu^2\xi\beta_{x}^2
+6\lambda\mu\xi\beta_{xx},
\\
&\mathcal{C}_{1}=0,
\\
&\mathcal{C}_{0}=
2\lambda^3\mu^4\xi^3\beta_{x}^4
+6\lambda^3\mu^3\xi^3\beta_{x}^2\beta_{xx}
-7\lambda^2\mu^4\xi^2\beta_{x}^4
\\
&-18\lambda^2\mu^3\xi^2\beta_{x}^2\beta_{xx}
-4\lambda^2\mu^2\xi^2\beta_{x}\beta_{xxx}
-3\lambda^2\mu^2\xi^2\beta_{xx}^2,
\end{split}
\end{equation}
then by simple calculation, we have
\begin{equation}\label{4}
\begin{split}
&\mathcal{K}_2
=-6 \lambda^2\mu^2\xi^2\beta_{x}^2,
\\
&\mathcal{K}_1
=-12\lambda^2\mu^3\xi^2\beta_{x}^{3}
-12\lambda^2\mu^2\xi^2\beta_{x}\beta_{xx},
\\
&\mathcal{K}_0
=-\lambda^4\mu^4\xi^4\beta_{x}^{4}
-\ell_{t},
\\
&\mathcal{G}_3
=4 \lambda\mu\xi\beta_{x},
\\
&\mathcal{G}_2
=8\lambda\mu^2\xi\beta_{x}^{2} ,
\\
&\mathcal{G}_1
=4\lambda^3\mu^3\xi^3\beta_{x}^{3},
\\
&\mathcal{G}_0
=4\lambda^3\mu^4\xi^3\beta_{x}^4.
\end{split}	
\end{equation}
Integrating (\ref{identity1}) in $Q$, taking mathematical expectation on both sides, we conclude that
\begin{equation}\label{3}
\begin{split}
	2\EE\int_{Q}\mathcal{P}_2(\theta \mathcal{L}r)\dx
	=\ &\EE\int_{Q}\mathcal{M}_0\dx
	+\EE\int_{Q}\mathcal{M}_1\dx
	+\EE\int_{Q}\mathcal{A}\dxt
	\\
	&+\EE\int_{Q}\mathcal{B}\dx
	+2\EE\int_{Q}\mathcal{P}_2\mathcal{P}_3\dxt
	+2\EE\int_{Q}|\mathcal{P}_2|^2\dxt
	=:\sum_{i=1}^{6}\mathcal{J}_{i}.
\end{split}	
\end{equation}

{\it Step 2.}
In this step, let us estimate every term in the (\ref{3}).

{\it Estimate for $\mathcal{J}_1$.}
From the definitions of $w=\theta r$ and $\ell$, we easily see that $\lim_{t\rightarrow T-}\ell(t,\cdot)=-\infty$ and thus the term at $t=T$ vanishes. Therefore, by (\ref{4}) and  expression of $\mathcal{M}_{0}$, we have
\begin{equation}\label{5}
\begin{split}
\int_0^T\mathcal{M}_0
=\ &|w_{xx}(0)|^2
-\mathcal{K}_{2}(0)w_{xx}(0)w(0)
+[\frac{1}{2} \mathcal{K}_{2xx}(0)-\mathcal{K}_{0}(0)]|w(0)|^2,
\\
=\ &|w_{xx}(0)|^2
+[\ell_{t}(0)
+\lambda^4\mu^4\xi^4(0)\beta_{x}^{4}
+\mathcal{O}(\lambda^2)\mu^4\xi^2(0)]|w(0)|^2
\\
&+6\lambda^2\mu^2\xi^2(0)\beta_{x}^2w_{xx}(0)w(0).
\end{split}	
\end{equation}
From the explicit expression of the function $\gamma(t)$ in (\ref{gamma}), we get
\begin{equation}\nonumber
\gamma'(t)=-\frac{4\sigma}{T}\bigg(1-\frac{4t}{T}\bigg)^{\sigma-1}	\quad \forall t\in [0,T/4].
\end{equation}
Using (\ref{sigma}) and the above expression, we have
\begin{equation}\label{6}
\begin{split}
\ell_{t}(0,x)=&-\frac{4}{T}\lambda^4\mu^4e^{\mu (30m-6)}\alpha(x)\\
=&\frac{4}{T}\lambda^4\mu^4e^{\mu (30m-6)}( \mu e^{\mu(10m+10)}-e^{\mu(10m+\beta(x))})\\
\geq &c_0\lambda^4\mu^5 e^{4\mu(10m+1)}
\end{split}
\end{equation}
for all $\mu> 1$ and some constant $c_0>0$ uniform with respect to $T$.
By (\ref{5}), (\ref{6}) and Young inequality, for any $\epsilon_1>0$, we obtain that
\begin{equation}\nonumber
\begin{split}
\int_0^T\mathcal{M}_0
\geq\ &(1-\epsilon_{1})|w_{xx}(0)|^2
+[c_0\lambda^4\mu^5 e^{4\mu(10m+1)}
-C_{\epsilon_{1}}\lambda^4\mu^4e^{4\mu(10m+1)}]|w(0)|^2.
\end{split}	
\end{equation}
Set $\epsilon_{1}=\frac{1}{2}$, there exists $\mu_{1}>1$, such that for all $\mu\geq\mu_{1}>1$, we get
\begin{equation}\nonumber
\begin{split}
\int_0^T\mathcal{M}_0
\geq\ &c_{1}|w_{xx}(0)|^2
+c_1\lambda^4\mu^5 e^{4\mu(10m+1)} |w(0)|^2,
\end{split}	
\end{equation}
for some constant $c_{1}>0$ only depending on $G$ and $G'$.
Hence, we have
\begin{equation}\label{19}
\begin{split}
\mathcal{J}_{1}
=\EE\int_{Q}\mathcal{M}_{0}\dx
\geq\ &c_{1}\EE\|w_{xx}(0)\|_{L^{2}(G)}^2
+c_1\lambda^4\mu^5 e^{4\mu(10m+1)}\EE \|w(0)\|_{L^{2}(G)}^2.
\end{split}	
\end{equation}

{\it Estimate for $\mathcal{J}_2$.}
By definition of $\mathcal{M}_{1}$ and (\ref{4}), we get
\begin{equation}\nonumber
\begin{split}
&\mathcal{M}_{1}
= |\dd w_{xx}|^2
+6\lambda^2\mu^2\xi^2\beta_{x}^2\dd w_{xx}\dd w
+\big(\lambda^4\mu^4\xi^4\beta_{x}^4
+\ell_{t}
-3(\lambda^2\mu^2\xi^2\beta_{x}^2)_{xx}\big)|\dd w|^2.
\end{split}
\end{equation}
Further, using Young inequality and the fact $|\varphi_{t}|\leq \lambda^3\mu^2\xi^5$ for $(t,x)\in (0,T)\times G$, for any $\epsilon_{2}>0$, we have
\begin{equation}\nonumber
\begin{split}
&\mathcal{M}_{1}
\geq (1-\epsilon_{2})|\dd w_{xx}|^2
-C_{\epsilon_{2}}\lambda^4\mu^4\xi^5|\dd w|^2.
\end{split}
\end{equation}
Set $\epsilon_{2}=\frac{1}{2}$, it is not difficult to see that
\begin{equation}\nonumber
\mathcal{M}_{1}
\geq
-C\lambda^4\mu^4\xi^5|\dd w|^2
= -C\theta^2\lambda^4\mu^4\xi^5|\dd r|^2,
\end{equation}
which imply that
\begin{equation}\label{20}
\mathcal{J}_{2}
=\EE\int_{Q}\mathcal{M}_{1}\dx
\geq -C\EE\int_{Q}\theta^2\lambda^4\mu^4\xi^5 |R|^2\dxt ,
\end{equation}
where we have used equation (\ref{equationr}).

{\it Estimate for $\mathcal{J}_3$.}
By expression of $\mathcal{A}$ and (\ref{4}), we obtain
\begin{equation}\label{7}
\begin{split}	
\mathcal{A}
=\ &(18\lambda\mu^2\xi\beta_{x}^2
+10\lambda\mu\xi\beta_{xx})|w_{xxx}|^2
\\
&+[28\lambda^3\mu^4\xi^3\beta_{x}^4
+132\lambda^3\mu^3\xi^3\beta_{x}^2\beta_{xx}
+\mathcal{O}(\lambda^2)\mu^4\xi^2
]|w_{xx}|^2
\\
&+[28\lambda^5\mu^6\xi^5\beta_{x}^6
-36\lambda^5\mu^5\xi^5\beta_{x}^4\beta_{xx}
+\mathcal{O}(\lambda^4)\mu^6\xi^4
+A_{1} ]|w_{x}|^2
\\
&+[20\lambda^7\mu^8\xi^7\beta_{x}^8
+28\lambda^7\mu^7\xi^7\beta_{x}^6\beta_{xx}
+\mathcal{O}(\lambda^6)\mu^8\xi^6
+A_{0}
+\ell_{tt}]|w|^2,
\end{split}	
\end{equation}
where
\begin{equation}\nonumber
\begin{split}
A_{1}=\ &(4\lambda^2\mu^2\xi\beta_{x}^2
-12\lambda^2\mu\xi\beta_{xx})\varphi\frac{\gamma_{t}}{\gamma}
-24\lambda^2\mu^2\xi^2\beta_{x}^2\frac{\gamma_{t}}{\gamma},
\\
A_{0}=\ &[4\lambda^4\mu^4\xi^3\beta_{x}^4
+12\lambda^4\mu^3\xi^3\beta_{x}^2\beta_{xx}
+\mathcal{O}(\lambda^3)\mu^4\xi^2
]\varphi\frac{\gamma_{t}}{\gamma}
\\
&+[8\lambda^4\mu^4\xi^4\beta_{x}^{4}
+\mathcal{O}(\lambda^2)\mu^4\xi^2]\frac{\gamma_{t}}{\gamma}.
\end{split}
\end{equation}
Further, we estimate the terms $A_{1}$, $A_{0}$ and $\ell_{tt}$ in (\ref{7}). First, from the definition of $\gamma$, it is clear that $A_{1}$ vanishes on $(T/4,T/2)$. For $t\in(T/2,T)$, we use the fact that there exists $C>0$ such that $|\gamma_t|\leq C|\gamma|^2$. Therefore, there exists a constant $C>0$ only depending on $G, G'$ such that
\begin{equation}\label{8}
|A_1|\leq C\lambda^2\mu^3\xi^3, \quad (t,x)\in (T/2,T)\times G ,	
\end{equation}
where we have used that $|\varphi\gamma|\leq \mu\xi^2$.
For $t\in(0,T/4)$, noting the facts that $\gamma_t\leq 0$, $\varphi\leq 0$ and $\gamma\in[1,2]$, it holds that
\begin{equation}\label{9}
	A_1\geq 4\lambda^2\mu^2\xi\beta_{x}^2 |\varphi|\frac{|\gamma_t|}{\gamma}
	-C\lambda^2\mu\xi|\varphi|\frac{|\gamma_t|}{\gamma},
	\quad (t,x)\in (0,T/4)\times G.
\end{equation}
Therefore, combining (\ref{8}) and (\ref{9}), we obtain that
\begin{equation}\label{14}
\begin{split}
&\EE\int_{Q}A_{1}|w_{x}|^2\dxt
\geq
\EE\int_{0}^{T/4}\int_{G}4\lambda^2\mu^2\xi\beta_{x}^2 |\varphi|\frac{|\gamma_t|}{\gamma}|w_{x}|^2\dxt
\\
	&\ -C\EE\int_{0}^{T/4}\int_{G}\lambda^2\mu\xi|\varphi|\frac{|\gamma_t|}{\gamma}|w_{x}|^2\dxt
-C\EE\int_{0}^{T}\int_{G}\lambda^2\mu^3\xi^3|w_{x}|^2\dxt.
\end{split}	
\end{equation}

By the same analysis, we obtain that $A_{0}$ vanishes on $(T/4,T/2)$. For $t\in(T/2,T)$,  it holds that
\begin{equation}\label{10}
|A_0|\leq C\lambda^4\mu^5\xi^5, \quad (t,x)\in (T/2,T)\times G.
\end{equation}
For $t\in(0,T/4)$, noting the facts that $\mu\xi\leq|\varphi|$, $\gamma_t\leq 0$, $\varphi\leq 0$ and $\gamma\in[1,2]$, it can be readily obtained that
\begin{equation}\label{11}
A_{0}\geq 4\lambda^4\mu^4\xi^3\beta_{x}^4|\varphi|\frac{|\gamma_{t}|}{\gamma}
-C(\lambda^4\mu^3\xi^3+\lambda^3\mu^4\xi^2+\lambda^2\mu^3\xi)|\varphi|\frac{|\gamma_{t}|}{\gamma},\quad (t,x)\in (0,T/4)\times G.
\end{equation}
Therefore, combining (\ref{10}) and (\ref{11}), it holds that
\begin{equation}\label{15}
\begin{split}
\EE\int_{Q}A_{0}|w|^2\dxt
\geq \ &
\EE\int_{0}^{T/4}\int_{G}4\lambda^4\mu^4\xi^3\beta_{x}^4|\varphi|\frac{|\gamma_{t}|}{\gamma}|w|^2\dxt
-C\EE\int_{Q}\lambda^4\mu^5\xi^5|w|^2\dxt
\\
&-C\EE\int_{0}^{T/4}\int_{G}(\lambda^4\mu^3\xi^3+\lambda^3\mu^4\xi^2+\lambda^2\mu^3\xi)|\varphi|\frac{|\gamma_{t}|}{\gamma}|w|^2\dxt.
\end{split}	
\end{equation}

For the term $\ell_{tt}$, we argue as follows. First, it is clear that $\ell_{tt}$ vanishes on $(T/4,T/2)$. For $t\in (0,T/4)$, using the definition of $\gamma$, it is easy to see that
$|\gamma_{tt}|\leq C\lambda^6\mu^8e^{2\mu(30m-6)}$, thus
\begin{equation}\label{12}
\begin{split}
|\ell_{tt}|=\lambda|\gamma_{tt}||\alpha(x)|
\leq \ &C \lambda^6\mu^9e^{2\mu(30m-6)} e^{\mu(10m+10)}
\\
\leq \ &C\lambda^6\mu^9\xi^7 e^{-2\mu}
\leq C\lambda^6\mu^6\xi^7,
\end{split}
\end{equation}
where we have used that $\mu^3e^{-2\mu}<\frac{1}{2}$ for all $\mu>1$.
For $t\in (T/2,T)$, noting the facts that $|\gamma_{tt}|\leq C\gamma^3$ and $|\varphi\gamma|\leq \mu\xi^2$, we get
\begin{equation}\label{13}
|\ell_{tt}|= \lambda\frac{|\gamma_{tt}|}{\gamma}|\varphi|
\leq C\lambda\mu\xi^{3}.
\end{equation}
Hence, using (\ref{12}) and (\ref{13}), we obtain that
\begin{equation}\label{16}
\begin{split}
\EE\int_{Q}\ell_{tt}|w|^2\dxt
\geq
-C\EE\int_{Q}(\lambda^6\mu^6\xi^7
+\lambda\mu\xi^{3})|w|^2\dxt.
\end{split}	
\end{equation}

Integrating (\ref{7}) in $Q$, taking mathematical expectation on both sides, using (\ref{14}), (\ref{15}), (\ref{16}), we conclude that
\begin{equation}\label{21}
\begin{split}
\mathcal{J}_{3}
=\ &\EE\int_{Q}\mathcal{A}\dxt
\\
\geq\ &18\EE\int_{Q}\lambda\mu^2\xi\beta_{x}^2|w_{xxx}|^2\dxt
-C\EE\int_{Q}\lambda\mu\xi|w_{xxx}|^2\dxt
\\
&+28\EE\int_{Q}\lambda^3\mu^4\xi^3\beta_{x}^4|w_{xx}|^2\dxt
-C\EE\int_{Q}(\lambda^3\mu^3\xi^3
+\lambda^2\mu^4\xi^2)
|w_{xx}|^2\dxt
\\
&+28\EE\int_{Q}\lambda^5\mu^6\xi^5\beta_{x}^6|w_{x}|^2\dxt
-C\EE\int_{Q}(\lambda^5\mu^5\xi^5+\lambda^4\mu^6\xi^4)
]|w_{x}|^2\dxt
\\
&+20\EE\int_{Q}\lambda^7\mu^8\xi^7\beta_{x}^8|w|^2\dxt
-C\EE\int_{Q}(\lambda^7\mu^7\xi^7
+\lambda^6\mu^8\xi^6)
|w|^2\dxt
\\
&
+4\EE\int_{0}^{T/4}\int_{G}\lambda^2\mu^2\xi\beta_{x}^2 |\varphi|\frac{|\gamma_t|}{\gamma}|w_{x}|^2\dxt
-C\EE\int_{0}^{T/4}\int_{G}\lambda^2\mu\xi|\varphi|\frac{|\gamma_t|}{\gamma}|w_{x}|^2\dxt
\\
&
+4\EE\int_{0}^{T/4}\int_{G}\lambda^4\mu^4\xi^3\beta_{x}^4|\varphi|\frac{|\gamma_{t}|}{\gamma}|w|^2\dxt
\\
&-C\EE\int_{0}^{T/4}\int_{G}(\lambda^4\mu^3\xi^3+\lambda^3\mu^4\xi^2)|\varphi|\frac{|\gamma_{t}|}{\gamma}|w|^2\dxt.
\end{split}	
\end{equation}

{\it Estimate for $\mathcal{J}_4$.}
Noting that boundary conditions of $r$ in (\ref{equationr}) and $w=\theta r$, we get
\begin{equation}\label{17}
\begin{split}
&w(0,t)=w(1,t)
=w_{x}(0,t)=w_{x}(1,t)=0,\quad
t\in (0,T).	
\end{split}
\end{equation}
By the definition of ${\mathcal{B}}$, using (\ref{17}), (\ref{4}) and Cauchy inequality, we obtain that
\begin{equation}\nonumber
\begin{split}
\mathcal{J}_{4}
=\ &\EE\int_{Q}\mathcal{B}\dx
\\
\geq\ &-4\EE\intt
 \lambda\mu\xi(1)\beta_{x}(1) |w_{xxx}(1)|^2\dt
 +4\EE\intt
 \lambda\mu\xi(0)\beta_{x}(0) |w_{xxx}(0)|^2\dt
 \\
 &-20\EE\intt \lambda^3\mu^3\xi^3(1)\beta_{x}^3(1)|w_{xx}(1)|^2 \dt
+20\EE\intt \lambda^3\mu^3\xi^3(0)\beta_{x}^3(0)|w_{xx}(0)|^2 \dt
\\
&-C\EE\intt
 \mu |w_{xxx}(1)|^2\dt
-C\EE\intt \lambda^2\mu^3\xi^2(1)|w_{xx}(1)|^2 \dt
\\
&-C\EE\intt
 \mu |w_{xxx}(0)|^2\dt
-C\EE\intt \lambda^2\mu^3\xi^2(0)|w_{xx}(0)|^2 \dt.
\end{split}	
\end{equation}
Further, noting that $\beta_{x}(1)<0$ and $\beta_{x}(0)>0$, using (\ref{beta}), we get
\begin{equation}\label{22}
\begin{split}
\mathcal{J}_{4}
=\ &\EE\int_{Q}\mathcal{B}\dx
\\
\geq\ &4\alpha_{0} \EE\intt
 \lambda\mu\xi(1) |w_{xxx}(1)|^2\dt
 +4\alpha_{0}\EE\intt
 \lambda\mu\xi(0) |w_{xxx}(0)|^2\dt
 \\
 &+20\alpha_{0}^3\EE\intt \lambda^3\mu^3\xi^3(1)|w_{xx}(1)|^2 \dt
+20\alpha_{0}^3\EE\intt \lambda^3\mu^3\xi^3(0)|w_{xx}(0)|^2 \dt
\\
&-C\EE\intt
 \mu |w_{xxx}(1)|^2\dt
-C\EE\intt \lambda^2\mu^3\xi^2(1)|w_{xx}(1)|^2 \dt
\\
&-C\EE\intt
 \mu |w_{xxx}(0)|^2\dt
-C\EE\intt \lambda^2\mu^3\xi^2(0)|w_{xx}(0)|^2 \dt.
\end{split}	
\end{equation}

{\it Estimate for $\mathcal{J}_5$.}
By the definition of $\mathcal{P}_{3}$ and (\ref{18}), we have
\begin{equation}\nonumber
\begin{split}
\EE\int_{Q}|\mathcal{P}_3|^2\dxt
\leq \ &C\EE\int_{Q}\lambda^2\mu^4\xi^2 |w_{xx}|^2\dxt
\\
&+C\EE\int_{Q}\lambda^2\mu^6\xi^2 |w_{x}|^2\dxt
+ C\EE\int_{Q}\lambda^6\mu^8\xi^6|w|^2\dxt.
\\	
\end{split}
\end{equation}
Hence, it is not difficult to see that
\begin{equation}\label{23}
\begin{split}
\mathcal{J}_{5}
=\ &2\EE\int_{Q}\mathcal{P}_2\mathcal{P}_3\dxt
\\	
\geq\ &
 -\EE\int_{Q}|\mathcal{P}_2|^2\dxt
-\EE\int_{Q}|\mathcal{P}_3|^2\dxt
\\
\geq\ & -\EE\int_{Q}|\mathcal{P}_2|^2\dxt
-C\EE\int_{Q}\lambda^2\mu^4\xi^2 |w_{xx}|^2\dxt
\\
&-C\EE\int_{Q}\lambda^2\mu^6\xi^2 |w_{x}|^2\dxt
- C\EE\int_{Q}\lambda^6\mu^8\xi^6|w|^2\dxt.
\end{split}
\end{equation}

{\it Step 3.}
Combining (\ref{19}), (\ref{20}), (\ref{21}), (\ref{22}), (\ref{23}) with  (\ref{3}), we conclude that
\begin{equation}\label{24}
\begin{split}
&\lambda^4\mu^5 e^{4\mu(10m+1)}\EE \|w(0,\cdot)\|_{L^{2}(G)}^2
+\EE\int_{Q}\lambda\mu^2\xi\beta_{x}^2|w_{xxx}|^2\dxt
\\
&+\EE\int_{Q}\lambda^3\mu^4\xi^3\beta_{x}^4|w_{xx}|^2\dxt
+\EE\int_{Q}\lambda^5\mu^6\xi^5\beta_{x}^6|w_{x}|^2\dxt
\\
&+\EE\int_{Q}\lambda^7\mu^8\xi^7\beta_{x}^8|w|^2\dxt
+\EE\int_{0}^{T/4}\int_{G}\lambda^2\mu^2\xi\beta_{x}^2 |\varphi||\gamma_t||w_{x}|^2\dxt
\\
&
+\EE\int_{0}^{T/4}\int_{G}\lambda^4\mu^4\xi^3\beta_{x}^4|\varphi||\gamma_t||w|^2\dxt
+ \EE\intt
 \lambda\mu\xi(t,1) |w_{xxx}(t,1)|^2\dt
\\
&+\EE\intt
 \lambda\mu\xi(t,0) |w_{xxx}(t,0)|^2\dt
+\EE\intt \lambda^3\mu^3\xi^3(t,1)|w_{xx}(t,1)|^2 \dt
\\
&+\EE\intt \lambda^3\mu^3\xi^3(t,0)|w_{xx}(t,0)|^2 \dt
+\EE\int_{Q}|\mathcal{P}_2|^2\dxt
	\\
\leq\
&C\EE\int_{Q}\mathcal{P}_2(\theta \mathcal{L}r)\dx
+C\EE\int_{Q}\theta^2\lambda^4\mu^4\xi^5 |R|^2\dxt
+C\mathcal{X}_{1},
\end{split}	
\end{equation}
where
\begin{equation}\nonumber
\begin{split}
\mathcal{X}_{1}
=\ &\EE\int_{Q}\lambda\mu\xi|w_{xxx}|^2\dxt
+\EE\int_{Q}(\lambda^3\mu^3\xi^3
+\lambda^2\mu^4\xi^2)
|w_{xx}|^2\dxt
\\
&+\EE\int_{Q}(\lambda^5\mu^5\xi^5+\lambda^4\mu^6\xi^4)
]|w_{x}|^2\dxt
+\EE\int_{Q}(\lambda^7\mu^7\xi^7
+\lambda^6\mu^8\xi^6)
|w|^2\dxt
\\
&+\EE\int_{0}^{T/4}\int_{G}\lambda^2\mu\xi|\varphi||\gamma_t||w_{x}|^2\dxt
\\
&+\EE\int_{0}^{T/4}\int_{G}(\lambda^4\mu^3\xi^3+\lambda^3\mu^4\xi^2)|\varphi||\gamma_t||w|^2\dxt
\\
&+\EE\intt
 \mu |w_{xxx}(t,1)|^2\dt
+\EE\intt \lambda^2\mu^3\xi^2(t,1)|w_{xx}(t,1)|^2 \dt
\\
&+\EE\intt
 \mu |w_{xxx}(t,0)|^2\dt
+\EE\intt \lambda^2\mu^3\xi^2(t,0)|w_{xx}(t,0)|^2 \dt
\end{split}	
\end{equation}
and for some $C > 0$ only depending on $G$, $G'$ and $\alpha_{0}$.
Using Young inequality and (\ref{equationr}), noting that $\phi_{1}\equiv 0$, $\phi_{2}\equiv 0$ in (\ref{equationr}), for any $\epsilon_{3}>0$, we have
\begin{equation}\label{25}
\begin{split}
\EE\int_{Q}\mathcal{P}_2(\theta \mathcal{L}r)\dx
\leq
\epsilon_{3}\EE\int_{Q}|\mathcal{P}_2|^2\dxt
+C_{\epsilon_{3}}\EE\int_{Q}\theta^2|\phi_{0}|^2\dxt .
\end{split}
\end{equation}
Set $\epsilon_{3}=\frac{1}{2}$, noting that $\inf_{\overline{G\backslash G' }}\{|\beta_{x}(x)|\}\geq \alpha_{0}$, we can combine estimate (\ref{24}) with  (\ref{25}) to deduce
\begin{equation}\nonumber
\begin{split}
&\lambda^4\mu^5 e^{4\mu(10m+1)}\EE \|w(0,\cdot)\|_{L^{2}(G)}^2
+\EE\int_{Q}\lambda\mu^2\xi|w_{xxx}|^2\dxt
\\
&+\EE\int_{Q}\lambda^3\mu^4\xi^3|w_{xx}|^2\dxt
+\EE\int_{Q}\lambda^5\mu^6\xi^5|w_{x}|^2\dxt
\\
&+\EE\int_{Q}\lambda^7\mu^8\xi^7|w|^2\dxt
+\EE\int_{0}^{T/4}\int_{G}\lambda^2\mu^2\xi |\varphi||\gamma_t||w_{x}|^2\dxt
\\
&+\EE\int_{0}^{T/4}\int_{G}\lambda^4\mu^4\xi^3|\varphi||\gamma_t||w|^2\dxt
+\EE\intt
 \lambda\mu\xi(t,1) |w_{xxx}(t,1)|^2\dt
\\
&+\EE\intt
 \lambda\mu\xi(t,0) |w_{xxx}(t,0)|^2\dt
+\EE\intt \lambda^3\mu^3\xi^3(t,1)|w_{xx}(t,1)|^2 \dt
\\
&+\EE\intt \lambda^3\mu^3\xi^3(t,0)|w_{xx}(t,0)|^2 \dt
+\EE\int_{Q}|\mathcal{P}_2|^2\dxt
\\
\leq\
&C\EE\int_{Q}\theta^2|\phi_{0}|^2\dxt
+C\EE\int_{Q}\theta^2\lambda^4\mu^4\xi^5 |R|^2\dxt
+C\mathcal{X}_{1}
+C\mathcal{X}_{2},
\end{split}
\end{equation}
where
\begin{equation}\nonumber
\begin{split}
\mathcal{X}_{2}
=\ &\EE\intt\int_{G'}\lambda\mu^2\xi|w_{xxx}|^2\dxt
+\EE\intt\int_{G'}\lambda^3\mu^4\xi^3|w_{xx}|^2\dxt
\\
&+\EE\intt\int_{G'}\lambda^5\mu^6\xi^5|w_{x}|^2\dxt
+\EE\intt\int_{G'}\lambda^7\mu^8\xi^7|w|^2\dxt
\\
&+\EE\int_{0}^{T/4}\int_{G'}\lambda^2\mu^2\xi |\varphi||\gamma_t||w_{x}|^2\dxt
+\EE\int_{0}^{T/4}\int_{G'}\lambda^4\mu^4\xi^3|\varphi||\gamma_t||w|^2\dxt
\end{split}	
\end{equation}
and $C > 0$ only depending on $G$, $G'$ and $\alpha_{0}$. We observe that, unlike the traditional Carleman estimate with weight vanishing at $t=0$ and $t=T$, we have six local integrals, two of those being only for $t\in(0,T/4)$. We will handle this in the next step.

Noting that all of the terms in $\mathcal{X}_{1}$ have lower powers of $\lambda$ and $\mu$, thus we obtain that there exists a constant $\mu_{2}>\mu_{1}$ such that for all $\mu\geq \mu_{2}$, there exists a constant $\lambda_{1}=\lambda_{1}(\mu)$, such that for any $\lambda>\lambda_{1}$, it holds that
\begin{equation}\label{26}
\begin{split}
&\lambda^4\mu^5 e^{4\mu(10m+1)}\EE \| w(0,\cdot)\|_{L^{2}(G)}^2
+\EE\int_{Q}\lambda\mu^2\xi|w_{xxx}|^2\dxt
\\
&+\EE\int_{Q}\lambda^3\mu^4\xi^3|w_{xx}|^2\dxt
+\EE\int_{Q}\lambda^5\mu^6\xi^5|w_{x}|^2\dxt
\\
&+\EE\int_{Q}\lambda^7\mu^8\xi^7|w|^2\dxt
+\EE\int_{0}^{T/4}\int_{G}\lambda^2\mu^2\xi |\varphi||\gamma_t||w_{x}|^2\dxt
\\
&+\EE\int_{0}^{T/4}\int_{G}\lambda^4\mu^4\xi^3|\varphi||\gamma_t||w|^2\dxt
+\EE\intt
 \lambda\mu\xi(t,1) |w_{xxx}(t,1)|^2\dt
\\
&+\EE\intt
 \lambda\mu\xi(t,0) |w_{xxx}(t,0)|^2\dt
+\EE\intt \lambda^3\mu^3\xi^3(t,1)|w_{xx}(t,1)|^2 \dt
\\
&+\EE\intt \lambda^3\mu^3\xi^3(t,0)|w_{xx}(t,0)|^2 \dt
+\EE\int_{Q}|\mathcal{P}_2|^2\dxt
\\
\leq\
&C\EE\int_{Q}\theta^2|\phi_{0}|^2\dxt
+C\EE\int_{Q}\theta^2\lambda^4\mu^4\xi^5 |R|^2\dxt
+C\mathcal{X}_{2}.
\end{split}	
\end{equation}

{\it Step 4.}
The last step is to remove the local term containing the gradient of the solution and the local term in $(0,T/4)$ and return to the original variable.

First, using $r=\theta^{-1}w$,  it is not difficult to see that
\begin{equation}\nonumber
\begin{split}
&\theta^2|r_{xxx}|^2
\leq C(|w_{xxx}|^2
+\lambda^2\mu^2\xi^2 |w_{xx}|^2
+\lambda^4\mu^4\xi^4 |w_{x}|^2
+\lambda^6\mu^6\xi^6 |w|^2),
\\
&\theta^2|r_{xx}|^2
\leq C(|w_{xx}|^2
+\lambda^2\mu^2\xi^2 |w_{x}|^2
+\lambda^4\mu^4\xi^4 |w|^2),
\\
&\theta^2|r_{x}|^2
\leq C(|w_{x}|^2
+\lambda^2\mu^2\xi^2 |w|^2)
\end{split}	
\end{equation}
for some $C>0$ only depending on $G$ and $G'$, hence from (\ref{26}), after removing some unnecessary terms, we obtain that
\begin{equation}\label{27}
\begin{split}
&\lambda^4\mu^5 e^{4\mu(10m+1)}\EE \|\theta(0,\cdot) r(0,\cdot)\|_{L^{2}(G)}^2
+\EE\int_{Q}\theta^2\lambda\mu^2\xi|r_{xxx}|^2\dxt
\\
&+\EE\int_{Q}\theta^2\lambda^3\mu^4\xi^3|r_{xx}|^2\dxt
+\EE\int_{Q}\theta^2\lambda^5\mu^6\xi^5|r_{x}|^2\dxt
\\
&+\EE\int_{Q}\theta^2\lambda^7\mu^8\xi^7|r|^2\dxt
+\EE\int_{0}^{T/4}\int_{G}\theta^2\lambda^2\mu^2\xi |\varphi||\gamma_t||r_{x}|^2\dxt
\\
&+\EE\int_{0}^{T/4}\int_{G}\theta^2\lambda^4\mu^4\xi^3|\varphi||\gamma_t||r|^2\dxt
\\
\leq\
&C\EE\int_{Q}\theta^2|\phi_{0}|^2\dxt
+C\EE\int_{Q}\theta^2\lambda^4\mu^4\xi^5 |R|^2\dxt
+C\mathcal{X}_{3},
\end{split}	
\end{equation}
where
\begin{equation}\nonumber
\begin{split}
\mathcal{X}_{3}
=\ &\EE\intt\int_{G'}\theta^2\lambda\mu^2\xi|r_{xxx}|^2\dxt
+\EE\intt\int_{G'}\theta^2\lambda^3\mu^4\xi^3|r_{xx}|^2\dxt
\\
&+\EE\intt\int_{G'}\theta^2\lambda^5\mu^6\xi^5|r_{x}|^2\dxt
+\EE\intt\int_{G'}\theta^2\lambda^7\mu^8\xi^7|r|^2\dxt
\\
&+\EE\int_{0}^{T/4}\int_{G'}\theta^2\lambda^2\mu^2\xi |\varphi||\gamma_t||r_{x}|^2\dxt
+\EE\int_{0}^{T/4}\int_{G'}\theta^2\lambda^4\mu^4\xi^3|\varphi||\gamma_t||r|^2\dxt.
\end{split}	
\end{equation}

Next, we choose a cut-off function $\eta\in C_{0}^{\infty}(G)$ such that
\begin{equation}\nonumber
0\leq \eta \leq 1,\quad
\eta\equiv 1\ \mbox{in}\ G',\quad
\eta\equiv 0\ \mbox{in}\ G\backslash G_{0},
\end{equation}
with the additional characteristic
$
{|\eta_{x}|}/{\eta^{\frac{1}{2}}}\in L^{\infty}(G).
$
By It\^o's formula, we calculate
\begin{equation}\nonumber
\begin{split}
\theta^2\xi\eta r_{xx}\dd r
=\ &(\theta^2\xi\eta r_{x}\dd r)_{x}
-(\theta^2\xi\eta)_{x}r_{x}\dd r
-\theta^2\xi\eta r_{x}\dd r_{x}
\\
=\ &(\theta^2\xi\eta r_{x}\dd r)_{x}
-(\theta^2\xi\eta)_{x}r_{x}\dd r
-\frac{1}{2}\dd(\theta^2\xi\eta r_{x}^2)
\\
&+\frac{1}{2}(\theta^2\xi)_{t}\eta r_{x}^2\dt
+\frac{1}{2}\theta^2\xi\eta|\dd r_{x}|^2
\end{split}
\end{equation}
and thus, using the equation (\ref{equationr}) and $\eta\in C^{\infty}_{0}(G)$, integrating it over $Q$ and taking the mathematical expectation, we have
\begin{equation}\label{28}
\begin{split}
& 2\EE\int_{Q} \theta^2\xi\eta|r_{xxx}|^2\dxt
+\EE\int_{Q} (\theta^2\xi\eta)_{xxxx}|r_{x}|^2\dxt
+\EE\int_{Q} (\theta^2\xi)_{t}\eta |r_{x}|^2\dxt
\\
&+\int_{G}\theta^{2}(0,x)\xi(0,x)\eta(x) |r_{x}(0,x)|^2\dx
+\EE\int_{Q} \theta^2\xi\eta |R_{x}|^2\dxt
\\
=\ & 5\EE\int_{Q} (\theta^2\xi\eta)_{xx}|r_{xx}|^2\dxt
+2\EE\int_{Q} \theta^2\xi\eta  r_{xx}\phi_{0}\dxt
+2\EE\int_{Q} (\theta^2\xi\eta)_{x}r_{x}\phi_{0}\dt  .
\end{split}
\end{equation}

We readily see that the last two terms on the left-hand side of (\ref{28}) are positive, so they can be dropped. Also, notice that using the properties of $\eta$ and simple calculation, the first and second terms give the local terms containing $r_{xxx}$ and $r_{x}$ and other terms with lower powers of $\lambda$ and $\mu$. For the third term on the left-hand side of (\ref{28}), similar to step 2 above, we can analyze it on different time intervals. In summary, multiplying  both sides of (\ref{28}) by $\lambda\mu^2$, by the properties of $\eta$ and simple calculation, we have
\begin{equation}\label{29}
\begin{split}
& \EE\intt\int_{G'} \theta^2\lambda\mu^2\xi|r_{xxx}|^2\dxt
+\EE\intt\int_{G'}\theta^2\lambda^5\mu^6\xi^5|r_{x}|^2\dxt
\\
&+\EE\int_{0}^{T/4}\int_{G'}\theta^2\lambda^2\mu^2\xi|\varphi||\gamma_{t}| |r_{x}|^2\dxt
\\
\leq
\ &C\bigg[\EE\int_{Q}\theta^2\lambda^2\mu^4\xi^2|r_{xx}|^2\dxt
+\EE\int_{Q}\theta^2\lambda^4\mu^6\xi^4|r_{x}|^2\dxt
\\
&+\EE\int_{0}^{T/4}\int_{G}\theta^2\lambda\mu^2\xi|\varphi||\gamma_{t}| |r_{x}|^2\dxt\bigg]
+C\EE\int_{Q}\theta^2\lambda^3\mu^4\xi^3\eta|r_{xx}|^2\dxt
\\
&+C\EE\int_{Q}\theta^2|\phi_{0}|^2\dxt.
\end{split}
\end{equation}

In addition, by It\^o's formula, we calculate
\begin{equation}\nonumber
\begin{split}
\theta^2\xi^3\eta r\dd r
=
\frac{1}{2}\dd(\theta^2\xi^3\eta  r^2)
-\frac{1}{2}(\theta^2\xi^3)_{t}\eta r^2\dt
-\frac{1}{2}\theta^2\xi^3\eta|\dd r|^2.
\end{split}	
\end{equation}
Also using the equation (\ref{equationr}) and $\eta\in C^{\infty}_{0}(G)$, integrating the above expression on $Q$ and taking the mathematical expectation, we obtain that
\begin{equation}\label{30}
\begin{split}
&2\EE \int_{Q}\theta^2\xi^3\eta |r_{xx}|^2\dxt
+\EE\int_{Q}(\theta^2\xi^3)_{t}\eta r^2\dxt
+\EE \int_{Q}(\theta^2\xi^3\eta)_{xxxx}|r|^2\dxt
\\
&+\EE\int_{Q}\theta^2\xi^3\eta |R|^2\dxt
+\EE\int_{G}\theta^2(0,x)\xi^3(0,x)\eta  |r(0,x)|^2\dx
\\
=\ &4\EE \int_{Q}(\theta^2\xi^3\eta)_{xx}|r_{x}|^2\dxt
 -2\EE \int_{Q}\theta^2\xi^3\eta r\phi_{0}\dxt.
\end{split}	
\end{equation}

We immediately see that the last three terms on the left-hand side of (\ref{30}) are positive, so they can be dropped.
Similar to the analysis of (\ref{28}), multiplying both sides of (\ref{30}) by $\lambda^3\mu^4$, by the properties of $\eta$ and simple estimate, we obtain that
\begin{equation}\label{31}
\begin{split}
&\EE \int_{Q}\theta^2\lambda^3\mu^4\xi^3\eta |r_{xx}|^2\dxt
+\EE \intt\int_{G'}\theta^2\lambda^3\mu^4\xi^3 |r_{xx}|^2\dxt
\\
&+\EE\int_{0}^{T/4}\int_{G'}\theta^2\lambda^4\mu^4\xi^3|\gamma_{t}||\varphi|r^2\dxt
\\
\leq \ &
C\EE\int_{Q}\theta^2\lambda^5\mu^6\xi^5\eta|r_{x}|^2\dxt
+C\EE\int_{Q}\theta^2|\phi_{0}|^2\dxt
\\
&+C\bigg[\EE\int_{Q}\theta^2\lambda^4\mu^6\xi^4|r_{x}|^2\dxt
+\EE\int_{Q}\theta^2\lambda^6\mu^8\xi^6|r|^2\dxt \bigg].
\end{split}	
\end{equation}
By integration by parts,
\begin{equation}\label{32}
\begin{split}
\EE\int_{Q}\theta^2\lambda^5\mu^6\xi^5\eta |r_{x}|^2\dxt
=
-\EE\int_{Q}\theta^2\lambda^5\mu^6\xi^5\eta r_{xx}r\dxt
+\frac{1}{2}\EE\int_{Q}(\theta^2\lambda^5\mu^6\xi^5\eta)_{xx}r^2\dxt .
\end{split}	
\end{equation}
Using Young inequality, the properties of $\eta$ and simple estimate, for any $\epsilon_{4}>0$, (\ref{32}) deduce to
\begin{equation}\label{33}
\begin{split}
&\EE\int_{Q}\theta^2\lambda^5\mu^6\xi^5\eta |r_{x}|^2\dxt
\leq \epsilon_{4}\EE\int_{Q}\theta^2\lambda^3\mu^4\xi^3\eta |r_{xx}|^2\dxt
\\
&\quad +C_{\epsilon_{4}}\EE\intt\int_{G_{0}}\theta^2\lambda^7\mu^8\xi^7 r^2\dxt
+C\EE\int_{Q}\theta^2\lambda^6\mu^8\xi^6 r^2\dxt.
\end{split}
\end{equation}
Set $\epsilon_{4}=\frac{1}{2}$, by (\ref{31}) and (\ref{33}), we obtain
\begin{equation}\label{34}
\begin{split}
	&\EE \int_{Q}\theta^2\lambda^3\mu^4\xi^3\eta |r_{xx}|^2\dxt
+\EE \intt\int_{G'}\theta^2\lambda^3\mu^4\xi^3 |r_{xx}|^2\dxt
\\
&+\EE\int_{0}^{T/4}\int_{G'}\theta^2\lambda^4\mu^4\xi^3|\gamma_{t}||\varphi|r^2\dxt
\\
\leq \ &
C\EE\intt\int_{G_{0}}\theta^2\lambda^7\mu^8\xi^7 r^2\dxt
+C\EE\int_{Q}\theta^2|\phi_{0}|^2\dxt
\\
&+C\bigg[\EE\int_{Q}\theta^2\lambda^4\mu^6\xi^4|r_{x}|^2\dxt
+\EE\int_{Q}\theta^2\lambda^6\mu^8\xi^6|r|^2\dxt \bigg].
\end{split}	
\end{equation}

Combining (\ref{34}) and (\ref{29}), we conclude that
\begin{equation}\label{35}
\begin{split}
& \EE\intt\int_{G'} \theta^2\lambda\mu^2\xi|r_{xxx}|^2\dxt
+\EE\intt\int_{G'}\theta^2\lambda^5\mu^6\xi^5|r_{x}|^2\dxt
\\
&+\EE\int_{0}^{T/4}\int_{G'}\theta^2\lambda^2\mu^2\xi|\varphi||\gamma_{t}| |r_{x}|^2\dxt
\\
&+\EE \intt\int_{G'}\theta^2\lambda^3\mu^4\xi^3 |r_{xx}|^2\dxt
+\EE\int_{0}^{T/4}\int_{G'}\theta^2\lambda^4\mu^4\xi^3|\gamma_{t}||\varphi|r^2\dxt
\\
\leq
\ &
C\EE\intt\int_{G_{0}}\theta^2\lambda^7\mu^8\xi^7 r^2\dxt
+C\EE\int_{Q}\theta^2|\phi_{0}|^2\dxt
\\
&+C\bigg[\EE\int_{Q}\theta^2\lambda^2\mu^4\xi^2|r_{xx}|^2\dxt
+\EE\int_{Q}\theta^2\lambda^4\mu^6\xi^4|r_{x}|^2\dxt
\\
&+\EE\int_{0}^{T/4}\int_{G}\theta^2\lambda\mu^2\xi|\varphi||\gamma_{t}| |r_{x}|^2\dxt
+\EE\int_{Q}\theta^2\lambda^6\mu^8\xi^6|r|^2\dxt\bigg] .
\end{split}
\end{equation}

Using (\ref{27}) and (\ref{35}), we obtain that
\begin{equation}\label{36}
\begin{split}
&\lambda^4\mu^5 e^{4\mu(10m+1)}\EE \|\theta(0,\cdot) r(0,\cdot)\|_{L^{2}(G)}^2
+\EE\int_{Q}\theta^2\lambda\mu^2\xi|r_{xxx}|^2\dxt
\\
&+\EE\int_{Q}\theta^2\lambda^3\mu^4\xi^3|r_{xx}|^2\dxt
+\EE\int_{Q}\theta^2\lambda^5\mu^6\xi^5|r_{x}|^2\dxt
\\
&+\EE\int_{Q}\theta^2\lambda^7\mu^8\xi^7|r|^2\dxt
+\EE\int_{0}^{T/4}\int_{G}\theta^2\lambda^2\mu^2\xi |\varphi||\gamma_t||r_{x}|^2\dxt
\\
&+\EE\int_{0}^{T/4}\int_{G}\theta^2\lambda^4\mu^4\xi^3|\varphi||\gamma_t||r|^2\dxt
\\
\leq\
&C\EE\int_{Q}\theta^2|\phi_{0}|^2\dxt
+C\EE\int_{Q}\theta^2\lambda^4\mu^4\xi^5 |R|^2\dxt
+C\EE\intt\int_{G_{0}}\theta^2\lambda^7\mu^8\xi^7 r^2\dxt
\\
&+C\bigg[\EE\int_{Q}\theta^2\lambda^2\mu^4\xi^2|r_{xx}|^2\dxt
+\EE\int_{Q}\theta^2\lambda^4\mu^6\xi^4|r_{x}|^2\dxt
\\
&+\EE\int_{0}^{T/4}\int_{G}\theta^2\lambda\mu^2\xi|\varphi||\gamma_{t}| |r_{x}|^2\dxt
+\EE\int_{Q}\theta^2\lambda^6\mu^8\xi^6|r|^2\dxt\bigg].
\end{split}	
\end{equation}

Finally, from (\ref{36}), we can see that there exist two constants $\mu_{0}>\mu_{2}$, $\lambda_{0}>\lambda_{1}$, such that (\ref{carest2}) holds for all $\mu\geq\mu_{0}$, $\lambda\geq\lambda_{0}$.
Thus, the proof of Theorem \ref{carle2} is complete.
\end{proof}

To prove the Carleman estimates in Theorem \ref{carle1}, let us introduce the following auxiliary control problem:
\begin{equation}\label{equationh}
	\left\{
		\begin{aligned}
	&\dd h+h_{xxxx}\dt
	=(\lambda^7\mu^8\xi^7\theta^2r+\chi_{G_0}v)\dt+(\lambda^4\mu^4\xi^5\theta^2R+V)\dd B(t) &\textup{in}\ &Q,\\
    &h(t,0)=h(t,1)=0 &\textup{in}\ &(0,T),\\
    &h_{x}(t,0)=h_{x}(t,1)=0 &\textup{in}\ &(0,T),\\
    &h(0)=h_0 &\textup{in}\ &G,
         \end{aligned}
    \right.
\end{equation}
where $h=h(t,x)$ denotes the state variable associated to the initial state $h_{0}\in L^2_{\mathcal{F}_{0}}(\Omega;L^2(G))$, the control pair $(v,V)\in L^2_{\mathbb{F}}(0,T;L^2(G_{0}))\times L^2_{\mathbb{F}}(0,T;L^2(G))$ and $(r,R)\in \mathcal{H}_{T}\times L^2_{\mathbb{F}}(0,T;L^2(G))$ is the unique solution of (\ref{equationr}) with respect to $r_{T}\in L^2_{\mathcal{F}_{T}}(\Omega;L^2(G))$.
Observe that given the aforementioned regularity on the controls and source term, one can easily show that system $(\ref{equationh})$ admits a unique solution $h\in \mathcal{H}_{T}$ (see e.g., \cite[Proposition 2.3]{Gao2015Observability}).

By Theorem \ref{carle2}, we have the following controllability result of the system (\ref{equationh}).
\begin{lemma}\label{carle3}
Let $(r,R)$ be the unique solution to the backward stochastic fourth order parabolic equation $(\ref{equationr})$, with respect to the terminal state $r_{T}\in L^2_{\mathcal{F}_{T}}(\Omega;H_0^2(G))$. Then there exists a control pair $(\hat v,\hat V)\in L^2_{\mathbb{F}}(0,T;L^2(G_{0}))\times L^2_{\mathbb{F}}(0,T;L^2(G))$, such that the corresponding solution $\hat h$ to $(\ref{equationh})$ verifies $\hat y(T)=0$ in $G$, $\mathbb{P}$-a.s. Moreover, one can find two positive constants $\lambda_{0}$, $\mu_{0}$, such that
\begin{equation}\label{carest3}
\begin{split}
&\EE\intt\int_{G_{0}}\lambda^{-7}\mu^{-8}\xi^{-7}\theta^{-2}|v|^2\dxt
	+\EE\int_{Q}\lambda^{-4}\mu^{-4}\xi^{-5}\theta^{-2}|V|^2\dxt
	\\
	 &+\EE\int_{Q}\theta^{-2}h^2\dxt
	 +\EE\int_{Q}\lambda^{-2}\mu^{-2}\xi^{-3}\theta^{-2}|h_{x}|^2\dxt
	 \\
	 &+\EE \int_{Q}\lambda^{-4}\mu^{-4}\xi^{-5}\theta^{-2}|h_{xx}|^2\dxt
	 \\
	 \leq
	 &C \bigg(
	 \EE\int_{G}\lambda^{-4}\mu^{-5}e^{-4\mu(10m+1)}\theta^{-2}(0)|h_{0}|^2\dx
	 +\EE\int_{Q} \lambda^7\mu^8\xi^7\theta^2 r^2\dxt
	 \\
	 &+\EE\int_{Q} \lambda^4\mu^4\xi^5\theta^2 R^2\dxt
	 \bigg),
\end{split}
\end{equation}
for all $\lambda\geq \lambda_{0}$ and $\mu\geq\mu_{0}$, where  $C>0$ only depends on $G$ and $G_0$.
\end{lemma}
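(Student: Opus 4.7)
My plan is to prove Lemma \ref{carle3} by the Hilbert Uniqueness Method (HUM) in the weighted framework dictated by Theorem \ref{carle2}: the control pair $(\hat v, \hat V)$ will be constructed as the duality image of the minimizer of a suitable quadratic functional on the terminal-data space of the adjoint of (\ref{equationh}).

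First, I would set up the duality identity. Let $(p,P)$ denote the source-free backward solution of $\dd p - p_{xxxx}\dt = P\dd B(t)$ in $Q$ with $p(t,0)=p(t,1)=p_x(t,0)=p_x(t,1)=0$ and terminal datum $p_T\in L^2_{\mathcal{F}_T}(\Omega;L^2(G))$. Applying It\^o's formula to $\int_G h p\,\dx$ for $h$ solving (\ref{equationh}) and using the boundary conditions on both $h$ and $p$ to eliminate the fourth-order pairing, I obtain
\begin{equation}\nonumber
\begin{split}
&\EE\int_G h(T)p_T\,\dx - \EE\int_G h_0 p(0)\,\dx
\\
=\ &\EE\int_Q \lambda^7\mu^8\xi^7\theta^2 r p\,\dxt + \EE\intt\int_{G_0} v p\,\dxt + \EE\int_Q \lambda^4\mu^4\xi^5\theta^2 R P\,\dxt + \EE\int_Q V P\,\dxt.
\end{split}
\end{equation}
The null-controllability requirement $h(T)=0$ is then a linear equation in the duality pairing that must hold for every $p_T$.

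Next, I would introduce the functional
\begin{equation}\nonumber
\begin{split}
\mathcal{J}(p_T)
=\ &\frac{1}{2}\EE\intt\int_{G_0}\lambda^7\mu^8\xi^7\theta^2 |p|^2\,\dxt + \frac{1}{2}\EE\int_Q \lambda^4\mu^4\xi^5\theta^2 |P|^2\,\dxt
\\
&- \EE\int_G h_0 p(0)\,\dx - \EE\int_Q \lambda^7\mu^8\xi^7\theta^2 r p\,\dxt - \EE\int_Q \lambda^4\mu^4\xi^5\theta^2 R P\,\dxt
\end{split}
\end{equation}
on the Hilbert space $\mathcal{H}$ obtained by completing the admissible terminal data under the norm $\|p_T\|^2_\mathcal{H}$ equal to the first two summands. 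Theorem \ref{carle2} applied with $\phi_0\equiv 0$ yields
\begin{equation}\nonumber
\lambda^4\mu^5 e^{4\mu(10m+1)}\EE\|\theta(0)p(0)\|^2_{L^2(G)} + \EE\int_Q \lambda^7\mu^8\xi^7\theta^2 |p|^2\,\dxt \leq C\|p_T\|^2_\mathcal{H},
\end{equation}
which simultaneously certifies that $\|\cdot\|_\mathcal{H}$ is a genuine norm (observability) and, by Cauchy-Schwarz, bounds each linear term of $\mathcal{J}$ by $M\|p_T\|_\mathcal{H}$, where $M^2$ is precisely the right-hand side of (\ref{carest3}). Strict convexity and continuity of $\mathcal{J}$ being automatic, a unique minimizer $\hat p_T\in\mathcal{H}$ exists with $\|\hat p_T\|_\mathcal{H}\leq 2M$.

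With $(\hat p,\hat P)$ the adjoint solution associated to $\hat p_T$, I would set $\hat v := -\lambda^7\mu^8\xi^7\theta^2 \hat p\,\chi_{G_0}$ and $\hat V := -\lambda^4\mu^4\xi^5\theta^2 \hat P$. The Euler-Lagrange equation of $\mathcal{J}$ at $\hat p_T$ is exactly the duality identity above with these choices and the left-hand side set to zero, so the corresponding $\hat h$ satisfies $\hat h(T)=0$, $\mathbb{P}$-a.s. The control-norm part of (\ref{carest3}) collapses to $\|\hat p_T\|^2_\mathcal{H}\leq 4M^2$, matching the claimed bound. The weighted state norms $\|\theta^{-1}\hat h\|$, $\|\lambda^{-1}\mu^{-1}\xi^{-3/2}\theta^{-1}\hat h_x\|$, $\|\lambda^{-2}\mu^{-2}\xi^{-5/2}\theta^{-1}\hat h_{xx}\|$ are then recovered by applying It\^o's formula to $\theta^{-2}|\hat h|^2$, integrating by parts in the fourth-order term, and absorbing the time derivative $(\theta^{-2})_t$ against the already-controlled control norms and source contributions. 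The main obstacle is the coercivity step, which relies on an observability bound for $\|\theta(0)p(0)\|_{L^2}$ in terms of $\|p_T\|_\mathcal{H}$; this is exactly where the non-degeneracy of $\gamma$ at $t=0$ (the novel feature of (\ref{gamma})) is indispensable, since the term $\lambda^4\mu^5 e^{4\mu(10m+1)}\EE\|\theta(0)p(0)\|^2_{L^2}$ on the left-hand side of (\ref{carest2}) is what allows the initial-datum functional $\EE\int_G h_0 p(0)\,\dx$ to be estimated, generating the matching weight $\lambda^{-4}\mu^{-5}e^{-4\mu(10m+1)}\theta^{-2}(0)$ on $|h_0|^2$ in (\ref{carest3}).
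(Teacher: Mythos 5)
Your HUM construction of the controls is sound and close in spirit to the paper's argument: both reduce matters to Theorem \ref{carle2} applied to the adjoint equation, and your coercivity and duality bookkeeping for the two control norms and the $|h_{0}|^2$ term is correct, including the role of the non-degenerate weight at $t=0$. The paper implements the same idea through a penalized family of optimal control problems (adding $\frac{1}{2\varepsilon}\EE\int_{G}|h(T)|^2\dx$ to the cost and letting $\varepsilon\rightarrow 0$) rather than by completing the terminal-data space, but that difference by itself is not essential.

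The genuine gap is in your last step, the recovery of the three weighted state norms in (\ref{carest3}). You propose to obtain $\EE\int_{Q}\theta^{-2}|\hat h|^2\dxt$ a posteriori by applying It\^o's formula to $\theta^{-2}|\hat h|^2$ and ``absorbing the time derivative $(\theta^{-2})_t$ against the already-controlled control norms and source contributions.'' This does not close. Writing $(\theta^{-2})_t=2\lambda|\alpha|\gamma_t\theta^{-2}$ and recalling $\alpha<0$, this term has a favorable sign only on $(0,T/4)$; on $(T/2,T)$ one has $\gamma_t>0$ with $|\gamma_t|\leq C\gamma^2$ sharp up to constants, so the identity produces a right-hand side term of size $\lambda\mu\xi^{2}\theta^{-2}|\hat h|^2$ (using $|\varphi\gamma|\leq\mu\xi^2$), which strictly dominates the quantity $\theta^{-2}|\hat h|^2$ you are trying to bound, and a Gronwall argument diverges because $\int^{T}\gamma_t\dt=\infty$. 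Moreover, the boundary term $\EE\int_{G}\theta^{-2}(T)|\hat h(T)|^2\dx$ is an indeterminate $\infty\cdot 0$, since $\theta^{-2}$ blows up as $t\rightarrow T^{-}$. The paper avoids both problems by (i) inserting the weighted state norm $\frac{1}{2}\EE\int_{Q}\theta_{\varepsilon}^{-2}|h|^2\dxt$ directly into the cost functional, so that the duality identity (its (\ref{45})) yields this bound for free rather than through an energy estimate, and (ii) performing the energy estimates for $h_{xx}$ and $h_{x}$ with the regularized weight $\theta_{\varepsilon}^{-2}$, which stays bounded at $t=T$ (indeed $\xi^{-5}(T,\cdot)=0$ kills the terminal boundary term) and whose time derivative is controlled by the already-bounded term $\EE\int_{Q}\theta_{\varepsilon}^{-2}|h|^2\dxt$; the estimate with the singular weight is then recovered by Fatou's lemma as $\varepsilon\rightarrow 0$. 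Without one of these devices your argument does not deliver the $\hat h$, $\hat h_{x}$, $\hat h_{xx}$ terms of (\ref{carest3}), which are precisely the terms needed later in (\ref{41}) to prove Theorem \ref{carle1}.
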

\begin{proof}[\bf Proof of lemma \ref{carle3}:]
We borrow some ideas from \cite{Liu2014Global}. The main steps are as follows. First, we construct a family of optimal control problems for equation (\ref{equationh}). Next, we establish a uniform estimate for these optimal solutions. Finally, by taking the limit, one can obtain the estimate (\ref{carest3}) and the desired null controllability result.
We divide the proof into three parts.

{\it Step 1.}
For any $\varepsilon>0$, consider the following weight function
\begin{equation}\nonumber
\gamma_{\varepsilon}(t)=
	\left\{
		\begin{aligned}
	&1+\bigg(1-\frac{4t}{T}\bigg)^{\sigma}, &t\in &[0,T/4),\\
	&1,&t\in &[T/4,T/2+\varepsilon),\\
	&\gamma(t-\varepsilon ),& t\in &[T/2+\varepsilon,T],
	    \end{aligned}
    \right.
\end{equation}
where $\sigma$ is the same as in $(\ref{sigma})$.
Using the new weight function $\gamma_{\varepsilon}(t)$, we set
\begin{equation}\nonumber
	\varphi_{\varepsilon}(t,x):=\gamma_{\varepsilon}(t)\alpha(x),\quad
	\theta_{\varepsilon}:=e^{\lambda\varphi_{\varepsilon} }.
\end{equation}
With this notation, we introduce the functional
\begin{equation}\nonumber
\begin{split}
	J_{\varepsilon}(v,V)
	:=&\frac{1}{2}\EE \int_{Q}\theta_{\varepsilon}^{-2}|h|^2\dxt
	+\frac{1}{2}\EE \intt\int_{G_0}\theta^{-2}\lambda^{-7}\mu^{-8}\xi^{-7}|v|^2\dxt
	\\
	&+\frac{1}{2}\EE \int_{Q}\theta^{-2}\lambda^{-4}\mu^{-4}\xi^{-5}|V|^2\dxt
	+\frac{1}{2\varepsilon}\EE \int_{G}|h(T)|^2\dx
\end{split}	
\end{equation}
and consider the following optimal  control problem :
\begin{equation}\label{44}
\begin{split}
\left\{
\begin{aligned}
	&\min_{(v,V)\in \mathscr{V}}J_{\varepsilon}(v,V)\\
	&\text{subject to equation}\ (\ref{equationh}),
\end{aligned}
\right.
\end{split}
\end{equation}
where
\begin{equation}\nonumber
\begin{split}
	\mathscr{V}=\bigg\{&(v,V)\in L^2_\mathbb{F}(0,T;L^2(G_0))\times L^2_\mathbb{F}(0,T;L^2(G))\bigg|\\
	&\EE \intt\int_{G_0}\theta^{-2}\lambda^{-7}\mu^{-8}\xi^{-7}|v|^2\dxt<\infty,\
	\EE \int_{Q}\theta^{-2}\lambda^{-4}\mu^{-4}\xi^{-5}|V|^2\dxt<+\infty \bigg\}.
\end{split}	
\end{equation}
Similar to \cite{Li1995Optimal}, it is easy to check that for any $\varepsilon>0$, (\ref{44}) admits a unique optimal pair solution that we denote by $(v_{\varepsilon},V_{\varepsilon})$. Moreover, by the standard variational method (see \cite{Li1995Optimal,Lion1971Optimal}), it follows that
\begin{equation}\label{43}
	v_{\varepsilon}=\chi _{G_0}\theta^{2}\lambda^{7}\mu^{8}\xi^{7}p_{\varepsilon},\quad
	V_{\varepsilon}=\theta^{2}\lambda^{4}\mu^{4}\xi^{5}P_{\varepsilon}\quad \text{in}\ Q,\ \mathbb{P}\text{-}a.s.,
\end{equation}
where the pair $(p_{\varepsilon},P_{\varepsilon})$ verifies the backward stochastic equation
\begin{equation}\label{equationp}
	\left\{
		\begin{aligned}
	&\dd p_{\varepsilon}-p_{\varepsilon xxxx}\dt
	=\theta_{\varepsilon}^{-2}h_{\varepsilon}\dt
	+P_{\varepsilon}\dd B(t) &\textup{in}\ &Q,\\
    &p_{\varepsilon}(t,0)=p_{\varepsilon}(t,1)=0 &\textup{in}\ &(0,T),\\
    &p_{\varepsilon x}(t,0)=p_{\varepsilon x}(t,1)=0 &\textup{in}\ &(0,T),\\
    &p_{\varepsilon}(T)=-\frac{1}{\varepsilon}h_{\varepsilon}(T) &\textup{in}\ &G,
            \end{aligned}
    \right.
\end{equation}
where
$h_{\varepsilon}$ is the solution of
(\ref{equationh}) with the controls $(v,V)=(v_{\varepsilon},V_{\varepsilon})$.

{\it Step 2.}
By applying the It\^o formula to calculate $\dd(h_{\varepsilon}p_{\varepsilon})$, integrating on $Q$ and taking mathematical expectation on both sides, using (\ref{equationh}), (\ref{43}) and (\ref{equationp}), we obtain that
\begin{equation}\label{45}
\begin{split}
	&\EE\intt\int_{G_{0}}\lambda^7\mu^8\xi^7\theta^2|p_{\varepsilon}|^2\dxt
	+\EE\int_{Q}\lambda^4\mu^4\xi^5\theta^2|P_{\varepsilon}|^2\dxt
	 \\
	 &+\EE\int_{Q}\theta_{\varepsilon}^{-2}|h_{\varepsilon}|^2\dxt
	 +\frac{1}{\varepsilon}\EE\int_{G}\theta_{\varepsilon}^{-2}|h_{\varepsilon}(T)|^2\dx\\
	=&-\EE\int_{G}{h}_{0}p_{\varepsilon}(0)\dx
	-\EE\int_{Q}\lambda^7\mu^8\xi^7\theta^2{p}_{\varepsilon}r\dxt
	-\EE\int_{Q}\lambda^4\mu^4\xi^5\theta^2{P}_{\varepsilon}R\dxt.
\end{split}
\end{equation}
By Young inequality and (\ref{45}), for any $\epsilon_{5}>0$, it follows that
\begin{equation}\label{46}
\begin{split}
	&\EE\intt\int_{G_{0}}\lambda^7\mu^8\xi^7\theta^2|p_{\varepsilon}|^2\dxt
	+\EE\int_{Q}\lambda^4\mu^4\xi^5\theta^2|P_{\varepsilon}|^2\dxt
	 \\
	 &+\EE\int_{Q}\theta_{\varepsilon}^{-2}|h_{\varepsilon}|^2\dxt
	 +\frac{1}{\varepsilon}\EE\int_{G}\theta_{\varepsilon}^{-2}|h_{\varepsilon}(T)|^2\dx\\
	 \leq
	 &\epsilon_5 \bigg(
	 \EE\int_{G}\lambda^4\mu^5e^{4\mu(10m+1)}\theta^2(0,x)|p_{\varepsilon}(0,x)|^2\dx
	 +\EE\int_{Q} \lambda^7\mu^8\xi^7\theta^2 |{p}_{\varepsilon}|^2\dxt
	 \\
	 &+\EE\int_{Q} \lambda^4\mu^4\xi^5\theta^2 |{P}_{\varepsilon}|^2\dxt
	 \bigg)
	 +C_{\epsilon_5} \bigg(
	 \EE\int_{G}\lambda^{-4}\mu^{-5}e^{-4\mu(10m+1)}\theta^{-2}(0,x)|h_{0}|^2\dx
	 \\
	 &+\EE\int_{Q} \lambda^7\mu^8\xi^7\theta^2 r^2\dxt
	 +\EE\int_{Q} \lambda^4\mu^4\xi^5\theta^2 R^2\dxt
	 \bigg).
\end{split}
\end{equation}

Applying the Carleman estimate (\ref{carest2}) in Theorem \ref{carle2} to (\ref{equationp}), we get
\begin{equation}\label{47}	
    \begin{split}
	&\EE \int_{G}\lambda^4\mu^5e^{4\mu(10m+1)}\theta^2(0,x)|p_{\varepsilon}(0,x)|^2\dx
	+\EE \int_{Q}\lambda^7\mu^8\xi^7\theta^2|p_{\varepsilon}|^2\dxt \\
	\leq &
	C\bigg(\EE \int_{0}^{T}\int_{G_0} \lambda^7\mu^8\xi^7\theta^2|p_{\varepsilon}|^2\dxt
	+\EE \int_{Q}\theta^2|\theta_{\varepsilon}^{-2}h_{\varepsilon}|^2\dxt
	+\EE \int_{Q}\lambda^4\mu^4\xi^5\theta^2|P_{\varepsilon}|^2\dxt
	\bigg)\\
	\leq &
	C\bigg(\EE \int_{0}^{T}\int_{G_0} \lambda^7\mu^8\xi^7\theta^2|p_{\varepsilon}|^2\dxt
	+\EE \int_{Q}\theta_{\varepsilon}^{-2}|h_{\varepsilon}|^2\dxt
	+\EE \int_{Q}\lambda^4\mu^4\xi^5\theta^2|P_{\varepsilon}|^2\dxt
	\bigg),
	\end{split}
	\end{equation}
where we have used $\theta^2\theta_{\varepsilon}^{-2}<1$.

Using (\ref{47}) and choosing $\epsilon_{5}>0$ small enough in (\ref{46}), we obtain that
\begin{equation}\label{49}
\begin{split}
&\EE\intt\int_{G_{0}}\lambda^7\mu^8\xi^7\theta^2|p_{\varepsilon}|^2\dxt
	+\EE\int_{Q}\lambda^4\mu^4\xi^5\theta^2|P_{\varepsilon}|^2\dxt
	 \\
	 &+\EE\int_{Q}\theta_{\varepsilon}^{-2}|h_{\varepsilon}|^2\dxt
	 +\frac{1}{\varepsilon}\EE\int_{G}\theta_{\varepsilon}^{-2}|h_{\varepsilon}(T)|^2\dx\\
	 \leq
	 &C \bigg(
	 \EE\int_{G}\lambda^{-4}\mu^{-5}e^{-4\mu(10m+1)}\theta^{-2}(0,x)|h_{0}|^2\dx
	 +\EE\int_{Q} \lambda^7\mu^8\xi^7\theta^2 r^2\dxt
	 \\
	 &+\EE\int_{Q} \lambda^4\mu^4\xi^5\theta^2 R^2\dxt
	 \bigg).
\end{split}
\end{equation}
Noting the representation of the control pair $(v_{\varepsilon},V_{\varepsilon})$ in (\ref{43}), we deduce from (\ref{49}) that
\begin{equation}\label{60}
\begin{split}
&\EE\intt\int_{G_{0}}\lambda^{-7}\mu^{-8}\xi^{-7}\theta^{-2}|v_{\varepsilon}|^2\dxt
	+\EE\int_{Q}\lambda^{-4}\mu^{-4}\xi^{-5}\theta^{-2}|V_{\varepsilon}|^2\dxt
	 \\
	 &+\EE\int_{Q}\theta_{\varepsilon}^{-2}|h_{\varepsilon}|^2\dxt
	 +\frac{1}{\varepsilon}\EE\int_{G}\theta_{\varepsilon}^{-2}|h_{\varepsilon}(T)|^2\dx\\
	 \leq
	 &C \bigg(
	 \EE\int_{G}\lambda^{-4}\mu^{-5}e^{-4\mu(10m+1)}\theta^{-2}(0,x)|h_{0}|^2\dx
	 +\EE\int_{Q} \lambda^7\mu^8\xi^7\theta^2 r^2\dxt
	 \\
	 &+\EE\int_{Q} \lambda^4\mu^4\xi^5\theta^2 R^2\dxt
	 \bigg).
\end{split}	
\end{equation}

Now, we are to establish an appropriate uniform bound for $h_{\varepsilon xx}$ and $h_{\varepsilon x}$, which will be achieved by a weighted energy estimate for (\ref{equationh}). In fact, by using It\^o's formula to  calculate $\dd(\lambda^{-4}\mu^{-4}\xi^{-5}\theta_{\varepsilon}^{-2}h_{\varepsilon}^2)$, integrating by parts and taking mathematical expectation, we have
\begin{equation}\label{48}
\begin{split}
&2\EE \int_{Q}\lambda^{-4}\mu^{-4}\xi^{-5}\theta_{\varepsilon}^{-2}|h_{\varepsilon xx}|^2\dxt
\\
=\
&\EE\int_{G}\lambda^{-4}\mu^{-4}\xi^{-5}(0,x)\theta_{\varepsilon}^{-2}(0,x)h_{0}^2\dx
+\EE \int_{Q}\lambda^{-4}\mu^{-4}(\xi^{-5}\theta_{\varepsilon}^{-2})_{t}h_{\varepsilon}^2\dxt
\\
&-4\EE \int_{Q}\lambda^{-4}\mu^{-4}(\xi^{-5}\theta_{\varepsilon}^{-2})_{xx}h_{\varepsilon}h_{\varepsilon xx}\dxt
+\EE \int_{Q}\lambda^{-4}\mu^{-4}(\xi^{-5}\theta_{\varepsilon}^{-2})_{xxxx}|h_{\varepsilon}|^2\dxt
\\
&+2\EE \int_{Q}\theta_{\varepsilon}^{-2}\theta^2 \lambda^3\mu^4\xi^2 h_{\varepsilon}  r\dxt
+2\EE \intt\int_{G_{0}}\lambda^{-4}\mu^{-4}\xi^{-5}\theta_{\varepsilon}^{-2}h_{\varepsilon} v_{\varepsilon}\dxt
\\
\end{split}	
\end{equation}
\begin{equation}\nonumber
\begin{split}
&+2\EE \int_{Q}\theta_{\varepsilon}^{-2}\theta^2RV_{\varepsilon}\dxt
+\EE \int_{Q}\theta_{\varepsilon}^{-2}\theta^4\lambda^4\mu^4\xi^5|R|^2\dxt
\\
&+\EE \int_{Q}\lambda^{-4}\mu^{-4}\xi^{-5}\theta_{\varepsilon}^{-2}|V_{\varepsilon}|^2\dxt
=:\sum_{i=1}^{9}\mathcal{I}_{i},
\end{split}	
\end{equation}
where we have used the fact that the weight function $\theta^{-2}_{\varepsilon}(t,\cdot)$ does not blow up as $t\rightarrow T$, and $\xi^{-5}(T,x)=0$.
Next, we estimate $\mathcal{I}_{i}$ $(i=1,2\dots,7)$, respectively.
Noting that $\theta_{\varepsilon}(0)=\theta(0)$ and  $m\geq 1$, we have
\begin{equation}\nonumber
M_{0}:=\frac{\xi^{-5}(0)\theta_{\varepsilon}^{-2}(0)}{\mu^{-1}e^{-4\mu(10m+1)}\theta^{-2}(0)}
\leq
\frac{\mu}{e^{6\mu}}
\leq 1	
\end{equation}
for a.e. $x\in G$ and $\mu\geq 1$.
Further,
\begin{equation}\nonumber
\begin{split}
\mathcal{I}_{1}
=\ &\EE\int_{G}M_{0}\lambda^{-4}\mu^{-5}e^{-4\mu(10m+1)}\theta^{-2}(0,x)|h_{0}|^2\dx
\\
\leq\ &
\EE\int_{G}\lambda^{-4}\mu^{-5}e^{-4\mu(10m+1)}\theta^{-2}(0,x)|h_{0}|^2\dx.
\end{split}
\end{equation}
Noting that
\begin{equation}\nonumber
(\xi^{-5}\theta_{\varepsilon}^{-2})_{t}
=-(2\lambda\varphi+5)\xi^{-5}\theta^{-2}\frac{\gamma_{t}}{\gamma}
\leq -C\lambda \xi^{-5}|\varphi||\gamma_{t}|\theta^{-2}
\leq 0
\end{equation}
on $(t,x)\in [0,T/4]\times G$ and
\begin{equation}\nonumber
\begin{split}
|(\xi^{-5}\theta_{\varepsilon}^{-2})_{t}|
\leq  C(\lambda+1)\xi^{-5}\theta_{\varepsilon}^{-2}|\varphi\gamma|
\leq C(\lambda\mu+\mu)\xi^{-3}\theta_{\varepsilon}^{-2}
\leq C\lambda\mu\xi^{-3}\theta_{\varepsilon}^{-2}
\end{split}
\end{equation}
on $(t,x)\in [T/4,T]\times G$, we have
\begin{equation}\nonumber
\begin{split}
\mathcal{I}_{2}
\leq\ &
\EE \int_{0}^{T/4}\int_{G}\lambda^{-4}\mu^{-4}(\xi^{-5}\theta_{\varepsilon}^{-2})_{t}h_{\varepsilon}^2\dxt
+\EE \int_{T/4}^{T}\int_{G}\lambda^{-4}\mu^{-4}|(\xi^{-5}\theta_{\varepsilon}^{-2})_{t}|h_{\varepsilon}^2\dxt
\\
\leq\ &\EE \int_{T/4}^{T}\int_{G}\lambda^{-3}\mu^{-3}\xi^{-3}\theta_{\varepsilon}^{-2}h_{\varepsilon}^2\dxt
\\
\leq\ &\EE \int_{Q}\theta_{\varepsilon}^{-2}h_{\varepsilon}^2\dxt.
\end{split}	
\end{equation}
Using $\theta\theta_{\varepsilon}^{-1}<1$ and Young inequality, by direct computation, we obtain that
\begin{equation}\nonumber
\begin{split}
&|\mathcal{I}_{3}|
\leq
\epsilon_{6} \EE \int_{Q}\lambda^{-4}\mu^{-4}\xi^{-5}\theta_{\varepsilon}^{-2}|h_{\varepsilon xx}|^2\dxt
+C_{\epsilon_{6}}\EE \int_{Q}\theta_{\varepsilon}^{-2}|h_{\varepsilon}|^2\dxt,
\\
&|\mathcal{I}_{4}|
\leq
C\EE \int_{Q}\theta_{\varepsilon}^{-2}|h_{\varepsilon}|^2\dxt,
\\
&|\mathcal{I}_{5}|
\leq
C\EE \int_{Q}\theta^2\lambda^{6}\mu^{8}\xi^{4}r^2\dxt
+C\EE \int_{Q}\theta_{\varepsilon}^{-2}|h_{\varepsilon}|^2\dxt,
\\
&|\mathcal{I}_{6}|
\leq
C\EE \intt\int_{G_{0}}\theta_{\varepsilon}^{-2}\lambda^{-7}\mu^{-8}\xi^{-7}| v_{\varepsilon}|^2\dxt
+C\EE \int_{Q}\theta_{\varepsilon}^{-2}|h_{\varepsilon}|^2\dxt,
\\
&|\mathcal{I}_{7}|
\leq
C \EE \int_{Q}\lambda^{4}\mu^{4}\xi^{5}\theta^2 |R|^2\dxt
+C\EE \int_{Q}\lambda^{-4}\mu^{-4}\xi^{-5}\theta^{-2}|V_{\varepsilon}|^2\dxt.
\end{split}
\end{equation}
Set $\epsilon_{6}=1$, combining the above inequalities with (\ref{48}), we arrive at the following estimate
\begin{equation}\label{61}
\begin{split}
&\EE \int_{Q}\lambda^{-4}\mu^{-4}\xi^{-5}\theta_{\varepsilon}^{-2}|h_{\varepsilon xx}|^2\dxt
\\
\leq\
&C\EE\int_{G}\lambda^{-4}\mu^{-5}e^{-4\mu(10m+1)}\theta^{-2}(0,x)|h_{0}|^2\dx
+C\EE \int_{Q}\theta_{\varepsilon}^{-2}h_{\varepsilon}^2\dxt
\\
&+C\EE \int_{Q}\theta^2\lambda^{6}\mu^{8}\xi^{4}r^2\dxt
+C\EE \intt\int_{G_{0}}\theta_{\varepsilon}^{-2}\lambda^{-7}\mu^{-8}\xi^{-7}| v_{\varepsilon}|^2\dxt
\\
&+C\EE \int_{Q}\theta^2\lambda^4\mu^4\xi^5|R|^2\dxt
+C\EE \int_{Q}\lambda^{-4}\mu^{-4}\xi^{-5}\theta^{-2}|V_{\varepsilon}|^2\dxt .
\end{split}	
\end{equation}
In addition, by  integration by parts and boundary conditions in (\ref{equationh}), we have
\begin{equation}\nonumber
\begin{split}
\EE\int_{Q}\lambda^{-2}\mu^{-2}\xi^{-3}\theta_{\varepsilon}^{-2}|h_{\varepsilon x}|^2\dxt
=\ &
-\EE\int_{Q}\lambda^{-2}\mu^{-2}\xi^{-3}\theta_{\varepsilon}^{-2}h_{\varepsilon xx}h_{\varepsilon}\dxt
\\
&+\frac{1}{2}\EE\int_{Q}\lambda^{-2}\mu^{-2}(\xi^{-3}\theta_{\varepsilon}^{-2})_{xx}|h_{\varepsilon}|^2\dxt .
\end{split}	
\end{equation}
Further, by Cauchy  inequality, we obtain that
\begin{equation}\label{62}
\begin{split}
\EE\int_{Q}\lambda^{-2}\mu^{-2}\xi^{-3}\theta_{\varepsilon}^{-2}|h_{\varepsilon x}|^2\dxt
\leq\ &
C \EE \int_{Q}\lambda^{-4}\mu^{-4}\xi^{-5}\theta_{\varepsilon}^{-2}|h_{\varepsilon xx}|^2\dxt
\\
&+C\EE \int_{Q}\theta_{\varepsilon}^{-2}|h_{\varepsilon}|^2\dxt.
\end{split}	
\end{equation}
Combining the (\ref{61}), (\ref{62}) with (\ref{60}), we conclude that
\begin{equation}\label{63}
\begin{split}
&\EE\intt\int_{G_{0}}\lambda^{-7}\mu^{-8}\xi^{-7}\theta^{-2}|v_{\varepsilon}|^2\dxt
	+\EE\int_{Q}\lambda^{-4}\mu^{-4}\xi^{-5}\theta^{-2}|V_{\varepsilon}|^2\dxt
	\\
	 &+\EE\int_{Q}\theta_{\varepsilon}^{-2}|h_{\varepsilon}|^2\dxt
	 +\EE\int_{Q}\lambda^{-2}\mu^{-2}\xi^{-3}\theta_{\varepsilon}^{-2}|h_{\varepsilon x}|^2\dxt
	 \\
	 &+\EE \int_{Q}\lambda^{-4}\mu^{-4}\xi^{-5}\theta_{\varepsilon}^{-2}|h_{\varepsilon xx}|^2\dxt
	 +\frac{1}{\varepsilon}\EE\int_{G}\theta_{\varepsilon}^{-2}|h_{\varepsilon}(T)|^2\dx\\
	 \leq
	 &C \bigg(
	 \EE\int_{G}\lambda^{-4}\mu^{-5}e^{-4\mu(10m+1)}\theta^{-2}(0,x)|h_{0}|^2\dx
	 +\EE\int_{Q} \lambda^7\mu^8\xi^7\theta^2 r^2\dxt
	 \\
	 &+\EE\int_{Q} \lambda^4\mu^4\xi^5\theta^2 R^2\dxt
	 \bigg).
\end{split}	
\end{equation}
{\it Step 3.}
Since the right-hand side of (\ref{63}) is uniform with respect to $\varepsilon$, we readily obtain that there exists $(\hat h,\hat v, \hat V)$ such that
\begin{equation}\label{64}
\begin{aligned}
	&h_{\varepsilon}\rightarrow  \hat h, &\text{weakly in}\ &L^2(\Omega\times (0,T);H_0^2(G)),
	\\
	&v_{\varepsilon}\rightarrow  \hat v,&\text{weakly in}\ &L^2(\Omega\times (0,T);L^2(G_{0})),
	\\
	&V_{\varepsilon}\rightarrow  \hat V,&\text{weakly in}\ &L^2(\Omega\times (0,T);L^2(G)).
\end{aligned}
\end{equation}
We claim that $\hat h$ is the solution to (\ref{equationh}) associated with $(\hat v, \hat V)$. In fact, suppose that $\tilde h$ is the unique solution to (\ref{equationh}) with controls $(\hat v, \hat V)$.
For any $\Lambda_{1}\in L^2_{\mathbb{F}}(0,T;H^{-2}(G))$, we consider $(\psi,\Psi)$  the unique solution to the backward equation
\begin{equation}\nonumber
\left\{
		\begin{aligned}
	&\dd \psi-\psi_{xxxx}\dt=\Lambda_{1}\dt+\Psi\dd B(t) &\textup{in}\ &Q,\\
    &\psi(t,0)=\psi(t,1)=0 &\textup{in}\ &(0,T),\\
    &\psi_{x}(t,0)=\psi_{x}(t,1)=0 &\textup{in}\ &(0,T),\\
    &\psi(T)=0 &\textup{in}\ &G.
         \end{aligned}
    \right.	
\end{equation}
Then, by applying It\^o's formula to calculate $\dd(h_{\varepsilon}\psi-\tilde h\psi)$, we obtain that
\begin{equation}\label{65}
\begin{split}
0=\EE\int_{Q}(h_{\varepsilon}-\tilde h)\Lambda_{1}\dxt
+\EE\intt\int_{G_{0}}(v_{\varepsilon}-\hat v)\psi \dxt
+\EE\int_{Q}(V_{\varepsilon}-\hat V)\Psi \dxt .
\end{split}	
\end{equation}
By taking the limit as $\varepsilon\rightarrow 0$ in (\ref{65}) and using (\ref{64}), we have
\begin{equation}\nonumber
\EE\int_{Q}(\hat h-\tilde h)\Lambda_{1}\dxt =0.
\end{equation}
Since $\Lambda_{1}\in L^2_{\mathbb{F}}(0,T;H^{-2}(G))$ is arbitrary, we deduce that $\hat h=\tilde h$ in $Q$, $\mathbb{P}$-a.s.

To conclude, we notice from (\ref{63}) that $\hat h(T)=0$ in $G$, $\mathbb{P}$-a.s. Moreover, from the weak convergence (\ref{64}), the uniform estimate (\ref{63}) and Fatou's lemma, we deduce (\ref{carest3}). Thus, the proof of lemma \ref{carle3} is complete.
\end{proof}

Now, by applying the lemma \ref{carle3}, we can give the proof of Theorem \ref{carle1}.
\begin{proof}[\bf Proof of Theorem \ref{carle1}:]
Let $(r,R)$ be the solution of $(\ref{equationr})$ and $\hat h$ be the solution of $(\ref{equationh})$ associated the control pair $(\hat{v}, \hat{V})$ obtained in lemma \ref{carle3}. Set the initial state $h_{0}\equiv0$ in $(\ref{equationh})$. Using It\^o formula to calculate  $\dd(\hat{h}r)$, noting the fact $\hat {h}(T,x)=0$ in $G$, $\mathbb{P}$-a.s., integrating on $Q$ and taking mathematical expectation on both sides, we have
\begin{equation}\nonumber
\begin{split}
	&\EE\int_{Q}\hat{h}(r_{xxxx}+\Phi)\dxt
	+\EE\int_{Q}r(-\hat{h}_{xxxx}
	+\lambda^7\mu^8\xi^7\theta^2r+\chi_{G_0}\hat{v})\dxt\\
	&+\EE\int_{Q}R(\lambda^4\mu^4\xi^5\theta^2R+\hat{V})\dxt =0.
\end{split}
\end{equation}
By boundary condition in (\ref{equationr}) and (\ref{equationh}), after integrating by parts, we obtain
\begin{equation}\label{40}
\begin{split}
	&\EE\int_{Q}\lambda^7\mu^8\xi^7\theta^2r^2\dxt
	+\EE\int_{Q}\lambda^4\mu^4\xi^5\theta^2R^2\dxt
	 \\
	=&-\EE\int_{Q}\hat{h}\phi_0\dxt
	+\EE\int_{Q}\hat{h}_{x}\phi_{1}\dxt
	-\EE\int_{Q}\hat{h}_{xx}\phi_{2}\dxt \\
	&-\EE\intt\int_{G_0}\hat{v}r\dxt
	-\EE\int_{Q}\hat{V}R\dxt.
\end{split}
\end{equation}
In view of (\ref{carest3}) in lemma \ref{carle3}, for any $\epsilon_{7}>0$, we use the Young inequality on the right-hand side of (\ref{40}) to obtain
\begin{equation}\label{41}
\begin{split}
	&\EE\int_{Q}\lambda^7\mu^8\xi^7\theta^2r^2\dxt
	+\EE\int_{Q}\lambda^4\mu^4\xi^5\theta^2R^2\dxt\\
	\leq & \epsilon_{7}\bigg(\EE \int_{Q}\theta^{-2}|\hat{h}|^2\dxt
	+\EE \int_{Q}\theta^{-2}\lambda^{-2}\mu^{-2}\xi^{-3}|\hat{h}_{x}|^2\dxt\\
	&+\EE \int_{Q}\theta^{-2}\lambda^{-4}\mu^{-4}\xi^{-5}|\hat{h}_{xx}|^2\dxt
	+\EE \intt\int_{G_0}\theta^{-2}\lambda^{-7}\mu^{-8}\xi^{-7}|\hat{v}|^2\dxt\\
	&+\EE \int_{Q}\theta^{-2}\lambda^{-4}\mu^{-4}\xi^{-5}|\hat{V}|^2\dxt \bigg)
	\\
	&+C_{\epsilon_{7}}\bigg(\EE \int_{Q}\theta^{2}|\phi_{0}|^2\dxt
	+\EE \int_{Q}\theta^{2}\lambda^{2}\mu^{2}\xi^{3}|\phi_{1}|^2\dxt\\
	&+\EE \int_{Q}\theta^{2}\lambda^{4}\mu^{4}\xi^{5}|\phi_{2}|^2\dxt
	+\EE \intt\int_{G_0}\theta^{2}\lambda^{7}\mu^{8}\xi^{7}|r|^2\dxt\\
	&+\EE \int_{Q}\theta^{2}\lambda^{4}\mu^{4}\xi^{5}|R|^2\dxt\bigg).
\end{split}
\end{equation}
By choosing $\epsilon_{7}>0$ small enough in (\ref{41}) and using the Carleman estimate (\ref{carest3}) with the initial state $h_{0}=0$, we obtain that
\begin{equation}\label{42}
\begin{split}
	&\EE\int_{Q}\lambda^7\mu^8\xi^7\theta^2r^2\dxt \\
	\leq
	&C\bigg(\EE \int_{Q}\theta^{2}|\phi_{0}|^2\dxt
	+\EE \int_{Q}\theta^{2}\lambda^{2}\mu^{2}\xi^{3}|\phi_{1}|^2\dxt\\
	&+\EE \int_{Q}\theta^{2}\lambda^{4}\mu^{4}\xi^{5}|\phi_{2}|^2\dxt
	+\EE \intt\int_{G_0}\theta^{2}\lambda^{7}\mu^{8}\xi^{7}|r|^2\dxt\\
	&+\EE \int_{Q}\theta^{2}\lambda^{4}\mu^{4}\xi^{5}|R|^2\dxt\bigg).
\end{split}
\end{equation}

Next, let us estimate the terms
$r_{x}$ and $r_{xx}$, which will be done by a weighted energy estimate for (\ref{equationr}). In fact , by using It\^o's formula to calculate $\dd(\lambda^3\mu^4\xi^3\theta^2r^2) $, we have
\begin{equation}\label{66}
\begin{split}
&\EE\int_{G}
\lambda^3\mu^4\xi^3(0)\theta^2(0)r^2(0)\dx
+2\EE\int_{Q}
\lambda^3\mu^4\xi^3\theta^2|r_{xx}|^2\dxt
\\
=\ & -\EE\int_{Q}
\lambda^3\mu^4(\xi^3\theta^2)_{t}r^2\dxt
-4\EE\int_{Q}
\lambda^3\mu^4(\xi^3\theta^2)_{xx}r_{xx}r\dxt
\\
&+\EE\int_{Q}
\lambda^3\mu^4(\xi^3\theta^2)_{xxxx}r^2\dxt
-2\EE\int_{Q}
\lambda^3\mu^4\xi^3\theta^2r\Phi\dxt
\\
&-\EE\int_{Q}
\lambda^3\mu^4\xi^3\theta^2R^2\dxt:=\sum_{i=1}^{5}\mathcal{W}_{i}.
\end{split}	
\end{equation}
Noting that
\begin{equation}\nonumber
-(\xi^{3}\theta^{2})_{t}
=-(3+2\lambda\varphi)\xi^{3}\theta^{2}\frac{\gamma_{t}}{\gamma}
\leq -C\lambda \xi^{3}|\varphi||\gamma_{t}|\theta^{2}
\leq 0
\end{equation}
on $(t,x)\in [0,T/4]\times G$ and
\begin{equation}\nonumber
\begin{split}
|(\xi^{3}\theta^{2})_{t}|
\leq  C(\lambda+1)\xi^{3}\theta^{2}|\varphi\gamma|
\leq C(\lambda\mu+\mu)\xi^{5}\theta^{2}
\leq C\lambda\mu\xi^{5}\theta^{2}
\end{split}
\end{equation}
on $(t,x)\in [T/4,T]\times G$, we have
\begin{equation}\nonumber
\begin{split}
\mathcal{W}_{1}
\leq\ &
-\EE \int_{0}^{T/4}\int_{G}\lambda^{3}\mu^{4}(\xi^{3}\theta^{2})_{t}r^2\dxt
+\EE \int_{T/4}^{T}\int_{G}\lambda^{3}\mu^{4}|(\xi^{3}\theta^{2})_{t}|r^2\dxt
\\
\leq\ &\EE \int_{T/4}^{T}\int_{G}\lambda^{4}\mu^{5}\xi^{5}\theta^{2}r^2\dxt.
\end{split}	
\end{equation}
Using Young inequality, by direct computation, we obtain that
\begin{equation}\nonumber
\begin{split}
|\mathcal{W}_{2}|
\leq \ &
\epsilon_{8} \EE \int_{Q}\lambda^{3}\mu^{4}\xi^{3}\theta^{2}|r_{xx}|^2\dxt
+C_{\epsilon_{8}}\EE \int_{Q}\lambda^{7}\mu^{8}\xi^{7}\theta^{2}|r|^2\dxt,
\\
|\mathcal{W}_{3}|
\leq \ &
C\EE \int_{Q}\lambda^{7}\mu^{8}\xi^{7}\theta^{2}|r|^2\dxt,
\\
|\mathcal{W}_{4}|
\leq \ &
C\bigg(\EE \int_{Q}\theta^2\lambda^{7}\mu^{8}\xi^{7}r^2\dxt
+\EE \int_{Q}\theta^2\lambda^{5}\mu^{6}\xi^{5}|r_{x}|^2\dxt
\bigg)
+\epsilon_{9}\EE \int_{Q}\theta^2\lambda^{3}\mu^{4}\xi^{3}|r_{xx}|^2\dxt
\\
&+C\bigg(\EE \int_{Q}\theta^2|\phi_{0}|^2\dxt
+\EE \int_{Q}\theta^2\lambda\mu^{2}\xi|\phi_{1}|^2\dxt
\bigg)
+C_{\epsilon_{9}}\EE \int_{Q}\theta^2\lambda^{3}\mu^{4}\xi^{3}|\phi_{2}|^2\dxt.
\end{split}
\end{equation}
By choosing $\epsilon_{8}>0$ and $\epsilon_{9}>0$ small enough, combining the above inequalities with (\ref{66}), we arrive at the following estimate
\begin{equation}\label{67}
\begin{split}
&\EE\int_{G}
\lambda^3\mu^4\xi^3(0)\theta^2(0)r^2(0)\dx
+\EE\int_{Q}
\lambda^3\mu^4\xi^3\theta^2|r_{xx}|^2\dxt
\\
\leq \ &
C\bigg(\EE \int_{Q}\theta^2\lambda^{7}\mu^{8}\xi^{7}r^2\dxt
+\EE \int_{Q}\theta^2\lambda^{5}\mu^{6}\xi^{5}|r_{x}|^2\dxt
\bigg)
\\
&+C\bigg(\EE \int_{Q}\theta^2|\phi_{0}|^2\dxt
+\EE \int_{Q}\theta^2\lambda\mu^{2}\xi|\phi_{1}|^2\dxt
+\EE \int_{Q}\theta^2\lambda^{3}\mu^{4}\xi^{3}|\phi_{2}|^2\dxt
\bigg).
\end{split}	
\end{equation}

In addition, by  integration by parts and boundary conditions in (\ref{equationr}), we have
\begin{equation}\nonumber
\begin{split}
\EE \int_{Q}\theta^2\lambda^{5}\mu^{6}\xi^{5}|r_{x}|^2\dxt
=-\EE \int_{Q}\theta^2\lambda^{5}\mu^{6}\xi^{5}r_{xx}r\dxt
+\frac{1}{2}\EE \int_{Q}(\theta^2\lambda^{5}\mu^{6}\xi^{5})_{xx}r^2\dxt.
\end{split}	
\end{equation}
Further, by Young inequality, we obtain that
\begin{equation}\label{68}
\begin{split}
&\EE \int_{Q}\theta^2\lambda^{5}\mu^{6}\xi^{5}|r_{x}|^2\dxt
\\
\leq\
&\epsilon_{10} \EE \int_{Q}\theta^2\lambda^{3}\mu^{4}\xi^{3}|r_{xx}|^2\dxt
+C_{\epsilon_{10}} \EE \int_{Q}\theta^2\lambda^{7}\mu^{8}\xi^{7}|r|^2\dxt.
\end{split}	
\end{equation}
By choosing $\epsilon_{10}>0$ small enough, combining (\ref{67}) and (\ref{68}), we have
\begin{equation}\label{69}
\begin{split}
&\EE\int_{G}
\lambda^3\mu^4\xi^3(0)\theta^2(0)r^2(0)\dx
+\EE\int_{Q}
\lambda^3\mu^4\xi^3\theta^2|r_{xx}|^2\dxt
\\
\leq \ &
C\bigg(\EE \int_{Q}\theta^2\lambda^{7}\mu^{8}\xi^{7}r^2\dxt
+\EE \int_{Q}\theta^2|\phi_{0}|^2\dxt
\\
&+\EE \int_{Q}\theta^2\lambda\mu^{2}\xi|\phi_{1}|^2\dxt
+\EE \int_{Q}\theta^2\lambda^{3}\mu^{4}\xi^{3}|\phi_{2}|^2\dxt
\bigg).
\end{split}	
\end{equation}

Finally, from (\ref{42}), (\ref{68}) and (\ref{69}), we obtain (\ref{carest1}). Thus, the proof of Theorem \ref{carle1} is complete.
\end{proof}

\section{The proof of Theorem \ref{control}}

In this section, we will give the proof of the Theorem \ref{control}. Applying Theorem \ref{carle1}, we first  establish a controllability result for a linear stochastic system as mentioned in section 1, that is,
\begin{equation}\label{equationy3}
	\left\{
		\begin{aligned}
	&\dd y+y_{xxxx}\dt=(\phi+\chi_{G_0}u)\dt+U\dd B(t) &\textup{in}\ &Q,\\
    &y(t,0)=y(t,1)=0 &\textup{in}\ &(0,T),\\
    &y_{x}(t,0)=y_{x}(t,1)=0 &\textup{in}\ &(0,T),\\
    &y(0)=y_0 &\textup{in}\ &G,
            \end{aligned}
    \right.
\end{equation}
where $y=y(t,x)$ denotes the state variable associated to the initial state $y_{0}\in L^2_{\mathcal{F}_{0}}(\Omega;L^2(G))$, the control pair $(u,U)\in L^2_{\mathbb{F}}(0,T;L^2(G_{0}))\times L^2_{\mathbb{F}}(0,T;L^2(G))$.
Observe that given the aforementioned regularity on the controls and source term, one can easily show that system $(\ref{equationy3})$ admits a unique solution $y\in \mathcal{H}_{T}$ (see e.g., \cite[Theorem 3.24]{Lu2021Mathematical}).

We define the space
\begin{equation}\nonumber
	{\mathscr{S}_{\lambda,\mu}}=\bigg\{\phi\in L^2_{\mathbb{F}}(0,T;L^2(G))\ \bigg|\ \bigg(\EE\int_{Q}\theta^{-2}\lambda^{-7}\mu^{-8}\xi^{-7}|\phi|^2\dxt\bigg)^{\frac{1}{2}}<\infty \bigg\},
\end{equation}
which is a Banach space equipped with the canonical norm denoted by $\|\cdot\|_{\mathscr{S}_{\lambda,\mu}}$.

\begin{theorem}\label{control1}
Assume that $\phi\in L^2_{\mathbb{F}}(0,T;L^2(G))$. For any $y_0\in L^2_{\mathcal{F}_{0}}(\Omega;L^2(G))$, there is a control pair $(\hat{u},\hat{U})$ such that the associated solution $\hat{y}$ to the controlled  system $(\ref{equationy3})$ satisfies $\hat y(T)=0$ in $G$, $\mathbb{P}$-a.s. Moreover, one can find two positive constants $\lambda_{0}$ and $\mu_{0}$, such that
\begin{equation}\label{control1est}
\begin{split}
&\EE\intt\int_{G_0}\theta^{-2}\lambda^{-7}\mu^{-8}\xi^{-7}|\hat u|^2\dxt
+\EE\int_{Q}\theta^{-2}\lambda^{-4}\mu^{-4}\xi^{-5}|\hat U|^2\dxt
\\
&+\EE\int_{Q}\theta^{-2}|\hat y|^2\dxt
+\EE\int_{Q}\theta^{-2}\lambda^{-2}\mu^{-2}\xi^{-3}|\hat y_{x}|^2\dxt
\\
&+\EE\int_{Q}\theta^{-2}\lambda^{-4}\mu^{-4}\xi^{-5}|\hat y_{xx}|^2\dxt
\\
\leq \
	&C\bigg(\EE\int_{G}\lambda^{-3}\mu^{-4}e^{-30\mu m}\theta^{-2}(0)|y_{0}|^2\dx
	+\|\phi\|^2_{\mathscr{S}_{\lambda,\mu}}\bigg),
\end{split}
\end{equation}
for all $\lambda\geq\lambda_{0}$ and $\mu\geq\mu_{0}$, where  $C>0$ only depends on $G$ and $G_0$.
\end{theorem}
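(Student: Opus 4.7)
The plan is to adapt the penalization/duality strategy of Lemma~\ref{carle3} to the present setting, where the drift of the primal equation carries the extra source $\phi$. For each $\varepsilon>0$, I minimize over the admissible set $\mathscr{V}$ introduced in Lemma~\ref{carle3} the cost
\begin{equation*}
\widetilde J_{\varepsilon}(u,U)=\tfrac12\EE\int_Q\theta^{-2}|y|^{2}\dxt+\tfrac12\EE\intt\int_{G_{0}}\theta^{-2}\lambda^{-7}\mu^{-8}\xi^{-7}|u|^{2}\dxt+\tfrac12\EE\int_Q\theta^{-2}\lambda^{-4}\mu^{-4}\xi^{-5}|U|^{2}\dxt+\tfrac{1}{2\varepsilon}\EE\|y(T)\|_{L^{2}(G)}^{2},
\end{equation*}
subject to (\ref{equationy3}). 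Strict convexity gives a unique minimizer $(u_{\varepsilon},U_{\varepsilon})$, and the standard Pontryagin/Lagrangian computation yields
\begin{equation*}
u_{\varepsilon}=\chi_{G_{0}}\theta^{2}\lambda^{7}\mu^{8}\xi^{7}z_{\varepsilon},\qquad U_{\varepsilon}=\theta^{2}\lambda^{4}\mu^{4}\xi^{5}Z_{\varepsilon},
\end{equation*}
where $(z_{\varepsilon},Z_{\varepsilon})$ solves the backward stochastic fourth-order equation with drift source $\theta^{-2}y_{\varepsilon}$, terminal state $-\varepsilon^{-1}y_{\varepsilon}(T)$, and clamped boundary conditions, with $y_{\varepsilon}$ the associated optimal state of (\ref{equationy3}).

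Applying It\^o's formula to $\dd(y_{\varepsilon}z_{\varepsilon})$, integrating by parts in $x$ (all boundary contributions cancel by the clamped conditions $y_{\varepsilon}=y_{\varepsilon,x}=z_{\varepsilon}=z_{\varepsilon,x}=0$ on $\partial G$), and taking expectation produces the duality identity
\begin{equation*}
\EE\int_{Q}\theta^{-2}|y_{\varepsilon}|^{2}\dxt+\tfrac{1}{\varepsilon}\EE\|y_{\varepsilon}(T)\|_{L^{2}(G)}^{2}+\EE\intt\int_{G_{0}}\theta^{2}\lambda^{7}\mu^{8}\xi^{7}|z_{\varepsilon}|^{2}\dxt+\EE\int_{Q}\theta^{2}\lambda^{4}\mu^{4}\xi^{5}|Z_{\varepsilon}|^{2}\dxt=-\EE\int_{G}y_{0}z_{\varepsilon}(0)\dx-\EE\int_{Q}z_{\varepsilon}\phi\dxt.
\end{equation*}
Young's inequality bounds the right-hand side by a $\kappa$-multiple of $\EE\int_{G}\lambda^{3}\mu^{4}e^{30\mu m}\theta^{2}(0)|z_{\varepsilon}(0)|^{2}\dx+\EE\int_{Q}\theta^{2}\lambda^{7}\mu^{8}\xi^{7}|z_{\varepsilon}|^{2}\dxt$ plus $C_{\kappa}\bigl(\EE\int_{G}\lambda^{-3}\mu^{-4}e^{-30\mu m}\theta^{-2}(0)|y_{0}|^{2}\dx+\|\phi\|^{2}_{\mathscr{S}_{\lambda,\mu}}\bigr)$. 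Invoking the Carleman estimate (\ref{carest1}) applied to $(z_{\varepsilon},Z_{\varepsilon})$ with $\phi_{0}=\theta^{-2}y_{\varepsilon}$, $\phi_{1}=\phi_{2}\equiv0$, the two $\kappa$-terms are dominated by the $G_{0}$-localized integral of $\theta^{2}\lambda^{7}\mu^{8}\xi^{7}|z_{\varepsilon}|^{2}$, the weighted $Z_{\varepsilon}$-integral, and $\EE\int_{Q}\theta^{-2}|y_{\varepsilon}|^{2}\dxt$, all already on the left of the duality identity. Taking $\kappa$ small and translating through the representation of $(u_{\varepsilon},U_{\varepsilon})$ yields a uniform (in $\varepsilon$) bound on the controls, on $\EE\int_{Q}\theta^{-2}|y_{\varepsilon}|^{2}$, and on $\varepsilon^{-1}\EE\|y_{\varepsilon}(T)\|^{2}$ in terms of the right-hand side of (\ref{control1est}).

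To recover the weighted norms of $y_{\varepsilon,x}$ and $y_{\varepsilon,xx}$ claimed by (\ref{control1est}), I apply It\^o's formula to $\lambda^{-4}\mu^{-4}\xi^{-5}\theta^{-2}|y_{\varepsilon}|^{2}$ exactly as in the derivation of (\ref{61})--(\ref{62}), with the new source $\phi$ contributing a cross term $2\EE\int_Q\lambda^{-4}\mu^{-4}\xi^{-5}\theta^{-2}y_{\varepsilon}\phi\dxt$ that is handled by Cauchy--Schwarz against the $\mathscr{S}_{\lambda,\mu}$-norm. The resulting uniform bounds permit the extraction of weak limits $u_{\varepsilon}\rightharpoonup\hat{u}$, $U_{\varepsilon}\rightharpoonup\hat{U}$, $y_{\varepsilon}\rightharpoonup\hat{y}$ in the corresponding weighted $L^{2}$-spaces. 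A duality argument testing against the solution of a backward fourth-order auxiliary equation (as in Step~3 of Lemma~\ref{carle3}) identifies $\hat{y}$ as the solution of (\ref{equationy3}) driven by $(\hat{u},\hat{U})$; the null condition $\hat{y}(T)=0$, $\mathbb{P}$-a.s., follows from the uniform bound on $\varepsilon^{-1}\EE\|y_{\varepsilon}(T)\|^{2}$; and Fatou's lemma upgrades the uniform estimate to (\ref{control1est}). The main obstacle is the careful book-keeping of weight powers so that Carleman absorption works simultaneously at the initial time, in the interior, and against $\phi$: the $e^{-30\mu m}$ factor on $y_{0}$ is dictated by the $e^{30\mu m}$ initial weight on the left-hand side of (\ref{carest1}), while the $\theta^{-2}\lambda^{-7}\mu^{-8}\xi^{-7}$ weight defining $\mathscr{S}_{\lambda,\mu}$ is precisely dual to the interior weight $\theta^{2}\lambda^{7}\mu^{8}\xi^{7}$ produced by the Carleman inequality, which is what makes the whole absorption scheme close.
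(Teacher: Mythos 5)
Your argument is correct in outline but follows a genuinely different route from the paper's proof of Theorem \ref{control1}. You keep the cost functional minimal (only the weighted $L^2$ norm of the state plus the control and penalization terms), so the adjoint equation carries the $L^2$-valued drift source $\theta_{\varepsilon}^{-2}y_{\varepsilon}$, and you then recover the weighted bounds on $\hat y_{x}$ and $\hat y_{xx}$ \emph{a posteriori} by the It\^o/energy estimate of the type (\ref{61})--(\ref{62}); this mirrors the structure of Lemma \ref{carle3} rather than the paper's own proof. The paper instead inserts the weighted norms of $y_{x}$ and $y_{xx}$ directly into the functional $J_{\varepsilon}$, which makes the adjoint source the distribution $\Xi=\theta_{\varepsilon}^{-2}y_{\varepsilon}-(\theta_{\varepsilon}^{-2}\lambda^{-2}\mu^{-2}\xi^{-3}y_{\varepsilon x})_x+(\theta_{\varepsilon}^{-2}\lambda^{-4}\mu^{-4}\xi^{-5}y_{\varepsilon xx})_{xx}\in L^2_{\mathbb{F}}(0,T;H^{-2}(G))$ and forces the use of the full $H^{-2}$-Carleman estimate (\ref{carest1}) with nonzero $\phi_{1},\phi_{2}$; in exchange, the duality identity (\ref{51}) delivers all five terms of (\ref{control1est}) at once, with no separate energy step. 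Your route buys a shorter optimality system and only needs (\ref{carest1}) with $\phi_{1}=\phi_{2}=0$ (indeed Theorem \ref{carle2} would almost suffice, except that the initial-time weight $\lambda^{3}\mu^{4}e^{30\mu m}$ you need for the Young step on $\EE\int_{G}y_{0}z_{\varepsilon}(0)\dx$ is the one produced by (\ref{carest1})); the extra cross terms created by $\phi$ and $\chi_{G_{0}}u_{\varepsilon}$ in your energy identity are indeed absorbable, since $\lambda^{-8}\mu^{-8}\xi^{-10}\theta_{\varepsilon}^{-2}\le\lambda^{-7}\mu^{-8}\xi^{-7}\theta^{-2}$ by $\theta\theta_{\varepsilon}^{-1}\le1$ and $\xi\ge1$.

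One point you must repair: in your cost functional and duality identity you wrote the state term with the weight $\theta^{-2}$, which blows up as $t\to T^{-}$ (recall $\alpha<0$ and $\gamma\to\infty$), so $\EE\int_{Q}\theta^{-2}|y|^{2}\dxt$ is generically infinite and the approximate optimal control problem would be ill-posed. As in Lemma \ref{carle3} and in the paper's proof, the state terms of $J_{\varepsilon}$ must carry the regularized weight $\theta_{\varepsilon}^{-2}$ (bounded up to $t=T$), with $\theta^{-2}$ recovered only in the limit via Fatou's lemma and the monotone convergence $\theta_{\varepsilon}^{-2}\uparrow\theta^{-2}$. With that substitution your argument goes through.
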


\begin{proof}[\bf Proof of  Theorem \ref{control1}:]
Similar to the proof of lemma \ref{carle3}, we also divide the proof into three parts.

{\it Step 1.} Using $\theta_{\varepsilon}$ defined in lemma \ref{carle3}, we introduce the functional
\begin{equation}\nonumber
\begin{split}
	J_{\varepsilon}(u,U)
	:=&\frac{1}{2}\EE \int_{Q}\theta_{\varepsilon}^{-2}|y|^2\dxt
	+\frac{1}{2}\EE \int_{Q}\theta_{\varepsilon}^{-2}\lambda^{-2}\mu^{-2}\xi^{-3}|y_{x}|^2\dxt\\
	&+\frac{1}{2}\EE \int_{Q}\theta_{\varepsilon}^{-2}\lambda^{-4}\mu^{-4}\xi^{-5}|y_{xx}|^2\dxt
	+\frac{1}{2}\EE \intt\int_{G_0}\theta^{-2}\lambda^{-7}\mu^{-8}\xi^{-7}|u|^2\dxt
	\\
	&+\frac{1}{2}\EE \int_{Q}\theta^{-2}\lambda^{-4}\mu^{-4}\xi^{-5}|U|^2\dxt
	+\frac{1}{2\varepsilon}\EE \int_{G}|y(T)|^2\dx
\end{split}	
\end{equation}
and consider the following optimal control problem:
\begin{equation}\label{70}
\begin{split}
\left\{
\begin{aligned}
	&\min_{(u,U)\in \mathscr{U}}J_{\varepsilon}(u,U)\\
	&\text{subject to equation}\ (\ref{equationy3}),
\end{aligned}
\right.
\end{split}
\end{equation}
where
\begin{equation}\nonumber
\begin{split}
	\mathscr{U}=\bigg\{&(u,U)\in L^2_\mathbb{F}(0,T;L^2(G_0))\times L^2_\mathbb{F}(0,T;L^2(G))\ \bigg|\\
	&\EE \intt\int_{G_0}\theta^{-2}\lambda^{-7}\mu^{-8}\xi^{-7}|u|^2\dxt<\infty,\
	\EE \int_{Q}\theta^{-2}\lambda^{-6}\mu^{-6}\xi^{-6}|U|^2\dxt<+\infty \bigg\}.
\end{split}	
\end{equation}
Similar to  \cite{Li1995Optimal}, it is easy to check that for any $\varepsilon>0$, (\ref{70}) admits a unique optimal pair solution that we denote by $(u_{\varepsilon},U_{\varepsilon})$. Moreover, by the standard variational method (see \cite{Li1995Optimal,Lion1971Optimal}), it follows that
\begin{equation}\label{50}
	u_{\varepsilon}=-\chi _{G_0}\theta^{2}\lambda^{7}\mu^{8}\xi^{7}z_{\varepsilon},\quad
	U_{\varepsilon}=-\theta^{2}\lambda^{4}\mu^{4}\xi^{5}Z_{\varepsilon}\quad \text{in}\ Q,\ \mathbb{P}\text{-}a.s.,
\end{equation}
where the pair $(z_{\varepsilon},Z_{\varepsilon})$ verifies the backward stochastic equation
\begin{equation}\label{equationz1}
	\left\{
		\begin{aligned}
	&\dd z_{\varepsilon}-z_{\varepsilon xxxx}\dt
	=-\Xi\dt
	+Z_{\varepsilon}\dd B(t) &\textup{in}\ &Q,\\
    &z_{\varepsilon}(t,0)=z_{\varepsilon}(t,1)=0 &\textup{in}\ &(0,T),\\
    &z_{\varepsilon x}(t,0)=z_{\varepsilon x}(t,1)=0 &\textup{in}\ &(0,T),\\
    &z_{\varepsilon}(T)=\frac{1}{\varepsilon}y_{\varepsilon}(T) &\textup{in}\ &G,
            \end{aligned}
    \right.
\end{equation}
where
$y_{\varepsilon}$ is the solution of
(\ref{equationy3}) with the controls $(u,U)=(u_{\varepsilon},U_{\varepsilon})$
and
\begin{equation}\nonumber
 \Xi=\theta_{\varepsilon}^{-2}y_{\varepsilon}-(\theta_{\varepsilon}^{-2}\lambda^{-2}\mu^{-2}\xi^{-3}y_{\varepsilon x})_x+(\theta_{\varepsilon}^{-2}\lambda^{-4}\mu^{-4}\xi^{-5}y_{\varepsilon xx})_{xx}.
\end{equation}

{\it Step 2.} We now establish a uniform estimate for the optimal solutions $\{(y_{\varepsilon},u_{\varepsilon},U_{\varepsilon})\}_{\varepsilon>0}$.
By It\^o's formula, (\ref{equationy3}) and (\ref{equationz1}), it follows that
\begin{equation}\nonumber
\begin{split}
	&\EE\int_{G}y_{\varepsilon}(T){z_{\varepsilon}}(T)\dx \\
	=& \EE\int_{G}y_{\varepsilon}(0){z_{\varepsilon}}(0)\dx
	+\EE\int_{Q}[-y_{\varepsilon xxxx}+\phi+\chi_{G_0}u_{\varepsilon}]{z_{\varepsilon}}\dxt \\
	&+\EE\int_{Q}[z_{\varepsilon xxxx}
	-\Xi]y_{\varepsilon}\dxt
	+\EE\int_{Q}U_{\varepsilon}{Z_{\varepsilon}}\dxt .
\end{split}
\end{equation}
This, together with (\ref{50}) and the last equality of (\ref{equationz1}), imply that
\begin{equation}\label{51}
\begin{split}
&\EE\intt\int_{G_0}\theta^{2}\lambda^{7}\mu^{8}\xi^{7}|z_{\varepsilon}|^2\dxt
+\EE\int_{Q}\theta^{2}\lambda^{4}\mu^{4}\xi^{5}|Z_{\varepsilon}|^2\dxt
    +\EE\int_{Q}\theta_{\varepsilon}^{-2}|y_{\varepsilon}|^2\dxt\\
    &+\EE\int_{Q}\theta_{\varepsilon}^{-2}\lambda^{-2}\mu^{-2}\xi^{-3}|y_{\varepsilon x}|^2\dxt
    +\EE\int_{Q}\theta_{\varepsilon}^{-2}\lambda^{-4}\mu^{-4}\xi^{-5}|y_{\varepsilon xx}|^2\dxt\\
	&+\frac{1}{\varepsilon}\EE\int_{G}|y_{\varepsilon}(T)|^2\dx
	= \EE\int_{G}y_{0}{z_{\varepsilon}}(0)\dx
	+\EE\int_{Q}\phi{z_{\varepsilon}}\dxt  .
\end{split}
\end{equation}
Noting that $\Xi\in L^{2}_{\mathbb{F}}(0,T;H^{-2}(G))$, applying the Carleman estimate (\ref{carest1}) in Theorem \ref{carle1} to (\ref{equationz1}) , we get
\begin{equation}\label{52}
    \begin{split}
	&\EE\int_{G}\lambda^3\mu^4e^{30\mu m}\theta^2(0)|z_{\varepsilon}(0)|^2\dx
	+\EE\int_{Q}\lambda^3\mu^4\xi^3\theta^2|z_{\varepsilon xx}|^2\dxt
	\\
	&+\EE \int_{Q}\theta^2\lambda^{5}\mu^{6}\xi^{5}|z_{\varepsilon x}|^2\dxt
	+\EE\int_{Q}\lambda^7\mu^8\xi^7\theta^2|z_{\varepsilon}|^2\dxt
	\\
	\leq
	&C\bigg(\EE \int_{Q}\theta_{\varepsilon}^{-2}|y_{\varepsilon}|^2\dxt
	+\EE \int_{Q}\theta_{\varepsilon}^{-2}\lambda^{-2}\mu^{-2}\xi^{-3}|y_{\varepsilon x}|^2\dxt\\
	&+\EE \int_{Q}\theta_{\varepsilon}^{-2}\lambda^{-4}\mu^{-4}\xi^{-5}|y_{\varepsilon xx}|^2\dxt
	+\EE \intt\int_{G_0}\theta^{2}\lambda^{7}\mu^{8}\xi^{7}|z_{\varepsilon}|^2\dxt\\
	&+\EE \int_{Q}\theta^{2}\lambda^{4}\mu^{4}\xi^{5}|Z_{\varepsilon}|^2\dxt\bigg),
	\end{split}
	\end{equation}
where we have used $\theta\theta_{\varepsilon}^{-1}\leq 1$ for any $(t,x)\in Q$.

In view of (\ref{52}), we use the Young inequality on the right-hand side of (\ref{51}) to obtain
\begin{equation}\label{53}
\begin{split}
&\EE\intt\int_{G_0}\theta^{2}\lambda^{7}\mu^{8}\xi^{7}|z_{\varepsilon}|^2\dxt
+\EE\int_{Q}\theta^{2}\lambda^{4}\mu^{4}\xi^{5}|Z_{\varepsilon}|^2\dxt
\\
&+\EE\int_{Q}\theta_{\varepsilon}^{-2}|y_{\varepsilon}|^2\dxt
+\EE\int_{Q}\theta_{\varepsilon}^{-2}\lambda^{-2}\mu^{-2}\xi^{-3}|y_{\varepsilon x}|^2\dxt
\\
&+\EE\int_{Q}\theta_{\varepsilon}^{-2}\lambda^{-4}\mu^{-4}\xi^{-5}|y_{\varepsilon xx}|^2\dxt
+\frac{1}{\varepsilon}\EE\int_{G}|y_{\varepsilon}(T)|^2\dx
\\
\leq \ &\epsilon_{11}\bigg( \EE\int_{G}\lambda^3\mu^4e^{30\mu m}\theta^2(0)|{z_{\varepsilon}}(0)|^2\dx
	+\EE\int_{Q}\theta^{2}\lambda^{7}\mu^{8}\xi^{7}|z_{\varepsilon}|^2\dxt\bigg)\\
	&+C_{\epsilon_{11}}\bigg(\EE\int_{G}\lambda^{-3}\mu^{-4}e^{-30\mu m}\theta^{-2}(0)|y_{0}|^2\dx
	+\EE\int_{Q}\theta^{-2}\lambda^{-7}\mu^{-8}\xi^{-7}|\phi|^2\dxt\bigg),
\end{split}
\end{equation}
for any $\epsilon_{11}>0$.
Using (\ref{52}) and (\ref{53}) with sufficiently small $\epsilon_{11}>0$, we obtain that
\begin{equation}\label{71}
\begin{split}
&\EE\intt\int_{G_0}\theta^{2}\lambda^{7}\mu^{8}\xi^{7}|z_{\varepsilon}|^2\dxt
+\EE\int_{Q}\theta^{2}\lambda^{4}\mu^{4}\xi^{5}|Z_{\varepsilon}|^2\dxt
\\
&+\EE\int_{Q}\theta_{\varepsilon}^{-2}|y_{\varepsilon}|^2\dxt
+\EE\int_{Q}\theta_{\varepsilon}^{-2}\lambda^{-2}\mu^{-2}\xi^{-3}|y_{\varepsilon x}|^2\dxt
\\
&+\EE\int_{Q}\theta_{\varepsilon}^{-2}\lambda^{-4}\mu^{-4}\xi^{-5}|y_{\varepsilon xx}|^2\dxt
+\frac{1}{\varepsilon}\EE\int_{G}|y_{\varepsilon}(T)|^2\dx
\\
\leq \
	&C\bigg(\EE\int_{G}\lambda^{-3}\mu^{-4}e^{-30\mu m}\theta^{-2}(0)|y_{0}|^2\dx
	+\EE\int_{Q}\theta^{-2}\lambda^{-7}\mu^{-8}\xi^{-7}|\phi|^2\dxt\bigg).
\end{split}
\end{equation}
Further, by applying (\ref{50}), (\ref{71}) imply that
\begin{equation}\label{54}
	\begin{split}
&\EE\intt\int_{G_0}\theta^{-2}\lambda^{-7}\mu^{-8}\xi^{-7}|u_{\varepsilon}|^2\dxt
+\EE\int_{Q}\theta^{-2}\lambda^{-4}\mu^{-4}\xi^{-5}|U_{\varepsilon}|^2\dxt
\\
&+\EE\int_{Q}\theta_{\varepsilon}^{-2}|y_{\varepsilon}|^2\dxt
+\EE\int_{Q}\theta_{\varepsilon}^{-2}\lambda^{-2}\mu^{-2}\xi^{-3}|y_{\varepsilon x}|^2\dxt
\\
\end{split}	
\end{equation}
\begin{equation}\nonumber
\begin{split}
&+\EE\int_{Q}\theta_{\varepsilon}^{-2}\lambda^{-4}\mu^{-4}\xi^{-5}|y_{\varepsilon xx}|^2\dxt
+\frac{1}{\varepsilon}\EE\int_{G}|y_{\varepsilon}(T)|^2\dx
\\
\leq \
	&C\bigg(\EE\int_{G}\lambda^{-3}\mu^{-4}e^{-30\mu m}\theta^{-2}(0)|y_{0}|^2\dx
	+\EE\int_{Q}\theta^{-2}\lambda^{-7}\mu^{-8}\xi^{-7}|\phi|^2\dxt\bigg).
\end{split}
\end{equation}

{\it Step 3.} By (\ref{54}),  it is easy to check that there exists $(\hat{u},\hat{U},\hat{y})$ such that
\begin{equation}\label{55}
		\begin{aligned}
	&u_{\varepsilon}\rightarrow \hat{u} &\textup{weakly in}\ &L^2(\Omega\times(0,T);L^2(G_0)),\\
    &U_{\varepsilon}\rightarrow  \hat{U}  &\textup{weakly in}\ &L^2(\Omega\times(0,T);L^2(G),\\
    &y_{\varepsilon}\rightarrow  \hat{y}  &\textup{weakly in}\ &L^2(\Omega\times(0,T);H_{0}^2(G)).
            \end{aligned}
\end{equation}

We claim that $\hat y$ is the solution to (\ref{equationy3}) associated to $(\hat u, \hat U)$. In fact, suppose that $\tilde y$ is the unique solution to (\ref{equationy3}) with controls $(\hat u, \hat U)$.
For any $\Lambda_{2}\in L^2_{\mathbb{F}}(0,T;H^{-2}(G))$, we consider $(\rho,\varrho)$  the unique solution to the backward equation
\begin{equation}\nonumber
	\left\{
		\begin{aligned}
	&\dd \rho-\rho_{xxxx}\dt
	=\Lambda_{2}\dt+\varrho\dd B(t) &\textup{in}\ &Q,\\
    &\rho(t,0)=\rho(t,1)=0 &\textup{in}\ &(0,T),\\
    &\rho_{x}(t,0)=\rho_{x}(t,1)=0 &\textup{in}\ &(0,T),\\
    &\rho (T)=0 &\textup{in}\ &G.
            \end{aligned}
    \right.
\end{equation}
Then, by applying It\^o's formula to calculate $\dd(y_{\varepsilon}\rho-\tilde y\rho)$, we obtain that
\begin{equation}\label{56}
\begin{split}
0=\EE\int_{Q}(y_{\varepsilon}-\tilde y)\Lambda_{2}\dxt
+\EE\intt\int_{G_{0}}(u_{\varepsilon}-\hat u)\rho \dxt
+\EE\int_{Q}(U_{\varepsilon}-\hat U)\varrho \dxt .
\end{split}	
\end{equation}
By taking the limit as $\varepsilon\rightarrow 0$ in (\ref{56}) and using (\ref{55}), we have
\begin{equation}\nonumber
\EE\int_{Q}(\hat y-\tilde y)\Lambda_{2}\dxt =0.
\end{equation}
Since $\Lambda_{2}\in L^2_{\mathbb{F}}(0,T;H^{-2}(G))$ is arbitrary, we deduce that $\hat y=\tilde y$ in $Q$, $\mathbb{P}$-a.s.

To conclude, we notice from (\ref{54}) that $\hat y(T)=0$ in $G$, $\mathbb{P}$-a.s. Moreover, from the weak convergence (\ref{55}), the uniform estimate (\ref{54}) and Fatou's lemma, we deduce (\ref{control1est}). Thus, the proof of Theorem \ref{control1} is complete.
\end{proof}
Based on the above results, let us prove the Theorem \ref{control}.
\begin{proof}[\bf Proof of Theorem \ref{control}:]
According to Theorem \ref{control1}, for any given $\phi\in L^2_{\mathbb{F}}(0,T;L^2(G))$, we know that there exists a control pair $(u,U)\in L^2_\mathbb{F}(0,T;L^2(G_0))\times L^2_\mathbb{F}(0,T;L^2(G))$, such that the corresponding solution $y\in \mathcal{H}_{T}$ to the controlled system (\ref{equationy3})  satisfies $y(T)=0$ in $G$, $\mathbb{P}$-a.s.
Hence, let us consider a nonlinearity $f$ satisfying assumptions $(A_{1})$, $(A_{2})$ and $(A_{3})$ and define the nonlinear map,
\begin{equation}\nonumber
	\mathscr{E}: \phi\in {\mathscr{S}_{\lambda,\mu}}\mapsto f(w,t,x,y,y_{x},y_{xx})\in {\mathscr{S}_{\lambda,\mu}},
\end{equation}
where $y$ is the trajectory of (\ref{equationy3}) associated to the data $y_{0}$ and $\phi$.
In the following, to simplify the notation, we simply write $f(y,y_{x},y_{xx})$.

Next, we will show that $\mathscr{E}$ is a contraction mapping from ${\mathscr{S}_{\lambda,\mu}}$ into ${\mathscr{S}_{\lambda,\mu}}$.
First, we check that the mapping $\mathscr{E}$ is well defined. In fact, for any $\phi\in {\mathscr{S}_{\lambda,\mu}}$, using $(A_{1})$-$(A_{3})$ and $(\ref{control1est})$, we have
\begin{equation}\nonumber
\begin{split}
\|\mathscr{E}\phi\|_{\mathscr{S}_{\lambda,\mu}}^2
=&\EE \int_{Q}\theta^{-2}\lambda^{-7}\mu^{-8}\xi^{-7}|f(y,y_{x},y_{xx})|^2\dxt \\
\leq &3\kappa^2 \EE \int_{Q}\theta^{-2}\lambda^{-7}\mu^{-8}\xi^{-7}(|y|^2+|y_{x}|^2+|y_{xx}|^2)\dxt \\
\leq
&3\kappa^2(\lambda^{-7}\mu^{-8}+\lambda^{-5}\mu^{-6}+\lambda^{-3}\mu^{-4})
\bigg(\EE\int_{Q}\theta^{-2}|y|^2\dxt
\\
&+\EE\int_{Q}\theta^{-2}\lambda^{-2}\mu^{-2}\xi^{-3}|y_{x}|^2\dxt
+\EE\int_{Q}\theta^{-2}\lambda^{-4}\mu^{-4}\xi^{-5}|y_{xx}|^2\dxt\bigg)
\\
\leq
&C\lambda^{-3}\mu^{-4}
\bigg(\EE\int_{G}\lambda^{-3}\mu^{-4}e^{-30\mu m}\theta^{-2}(0)|y_{0}|^2\dx
	+\|\phi\|^2_{\mathscr{S}_{\lambda,\mu}}\bigg)
\\
\leq &\lambda^{-3}\mu^{-4}\bigg(C_{1}\EE\|y_{0}\|^2_{L^2(G)}	+C\|\phi\|^2_{\mathscr{S}_{\lambda,\mu}}\bigg)
< \infty,
\end{split}	
\end{equation}
for any sufficiently large parameters $\lambda\geq\lambda_{0}$ and $\mu\geq\mu_{0}$, where $C_{1}>0$ depends on $G$,  $G_{0}$, $\lambda$ and $\mu$, and $C > 0$ only depends on $G$ and $G_{0}$.
This proves that $\mathscr{E}$ is well defined.

Next, we check that the mapping $\mathscr{E}$ is strictly contractive.
Let $y_{1}$, $y_{2}$ be the solutions of the controlled system (\ref{equationy3}) with respect to the source terms $\phi^{(1)}$, $\phi^{(2)}\in {\mathscr{S}_{\lambda,\mu}}$, respectively.
Then applying (\ref{control1est}) in Theorem \ref{control1} for the equation associated to  $\phi=\phi^{(1)}-\phi^{(2)}$, $y(0)=y_{0}-y_{0}=0$, and using assumption $(A_{3})$, we have
\begin{equation}\nonumber
\begin{split}
	\|\mathscr{E}\phi^{(1)}-\mathscr{E}\phi^{(2)}\|_{\mathscr{S}_{\lambda,\mu}}
	=&\EE \int_{Q}\theta^{-2}\lambda^{-7}\mu^{-8}\xi^{-7}|f(y_{1},y_{1x},y_{1xx})-f(y_{2},y_{2x},y_{2xx})|^2\dxt \\
	\leq &3\kappa^2 \EE \int_{Q}\theta^{-2}\lambda^{-7}\mu^{-8}\xi^{-7}(|y_{1}-y_{2}|^2+|y_{1x}-y_{2x}|^2+|y_{1xx}-y_{2xx}|^2)\dxt \\
	 \leq &C\kappa^2\lambda^{-3}\mu^{-4}\|\phi^{(1)}-\phi^{(2)}\|^2_{\mathscr{S}_{\lambda,\mu}},
\end{split}	
\end{equation}
for any sufficiently large parameters $\lambda\geq\lambda_{0}$ and $\mu\geq\mu_{0}$, where $C > 0$ only depends on $G$ and $G_{0}$. Thus, if necessary, we can increase the value of $\lambda$ and $\mu$ such that $C\kappa^2\lambda^{-3}\mu^{-4}<1$, which implies that the mapping $\mathscr{E}$ is strictly contractive.

Further, by the Banach fixed point theorem, it follows that $\mathscr{E}$ has a unique fixed point $\tilde \phi\in {\mathscr{S}_{\lambda,\mu}}$. Moreover, it holds that $\tilde \phi=f(\omega,t,x,y,y_{x},y_{xx})$, where $y$ is the solution for the equation (\ref{equationy3}) associated to $\phi=\tilde \phi$, which means that $y$ is the solution to equation (\ref{equationy2}).
Applying Theorem \ref{control1}, we know that there exists a control pair $(u,U)\in L^2_\mathbb{F}(0,T;L^2(G_0))\times L^2_\mathbb{F}(0,T;L^2(G))$, such that $y$ to the controlled system (\ref{equationy2})  satisfies $y(T)=0$ in $G$, $\mathbb{P}$-a.s.
According to the Remark \ref{remark1},
the proof of Theorem \ref{control} is complete.
\end{proof}

\section*{Appendix A}
In Appendix A, we give a brief proof for Remark \ref{remark01}.
\begin{proof}
We re-estimate of the boundary terms $\mathcal{J}_{4}$ in Theorem \ref{carle2} with the new boundary condition
\begin{equation}\nonumber
\left\{
		\begin{aligned}
    &r(t,0)=r(t,1)=0 &\textup{in}\ &(0,T),\\
    &r_{xx}(t,0)=r_{xx}(t,1)=0 &\textup{in}\ &(0,T).
         \end{aligned}
    \right.
\end{equation}
In this condition, we can easily check that
\begin{equation}\label{app1}\tag{A.1}
\begin{split}
&w(0,t)=w(1,t)=0,
\\
&w_{x}(0,t)=\theta r_{x}(0,t),\quad
w_{x}(1,t)=\theta r_{x}(1,t),
\\
&w_{xx}(0,t)=2\theta_{x} r_{x}(0,t)\quad
w_{xx}(1,t)=2\theta_{x} r_{x}(1,t),
\quad t\in (0,T).
\end{split}
\end{equation}

By the definition of ${\mathcal{B}}$, using (\ref{4}) and Young inequality, we obtain that
\begin{equation}\label{app2}\tag{A.2}
\begin{split}
\mathcal{J}_{4}
:=\ &\EE\int_{Q}\mathcal{B}\dx
\\
\geq\ &2\EE\int_{Q}(w_{xx}\dd w_{x})_{x}\dx
\\
&-\frac{3}{10}\EE\intt
 \lambda\mu\xi(1)\beta_{x}(1) |w_{xxx}(1)|^2\dt
 +\frac{3}{10}\EE\intt
 \lambda\mu\xi(0)\beta_{x}(0) |w_{xxx}(0)|^2\dt
 \\
 &-20\EE\intt \lambda^3\mu^3\xi^3(1)\beta_{x}^3(1)|w_{xx}(1)|^2 \dt
+20\EE\intt \lambda^3\mu^3\xi^3(0)\beta_{x}^3(0)|w_{xx}(0)|^2 \dt
\\
&-2\EE\intt \lambda^5\mu^5\xi^5(1)\beta_{x}^5(1)|w_{x}(1)|^2 \dt
+2\EE\intt \lambda^5\mu^5\xi^5(0)\beta_{x}^5(0)|w_{x}(0)|^2 \dt
\\
&-C\bigg[\EE\intt
 \mu |w_{xxx}(1)|^2\dt
+\EE\intt \lambda^2\mu^3\xi^2(1)|w_{xx}(1)|^2 \dt
\\
&+\EE\intt
 \mu |w_{xxx}(0)|^2\dt
+\EE\intt \lambda^2\mu^3\xi^2(0)|w_{xx}(0)|^2 \dt
\\
&+\EE\intt
 \lambda^4\mu^5\xi^4(0) |w_{x}(0)|^2\dt
+\EE\intt \lambda^4\mu^5\xi^4(0)|w_{x}(1)|^2 \dt
\bigg].
\end{split}	
\end{equation}
Further, by (\ref{app1}) and $w=\theta r$, we have
\begin{equation}\label{app3}\tag{A.3}
\begin{split}
w_{xx}\dd w_{x}
&=2\theta_{x}r_{x}\dd w_{x}
=2\theta_{x}\theta^{-1}w_{x}\dd w_{x}
=2\lambda\mu\xi\beta_{x}w_{x}\dd w_{x}
\\
&=\dd(\lambda\mu\xi\beta_{x}|w_{x}|^2)
-\lambda\mu\xi_{t}\beta_{x}|w_{x}|^2\dt
-\lambda\mu\xi\beta_{x}|\dd w_{x}|^2,
\quad (t,x)\in (0,T)\times \{0,1\}.
\end{split}
\end{equation}
Then, we have from (\ref{app3}) that
\begin{equation}\label{app4}\tag{A.4}
\begin{split}
&\EE\int_{Q}(w_{xx}\dd w_{x})_{x}\dx
\\
=\ &\EE\intt \dd(\lambda\mu\xi\beta_{x}|w_{x}|^2)\big|_{x=1}\dt
-\EE\intt \dd(\lambda\mu\xi\beta_{x}|w_{x}|^2)\big|_{x=0}\dt
\\
&-\EE\intt \lambda\mu\xi\beta_{x}|\dd w_{x}|^2\big|_{x=1}\dt
+\EE\intt \lambda\mu\xi\beta_{x}|\dd w_{x}|^2\big|_{x=0}\dt
\\
&-\EE\intt \lambda\mu\xi_{t}\beta_{x}|w_{x}|^2\big|_{x=1}\dt
+\EE\intt \lambda\mu\xi_{t}\beta_{x}|w_{x}|^2\big|_{x=0}\dt .
\end{split}	
\end{equation}
Using the fact that $\beta_{x}(1)<0$ and $\beta_{x}(0)>0$, we know that the first four terms on the right side of (\ref{app4}) are positive terms.
In addition, by the definition of $\gamma$ in (\ref{gamma}), we can get
\begin{equation}\label{app5}\tag{A.5}
\begin{split}
&\EE\int_{Q}(w_{xx}\dd w_{x})_{x}\dx
\geq -C\bigg[
\EE\intt
 \lambda^4\mu^5\xi^4(0) |w_{x}(0)|^2\dt
+\EE\intt \lambda^4\mu^5\xi^4(0)|w_{x}(1)|^2 \dt
\bigg].
\end{split}	
\end{equation}
Further, By virtue of (\ref{app2}), (\ref{app5}), $\beta_{x}(1)<0$, $\beta_{x}(0)>0$ and (\ref{beta}), we obtain that
\begin{equation}\nonumber
\begin{split}
\mathcal{J}_{4}
\geq\
&\frac{3}{10}\alpha_{0} \EE\intt
 \lambda\mu\xi(1) |w_{xxx}(1)|^2\dt
 +\frac{3}{10}\alpha_{0}\EE\intt
 \lambda\mu\xi(0) |w_{xxx}(0)|^2\dt
 \\
 &+20\alpha_{0}^3\EE\intt \lambda^3\mu^3\xi^3(1)|w_{xx}(1)|^2 \dt
+20\alpha_{0}^3\EE\intt \lambda^3\mu^3\xi^3(0)|w_{xx}(0)|^2 \dt
\\
&+2\alpha_{0}^5\EE\intt \lambda^5\mu^5\xi^5(1)|w_{x}(1)|^2 \dt
+2\alpha_{0}^5\EE\intt \lambda^5\mu^5\xi^5(0)|w_{x}(0)|^2 \dt
\\
&-C\bigg[\EE\intt
 \mu |w_{xxx}(1)|^2\dt
+\EE\intt \lambda^2\mu^3\xi^2(1)|w_{xx}(1)|^2 \dt
\\
&+\EE\intt
 \mu |w_{xxx}(0)|^2\dt
+\EE\intt \lambda^2\mu^3\xi^2(0)|w_{xx}(0)|^2 \dt
\\
&+\EE\intt
 \lambda^4\mu^5\xi^4(0) |w_{x}(0)|^2\dt
+\EE\intt \lambda^4\mu^5\xi^4(0)|w_{x}(1)|^2 \dt
\bigg].
\end{split}	
\end{equation}
The other part of the proof is similar to the proof of Theorem \ref{carle2}.
\end{proof}

\section*{Acknowledgments}

The research of HG was supported in part by National Key R\&D Program of China (No. 2023YFA1009002).
The research of GY was supported in part by NSFC (No. 11771074 and No.12371421) and National Key R\&D Program of China (No. 2021YFA1003400 and No. 2020YFA0714102).

\end{document}